\numberwithin{equation}{section}
\numberwithin{figure}{section}
 \theoremstyle{definition}
 \newtheorem*{defn*}{\protect\definitionname}
\theoremstyle{plain}
\newtheorem{thm}{\protect\theoremname}
  \theoremstyle{plain}
  \newtheorem{prop}[thm]{\protect\propositionname}
  \theoremstyle{plain}
  \newtheorem{lem}[thm]{\protect\lemmaname}
  \theoremstyle{plain}
  \newtheorem{cor}[thm]{\protect\corollaryname}
  \theoremstyle{plain}
  \newtheorem*{conjecture*}{\protect\conjecturename}
 \newtheorem*{thm*}{\protect\theoremname}
  \providecommand{\conjecturename}{Conjecture}
  \providecommand{\corollaryname}{Corollary}
  \providecommand{\definitionname}{Definition}
  \providecommand{\lemmaname}{Lemma}
  \providecommand{\propositionname}{Proposition}
\providecommand{\theoremname}{Theorem}
\begin{document}

\title{Foliations on unitary Shimura varieties in positive characteristic}

\author{Ehud de Shalit and Eyal Z. Goren}

\date{July 9, 2017}

\keywords{Shimura varieties, Ekedahl-Oort strata, Foliations}

\subjclass[2000]{11G18, 14G35}

\address{Ehud de Shalit, Hebrew University of Jerusalem, Israel}

\address{ehud.deshalit@mail.huji.ac.il}

\address{Eyal Z. Goren, McGill University, Montr\'eal, Qu\'ebec, Canada}

\address{eyal.goren@mcgill.ca}

\global\long\def\End{\mathbf{\mathrm{End}}}

\global\long\def\Hom{\mathrm{Hom}}

\global\long\def\div{\mathrm{div}}

\global\long\def\Lie{\mathrm{Lie}}

\global\long\def\ord{\mathrm{ord}}

\global\long\def\no{\mathrm{no}}

\global\long\def\fol{\mathrm{fol}}

\global\long\def\Fr{\mathrm{Fr}}

\global\long\def\Ver{\mathrm{Ver}}

\global\long\def\Spec{\mathrm{Spec}}

\global\long\def\rk{\mathrm{rk}}
\begin{abstract}
When $p$ is inert in the quadratic imaginary field $E$ and $m<n$,
unitary Shimura varieties of signature $(n,m)$ and a hyperspecial
level subgroup at $p$, carry a natural \emph{foliation} \emph{of
height }1 and rank $m^{2}$ in the tangent bundle of their special
fiber $S$. We study this foliation and show that it acquires singularities
at deep Ekedahl-Oort strata, but that these singularities are resolved
if we pass to a natural smooth moduli problem $S^{\sharp}$, a successive
blow-up of $S.$ Over the ($\mu$-)ordinary locus we relate the foliation
to Moonen's generalized Serre-Tate coordinates. We study the quotient
of $S^{\sharp}$ by the foliation, and identify it as the Zariski
closure of the ordinary-\'etale locus in the special fiber $S_{0}(p)$
of a certain Shimura variety with parahoric level structure at $p$.
As a result, we get that this ``horizontal component'' of $S_{0}(p)$,
as well as its multiplicative counterpart, are non-singular (formerly
they were only known to be normal and Cohen-Macaulay). We study two
kinds of integral manifolds of the foliation: unitary Shimura subvarieties
of signature $(m,m)$, and a certain Ekedahl-Oort stratum that we
denote $S_{\fol}.$ We conjecture that these are the only integral
submanifolds.
\end{abstract}
\maketitle

\tableofcontents{}

\section{Introduction}

Inseparable morphisms in characteristic $p$ have long become an important
tool for obtaining deep results in algebraic geometry. The striking
difference between algebraic differential geometry in characteristic
0 and in characteristic $p$ is the source of many important applications.
We cite the proof by Rudakov and Shafarevich \cite{=00005BRu-Sh=00005D}
of the non-existence of global vector fields on a K3 surface, and
the characteristic $p$ proof by Deligne and Illusie \cite{=00005BDe-Ill=00005D}
of the degeneration of the Hodge spectral sequence, as two outstanding
examples.

The present paper originated from observations made in a special case
in \cite{=00005BdS-G2=00005D}. Its goal is to study the geometry
of unitary Shimura varieties modulo $p$, making use of the relation
between inseparable morphisms of height 1 and height 1 foliations
in the tangent bundle. This relation can be traced back, at the birational
level, to Jacobson's ``Galois theory'' for inseparable field extensions
by means of derivations \cite{=00005BJa=00005D}. It was further developed
by Rudakov and Shafarevich in \cite{=00005BRu-Sh=00005D}, and by
Ekedahl \cite{=00005BEk=00005D} and Miyaoka \cite{=00005BMi=00005D}.
The latter extended the notion of a foliation from first-order foliations
to any order, to deal with the problem of non-uniqueness of solutions
of linear differential equations in characteristic $p.$

\medskip{}

To explain our main results, let $E$ be a quadratic imaginary field
and $p$ an odd prime which is inert in $E$. Let $S_{K}$ be a Shimura
variety associated with a unitary group $\boldsymbol{G}_{/\mathbb{Q}}$,
split by $E,$ of signature $(n,m)$ ($0<m<n),$ and with an ad\`elic
level subgroup $K\subset\boldsymbol{G}(\mathbb{A})$. Then $S_{K}$
is a smooth quasi-projective variety of dimension $nm$ over $E$
(the reflex field). We assume that $K=K_{\infty}K_{p}K^{p}$ where
$K_{p}$ is hyperspecial maximal compact at $p$. Under this assumption,
Kottwitz \cite{=00005BKo=00005D} has defined a smooth integral model
$\mathcal{S}$ for $S_{K}$ over $\mathcal{O}_{E,(p)}$, whose special
fiber over the residue field $\kappa$ we denote simply by $S$. This
integral model $\mathcal{S}$ is a moduli space for certain $n+m$
dimensional abelian schemes with PEL structure (the endomorphisms
coming from $\mathcal{O}_{E}$) and we let $\mathcal{A}$ denote the
universal abelian scheme over it.

The special fiber $S$ admits a stratification by the isomorphism
type of the $p$-torsion of the abelian varieties making up the family
$\mathcal{A}.$ This is the Ekedahl-Oort (EO) stratification \cite{=00005BOo=00005D},
\cite{=00005BMo2=00005D}, \cite{=00005BWe=00005D}, see also \cite{=00005BV-W=00005D}.
It has a unique open dense stratum $S^{\ord},$ which coincides with
the unique open dense stratum in the Newton polygon (NP) stratification.
Under our assumption that $p$ is inert and $m<n$ the abelian varieties
parametrized by $S^{\ord}$ are not ordinary, but only $\mu$-ordinary.
This means, roughly speaking, that they are ``as ordinary as the
PEL data permits them to be''. This observation is where our journey
begins.

When $p$ is split, or when $p$ is inert but $n=m,$ $S^{\ord}$
classifies ordinary abelian varieties, in the usual sense. As Serre
and Tate have shown more than 50 years ago, a formal neighborhood
of a point $x\in S^{\ord}$ then carries a canonical structure of
a formal torus, hence $S^{\ord}$ is locally isotropic, i.e. ``looks
the same in all directions''. When $p$ is inert and $m<n$ this
breaks down. In fact, Moonen has introduced in \cite{=00005BMo1=00005D}
``generalized Serre-Tate coordinates'' on $S^{\ord}$, and showed
that under our assumptions, a formal neighborhood of a point $x\in S^{\ord}$
has a canonical structure as the part fixed under an involution in
a ``3-cascade''. Vasiu \cite{=00005BVa=00005D} has also done related
work, that we will not need to use in this paper. As a result of Moonen's
work, a canonical $m^{2}$-dimensional subspace $\mathcal{T}S_{x}^{+}$
is singled out in the tangent space $\mathcal{T}S_{x}$, and these
subspaces make up a sub-bundle $\mathcal{T}S^{+}$ over $S^{\ord},$
which is in fact a $p$-Lie sub-algebra, i.e. a height 1 foliation.

\medskip{}

One can define $\mathcal{T}S^{+}$ without any appeal to the results
of Moonen as follows. The Hodge bundle $\omega_{\mathcal{A}/S}=R^{0}\pi_{*}\Omega_{\mathcal{A}/S}^{1}$
breaks up as a direct sum $\mathcal{P}\oplus\mathcal{Q}$ according
to types, where $\mathcal{P}$, the part on which the endomorphisms
act via the natural map $\mathcal{O}_{E}\hookrightarrow\mathcal{O}_{E,(p)}\twoheadrightarrow\kappa$,
is of rank $n,$ and $\mathcal{Q},$ the part on which they act via
the Galois conjugate map, is of rank $m.$ The Kodaira-Spencer map
supplies us with an isomorphism
\[
KS:\mathcal{P}\otimes\mathcal{Q}\simeq\Omega_{S}^{1}.
\]

Once we situate ourselves in the special fiber, we can make use of
the Verschiebung isogeny $\Ver:\mathcal{A}^{(p)}\to\mathcal{A},$
and we denote by $V$ the map it induces on de~Rham cohomology. We
denote by $\mathcal{P}[V]$ the subsheaf which is the kernel of $V|_{\mathcal{P}}.$
Over $S^{\ord}$ it constitutes a sub-bundle $\mathcal{P}_{0}$ of
rank $n-m.$ We let
\[
\mathcal{T}S^{+}=KS(\mathcal{P}_{0}\otimes\mathcal{Q})^{\perp}
\]
be the annihilator of $KS(\mathcal{P}_{0}\otimes\mathcal{Q})$ in
$\mathcal{T}S$ under the natural pairing between the tangent and
cotangent bundles. The proof that this sub-bundle is in fact a foliation
(closed under Lie bracket and raising to power $p$) becomes a pleasant
exercise involving the Gauss-Manin connection and the notion of $p$-curvature
(see Proposition~\ref{prop:Being a foliation}). The foliation $\mathcal{T}S^{+}$
is the one appearing in the title of our paper. In Theorem~\ref{thm:compatibility with Moonen}
we prove that it coincides with what might be obtained from \cite{=00005BMo1=00005D}.

Besides its simplicity, our definition of the foliation has two other
advantages. First, according to the dictionary between foliations
of height 1 and inseparable morphisms of height 1, reviewed in \S\ref{subsec:Foliations-and-inseparable},
$\mathcal{T}S^{+}$ corresponds to a certain quotient variety of $S^{\ord}.$
In \S\ref{subsec:S_0(p) and the foliation} we identify this variety
as a Zariski open subset (\emph{the ordinary-\'etale locus}) in the
special fiber of an integral model of a certain Shimura variety of
parahoric level at $p.$ We call this Shimura variety $S_{K_{0}(p)}.$
In characteristic 0 it is a finite \'etale covering of $S_{K}.$ Its
integral model over $\mathcal{O}_{E,(p)}$, denoted $\mathcal{S}_{0}(p)$,
was defined by Rapoport and Zink in Chapter 6 of \cite{=00005BRa-Zi=00005D}
and studied further by several authors. For instance, G\"ortz \cite{=00005BG=0000F6=00005D}
proved that it is flat over $\mathcal{O}_{E,(p)},$ and that if we
denote its special fiber by $S_{0}(p),$ the local rings of the irreducible
components of $S_{0}(p)$ are Cohen-Macaulay and normal. See also
the work of Pappas and Zhu \cite{=00005BP-Z=00005D}. We make strong
use of these results later on. The special fiber $S_{0}(p)$ classifies
abelian schemes in characteristic $p,$ with PEL structure as in $S,$
equipped with a finite flat, isotropic, $\mathcal{O}_{E}$-stable
``Raynaud'' subgroup scheme $H$ of rank $p^{2m}.$ The ordinary-\'etale
locus of $S_{0}(p)$, denoted $S_{0}(p)_{et}^{\ord},$ is the open
subset lying over $S^{\ord}$ classifying such objects in which $H$
is \'etale. The alleged quotient map $S^{\ord}\to S_{0}(p)_{et}^{\ord}$
is such that when we compose it with the natural projection $S_{0}(p)_{et}^{\ord}\to S^{\ord}$,
in any order, we get the map $Fr_{p}^{2}$.

More important, perhaps, is that we are able to extend $\mathcal{T}S^{+}$
into the deeper EO strata, something absent from Moonen's theory of
generalized Serre-Tate coordinates. This is tied up with the study
of the closure $S_{0}(p)_{et}$ of the ordinary-\'etale locus $S_{0}(p)_{et}^{\ord},$
one of the horizontal relatively irreducible\footnote{A \emph{relatively irreducible }component $Y$ is a union of irreducible
components of $S_{0}(p)$ for which the projection $\pi:S_{0}(p)\to S$
induces a bijection between the irreducible components of $Y$ and
those of $\pi(Y).$ We call a relatively irreducible component \emph{horizontal}
if an open subset of it maps finite-flat to $S.$ The special fiber
has non-horizontal components too\emph{.}} components of $S_{0}(p).$ It is also tied up with a certain moduli-scheme
$S^{\sharp},$ special to characteristic $p,$ which is a ``successive
blow-up'' of $S$ at deep enough EO strata. There does not seem to be a natural way to lift
$S^{\sharp}$  to characteristic 0, and in any case the moduli problem which it represents lives in characteristic $p$ only.
In \S\ref{subsec:The-moduli-scheme S=000023}
we define $S^{\sharp}$ as a moduli problem and prove that it is representable
by a smooth scheme over $\kappa.$ We also determine the dimensions
of the fibers of the morphism $f:S^{\sharp}\to S,$ and the open set
$S_{\sharp}\subset S$ over which $f$ is an isomorphism. The set
$S_{\sharp}$ ``interpolates'' between $S^{\ord}$ and a unique
minimal EO stratum, of dimension $m^{2}$, contained in it, which
we call $S_{\fol}$. All this information is described in terms of
the combinatorics of $(n,m)$-shuffles in the symmetric group $\mathfrak{S}_{n+m}.$ 

The height 1 foliation $\mathcal{T}S^{+}$ extends canonically to
a height 1 foliation $\mathcal{T}S^{\sharp+}$ on $S^{\sharp}.$ Over
$S_{\sharp}$ it can be considered to lie in $S,$ but outside $S_{\sharp}$
it would acquire singularities, and it is necessary to introduce the
successive blow up $S^{\sharp}$ to extend it everywhere.

Having constructed $S_{0}(p)_{et}$ and $S^{\sharp}$, we describe
purely inseparable morphisms
\[
S_{0}(p)_{et}\overset{\pi_{et}^{\sharp}}{\to}S^{\sharp}\overset{\rho}{\to}S_{0}(p)_{et}
\]
whose composition is $Fr_{p}^{2}$. The map $\pi_{et}^{\sharp}$ extends
the natural projection $\pi_{et}$ from $S_{0}(p)_{et}^{\ord}$ to
$S^{\ord},$ and $\rho$ extends the quotient map $S^{\ord}\to S_{0}(p)_{et}^{\ord}$
obtained from the foliation $\mathcal{T}S^{+}.$ Using the theorem
of G\"ortz mentioned above we deduce that since $S_{0}(p)_{et}$ is
Cohen-Macaulay, $\pi_{et}^{\sharp}$ is finite and flat. Using the
normality of $S_{0}(p)_{et}$ we conclude that $\rho$ is also finite
and flat, and that $S_{0}(p)_{et}$ is in fact \emph{non-singular.}

At the other extreme we have the multiplicative horizontal component
$S_{0}(p)_{m},$ the closure of the ordinary-multiplicative locus
$S_{0}(p)_{m}^{\ord}$. It too maps to $S^{\sharp}$ and this map
is in fact an isomorphism, proving that $S_{0}(p)_{m}$ is non-singular.
Note that the projection from $S_{0}(p)_{m}$ to $S$ is \emph{not}
everywhere finite, and does not admit a section.

We do not know if similar results hold for the other, ``mixed''
horizontal components of $S_{0}(p),$ of which there are many in general.
We also stress that although $S_{0}(p)_{m}$ and $S_{0}(p)_{et}$
intersect, the maps that we have constructed from them to $S^{\sharp}$
do not agree on the intersection. This is manifested already in signature
$(2,1),$ see \cite{=00005BdS-G2=00005D}. 

We gather the above results in the following theorem. For more details see
Propositions \ref{prop:pi_m^sharp} and \ref {prop:extension_of_foliation}, and
Theorem \ref{thm:smoothness_S_0(p)_et}.

\begin{thm*}
(i) The foliation $\mathcal{T}S^{+}$ extends uniquely to a rank $m^{2}$ foliation $\mathcal{T}S^{\sharp +}$ on $S^{\sharp}.$

(ii) The quotient of $S^{\sharp}$ by $\mathcal{T}S^{\sharp +}$ is $S_{0}(p)_{et}$, which is non-singular.

(iii) $S_{0}(p)_m$ is isomorphic to $S^{\sharp}.$
\end{thm*}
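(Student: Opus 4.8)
The plan is to prove the three parts essentially in the order they are stated, since (i) and (ii) feed into the proof of (iii). For part (i), I would start from the fact, established earlier (Proposition~\ref{prop:Being a foliation}), that $\mathcal{T}S^{+}=KS(\mathcal{P}_{0}\otimes\mathcal{Q})^{\perp}$ is a height~$1$ foliation of rank $m^{2}$ on $S^{\ord}$, and the fact that the bad locus where $\mathcal{P}_{0}=\mathcal{P}[V]$ fails to be a sub-bundle is exactly resolved on $S^{\sharp}$ by construction (the successive blow-up makes $\mathcal{P}[V]$ into a sub-bundle, or rather its pullback admits a canonical rank-$(n-m)$ saturation). On $S^{\sharp}$ one can therefore form $KS(\mathcal{P}_{0}\otimes\mathcal{Q})$ with $\mathcal{P}_{0}$ now locally free of rank $n-m$, pull back the Kodaira--Spencer isomorphism (it extends since $f:S^{\sharp}\to S$ is a birational morphism of smooth $\kappa$-schemes and the Hodge bundles pull back), and set $\mathcal{T}S^{\sharp +}$ to be the annihilator. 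Closedness under Lie bracket and $p$-th power is checked exactly as in the proof on $S^{\ord}$, using the Gauss--Manin connection and $p$-curvature; these computations are local and the open dense locus $S_{\sharp}$ where $f$ is an isomorphism is dense in $S^{\sharp}$, so the identities, being closed conditions on a smooth scheme, propagate. Uniqueness of the extension is automatic: two rank $m^{2}$ sub-bundles of $\mathcal{T}S^{\sharp}$ agreeing on a dense open of a reduced scheme coincide.

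For part (ii), I would invoke the dictionary between height~$1$ foliations and purely inseparable degree-$p^{\dim}$ quotients (reviewed in \S\ref{subsec:Foliations-and-inseparable}): the quotient $S^{\sharp}/\mathcal{T}S^{\sharp +}$ exists as a normal (indeed here smooth, see below) scheme $Z$ with $S^{\sharp}\to Z$ finite flat of degree $p^{m^{2}}$ and $Z^{(p)}\cong S^{\sharp}/(\text{relative Frobenius})$ factoring the Frobenius. Over $S^{\ord}$ this quotient is, by \S\ref{subsec:S_0(p) and the foliation}, the ordinary-\'etale locus $S_{0}(p)_{et}^{\ord}$, and the two maps $\pi_{et}^{\sharp}$ and $\rho$ compose to $Fr_{p}^{2}$ there. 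I would then extend $\rho:S^{\sharp}\to S_{0}(p)_{et}$ using that $S_{0}(p)_{et}$ is the Zariski closure and $S^{\ord}$ is dense, and extend $\pi_{et}^{\sharp}:S_{0}(p)_{et}\to S^{\sharp}$ by the same density argument (both target schemes are separated). Now the key input is G\"ortz's theorem that $S_{0}(p)$ has Cohen--Macaulay and normal local rings along its irreducible components: Cohen--Macaulayness of $S_{0}(p)_{et}$ plus the fact that $\pi_{et}^{\sharp}$ is quasi-finite with smooth target $S^{\sharp}$ of the same dimension gives, by miracle flatness, that $\pi_{et}^{\sharp}$ is finite flat; then $\rho\circ\pi_{et}^{\sharp}=Fr_{p}^{2}$ is finite flat of degree $p^{2\cdot m^{2}+2(n-m)m}$ — wait, of degree $p^{2nm}$ on an $nm$-dimensional scheme — and dividing, $\rho$ is finite flat. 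Finally, $S^{\sharp}\to Z=S_{0}(p)_{et}$ being finite flat with $S^{\sharp}$ smooth forces $Z$ to be, at worst, normal; combined with $\pi_{et}^{\sharp}:Z\to S^{\sharp}$ finite flat onto a smooth scheme and $Z$ Cohen--Macaulay, one gets that $\pi_{et}^{\sharp}$ is faithfully flat, hence $\mathcal{O}_{Z}\hookrightarrow (\pi_{et}^{\sharp})_{*}\mathcal{O}_{S^{\sharp}}$ is a split injection of $\mathcal{O}_{Z}$-modules, and since the generic fiber is \'etale and $Z$ is normal, a standard purity/reflexivity argument promotes normality of $Z$ to regularity. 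I expect this last smoothness conclusion to be the main obstacle, and the cleanest route is probably to exhibit $S_{0}(p)_{et}$ \'etale-locally as a quotient of the smooth $S^{\sharp}$ by the foliation and verify smoothness of such a quotient directly, or to argue via the section $\pi_{et}^{\sharp}$ of $\rho$.

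For part (iii), the multiplicative horizontal component $S_{0}(p)_{m}$ likewise maps to $S^{\sharp}$, and over the ordinary locus $S_{0}(p)_{m}^{\ord}\to S^{\ord}$ is already known (from the signature $(2,1)$ picture in \cite{=00005BdS-G2=00005D}, generalized) to be an \emph{isomorphism} — the multiplicative Raynaud subgroup $H$ is determined by and determines the point, there is no inseparability. I would extend this to a morphism $S_{0}(p)_{m}\to S^{\sharp}$ between the closures by density and separatedness, show it is bijective on points (using that on $S^{\sharp}$ the sub-bundle $\mathcal{P}_{0}$ canonically splits off a multiplicative subgroup scheme, which recovers $H$), and then conclude it is an isomorphism: it is a bijective morphism of $\kappa$-schemes, $S^{\sharp}$ is smooth hence normal, $S_{0}(p)_{m}$ is normal by G\"ortz, so a birational bijective morphism between normal schemes is an isomorphism by Zariski's main theorem. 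The non-singularity of $S_{0}(p)_{m}$ then follows from that of $S^{\sharp}$ (proved in \S\ref{subsec:The-moduli-scheme S=000023}). The one subtlety I would be careful about is checking that the extended map is a \emph{morphism} and not merely a rational map — i.e. that it is defined everywhere on $S_{0}(p)_{m}$ — which I would handle by giving the moduli-theoretic construction of the map directly on all of $S_{0}(p)_{m}$ using the functor of points of $S^{\sharp}$ from \S\ref{subsec:The-moduli-scheme S=000023}, rather than by extending from the ordinary locus.
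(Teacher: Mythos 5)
Your construction in part (i) has a genuine gap. You propose to define $\mathcal{T}S^{\sharp+}$ as the annihilator in $\mathcal{T}S^{\sharp}$ of $KS(\mathcal{P}_{0}\otimes\mathcal{Q})$, justified by the claim that the Kodaira--Spencer isomorphism ``extends'' along the birational morphism $f:S^{\sharp}\to S$. It does not: the natural map $f^{*}\Omega_{S}^{1}\to\Omega_{S^{\sharp}}^{1}$ has cokernel $\Omega_{S^{\sharp}/S}^{1}$, which is non-zero exactly over the deep EO strata where $f$ has positive-dimensional fibers. Consequently every vertical tangent vector (every $\xi$ with $df(\xi)=0$) annihilates the image of $KS(\mathcal{P}_{0}\otimes\mathcal{Q})$ in $\Omega_{S^{\sharp}}^{1}$, so your subsheaf contains the whole relative tangent space of $f$; its fiber at a point $y$ over $S_{w}$ has dimension $m^{2}+(n-m)(a_{\Sigma}(w)-n+m)>m^{2}$ whenever $a_{\Sigma}(w)>n-m$, so it is not a rank-$m^{2}$ sub-bundle and is not transversal to the fibers of $f$. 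In the language of Corollary \ref{cor: characterization of TS+}: annihilating $KS(\mathcal{P}_{0}\otimes\mathcal{Q})$ only forces $\nabla_{\xi}(\mathcal{P}_{0})\subset f^{*}(\mathcal{P}[V])$, whereas the correct condition is the strictly stronger $\nabla_{\xi}(\mathcal{P}_{0})\subset\mathcal{P}_{0}$, and over the blown-up locus $\mathcal{P}_{0}\subsetneq f^{*}(\mathcal{P}[V])$. The paper's definition (Proposition \ref{prop:extension_of_foliation}) instead uses the crystalline description of $\mathcal{T}S_{y}^{\sharp}$ as pairs $(\varphi,\psi)$ from the proof of Theorem \ref{thm:smoothness of S=000023} and imposes $\psi=0$, i.e.\ horizontality of the tautological $\mathcal{P}_{0}$ itself; this does cut out a rank-$m^{2}$ sub-bundle transversal to the fibers, which is what the quotient construction in (ii) needs. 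Your uniqueness argument and the continuity argument for involutivity and $p$-closedness are fine once the correct sub-bundle is in hand.

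Two further points. Extending $\rho$, $\pi_{et}^{\sharp}$, $\theta$, $\theta'$ to the Zariski closures is not automatic from density and separatedness (that gives at most uniqueness of an extension, never existence); the paper constructs these maps moduli-theoretically on all of $S_{0}(p)_{m}$ and $S_{0}(p)_{et}$, checking by closedness that, e.g., $\Fr(\Ver^{-1}(H))$ stays finite flat, and still needs a separate normality argument to extend $\rho'$. You flag this issue for (iii) but rely on the density argument in (ii). Finally, the smoothness of $S_{0}(p)_{et}$, where you hesitate, is obtained exactly along the line you suspect is cleanest: Ekedahl's quotient of $S^{\sharp}$ by $\mathcal{T}S^{\sharp+}$ is automatically non-singular, $\rho$ factors through it by a finite birational morphism onto $S_{0}(p)_{et}$, and normality of $S_{0}(p)_{et}$ (G\"ortz) together with Zariski's Main Theorem identifies the two. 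Your treatment of (iii) essentially matches the paper's.
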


\medskip{}

So far we said nothing about the behavior of the foliation at the
cusps. As mentioned above, $S,$ hence also $S^{\sharp},$ admit smooth
compactifications at the cusps. The Hodge bundle,
with its decomposition $\mathcal{P}\oplus \mathcal{Q}$, as well as $\mathcal{P}_{0}=\mathcal{P}[V]$,
extend as locally free sheaves to a suitable partial compactification of 
$S^{\ord}$ (a smooth toroidal compactification with the Zariski closure of the non-ordinary locus removed). 
The Kodaira-Spencer isomorphism, on the other
hand, acquires log poles along the boundary. See \cite{dS-G3}, \S3
for a detailed analysis, where the foliation is linked to the study of certain
differential operators on unitary modular forms. In the present work, we only need the open Shimura variety $S$.
\medskip{}

In the last section we turn our attention to integral subvarieties
of $\mathcal{T}S^{\sharp+}$ in $S_{\sharp}$. We obtain the following
result. See  Theorem \ref{thm:Sfol_integral_submanifold} for a more precise formulation.

\begin{thm*}
(i) Embedded Shimura varieties associated to $U(m,m)$, or to an inner form of $U(m,m)$, are 
integral subvarieties of $\mathcal{T}S^{\sharp+}$ .

(ii) The EO stratum $S_{\fol}$ is an integral subvariety of $\mathcal{T}S^{\sharp+}$.
\end{thm*}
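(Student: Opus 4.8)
First I would settle on what "integral subvariety of a height-$1$ foliation" means operationally: a smooth closed subvariety $Z\subset S^\sharp$ whose tangent bundle $\mathcal{T}Z$ is, pointwise, contained in $\mathcal{T}S^{\sharp+}|_Z$. Since $\mathcal{T}S^{\sharp+}=KS(\mathcal{P}_0\otimes\mathcal{Q})^\perp$ over $S_\sharp$ (via the Kodaira-Spencer isomorphism and the sub-bundle $\mathcal{P}_0=\mathcal{P}[V]$ of rank $n-m$), the condition "$\mathcal{T}Z\subseteq\mathcal{T}S^{\sharp+}$" translates, under $KS$, into the vanishing of a differential-geometric pairing: the composite
\[
\mathcal{P}_0\otimes\mathcal{Q}\xrightarrow{\;KS\;}\Omega^1_S\longrightarrow \Omega^1_Z
\]
is zero on $Z$. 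So for each of the two families I must produce the subvariety, compute its Hodge decomposition $\mathcal{P}\oplus\mathcal{Q}$ together with the sub-bundle $\mathcal{P}_0$ coming from Verschiebung, and then check that the restriction of $KS$ kills $\mathcal{P}_0\otimes\mathcal{Q}$.

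\emph{Part (i): Shimura subvarieties for $U(m,m)$.} Given an embedding of unitary groups $U(m,m)\hookrightarrow U(n,m)$ (or an inner form) compatible with the Shimura data — concretely, a sub-PEL datum where the $m$-dimensional "minus" part $\mathcal{Q}$ of the Hodge filtration is kept and the "plus" part is shrunk from dimension $n$ to dimension $m$ — one gets, by functoriality of Kottwitz's moduli problem, a closed embedding $\iota\colon Z\hookrightarrow S$ of the corresponding special fiber. The key point is that on $Z$ the abelian varieties are \emph{ordinary} in the usual sense, because the signature is balanced $(m,m)$; hence $V$ is injective on all of $\mathcal{P}|_Z$, so $\mathcal{P}_0|_Z=\mathcal{P}[V]|_Z=0$. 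Therefore $KS(\mathcal{P}_0\otimes\mathcal{Q})|_Z=0$ trivially, and $\mathcal{T}Z\subseteq\mathcal{T}S^+|_Z$ is automatic; since $Z$ lies in the $\mu$-ordinary locus it already sits inside $S_\sharp$, so no blow-up is needed and $Z$ lifts to an integral subvariety of $\mathcal{T}S^{\sharp+}$. The only real work is bookkeeping: choosing the embedding of Shimura data carefully (including the level structures and the case of an inner form, where one uses a skew-Hermitian space of dimension $2m$ that is split by $E$), and checking that the resulting map on integral models is a closed immersion — for which one invokes the smoothness of both sides and a tangent-space computation, or simply the known functoriality results for these Kottwitz models.

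\emph{Part (ii): the EO stratum $S_{\fol}$.} Here $S_{\fol}$ is the unique minimal EO stratum contained in $S_\sharp$, of dimension exactly $m^2$, which by the earlier combinatorial analysis of $(n,m)$-shuffles has been identified by its elementary sequence / final type. The strategy is a dimension-plus-containment argument: show $\dim S_{\fol}=m^2=\rk\mathcal{T}S^{\sharp+}$, so it suffices to show $\mathcal{T}(S_{\fol})\subseteq\mathcal{T}S^{\sharp+}$ at a generic (smooth) point of $S_{\fol}$; equivalently $KS(\mathcal{P}_0\otimes\mathcal{Q})$ annihilates $\mathcal{T}S_{\fol}$, i.e.\ the composite $\mathcal{P}_0\otimes\mathcal{Q}\to\Omega^1_S\to\Omega^1_{S_{\fol}}$ vanishes. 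To see this I would use that along an EO stratum the de Rham cohomology with its $F,V$-structure is locally constant (the isomorphism type of the $p$-tororsion is fixed), which pins down the rank and position of $\mathcal{P}_0$; then, because the stratum is cut out (set-theoretically, and after the blow-up $S^\sharp$ scheme-theoretically in the right charts) by the vanishing of exactly the entries of $V$ that land outside $\mathcal{P}_0$, the conormal directions of $S_{\fol}$ are precisely spanned by $KS$-images of $\mathcal{P}_0\otimes\mathcal{Q}$ — this is the content one extracts from the explicit local model / Dieudonn\'e-display description of the EO stratification combined with the Kodaira-Spencer formula $\nabla$ on $H^1_{dR}$. Granting this identification of conormal bundles, the tangent space of $S_{\fol}$ is exactly the annihilator of $KS(\mathcal{P}_0\otimes\mathcal{Q})$, which is the definition of $\mathcal{T}S^{\sharp+}$, and we even get equality of tangent spaces, i.e.\ $S_{\fol}$ is a leaf.

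\emph{The main obstacle.} The serious point is Part (ii): matching the conormal bundle of the EO stratum $S_{\fol}$ with $KS(\mathcal{P}_0\otimes\mathcal{Q})$. This requires an honest local computation in the (blown-up) Kottwitz local model — writing down the universal Dieudonn\'e module with its Verschiebung matrix in EO-normal form, identifying which matrix entries are the deformation parameters transverse to $S_{\fol}$, and then feeding these through the Kodaira-Spencer isomorphism to see that they correspond exactly to $\mathcal{P}_0\otimes\mathcal{Q}$. One must also verify that $S_{\fol}$, which is a priori only locally closed and possibly singular in $S$, is smooth after pulling back to $S^\sharp$ (or at least smooth at its generic point, which is all the leaf condition needs), and that it genuinely lies in $S_\sharp$ where the foliation is a sub-bundle rather than merely a subsheaf. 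Part (i), by contrast, is essentially formal once the Shimura-data embedding is set up, precisely because ordinariness forces $\mathcal{P}_0=0$.
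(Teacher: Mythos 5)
Your general criterion is the right one: vanishing of the composite $\mathcal{P}_{0}\otimes\mathcal{Q}\to\Omega_{S}^{1}\to\Omega_{Z}^{1}$ together with the dimension count $\dim Z=m^{2}=\rk(\mathcal{T}S^{\sharp+})$. But the way you verify it in part (i) contains a genuine error. You claim that on $Z$ the abelian varieties are ordinary, hence $V$ is injective on $\mathcal{P}|_{Z}$ and $\mathcal{P}_{0}|_{Z}=0$. This is impossible: $\mathcal{P}$ is the $\Sigma$-part of the Hodge bundle of the \emph{universal} $(n+m)$-dimensional family over $S$, restricted to $Z$; it has rank $n$, and $V_{\mathcal{P}}$ maps it into $\mathcal{Q}^{(p)}$, of rank $m<n$, so $\ker(V_{\mathcal{P}})$ has rank at least $n-m>0$ at \emph{every} point of $S$. (Moreover, if $\mathcal{P}_{0}$ were zero the annihilator of $KS(\mathcal{P}_{0}\otimes\mathcal{Q})$ would be all of $\mathcal{T}S$, of rank $nm$, not $m^{2}$.) You have conflated the Hodge bundle of the $2m$-dimensional family intrinsic to the $U(m,m)$ Shimura variety with that of the restricted universal family: points of $Z$ still parametrize $(n+m)$-dimensional abelian varieties, which are only $\mu$-ordinary, the complementary factor having signature $(n-m,0)$ and being supersingular since $p$ is inert. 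The correct reason the composite vanishes is that along $Z$ the factor carrying $\mathcal{P}_{0}$ is (iso)constant, so $\nabla_{\xi}$ preserves $\mathcal{P}_{0}$ for $\xi\in\mathcal{T}Z$ and one applies the characterization of $\mathcal{T}S^{+}$ via the Gauss--Manin connection; the paper instead deduces (i) from the compatibility of the embedding with Moonen's generalized Serre--Tate coordinates, mapping $\widehat{S'_{x}}$ into the canonical formal torus $\widehat{T}_{\iota(x)}$, whose tangent space is $\mathcal{T}S_{x}^{+}$.

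For part (ii) your outline coincides with the paper's strategy (dimension count plus identification of the conormal bundle of $S_{\fol}$ with $KS(\mathcal{P}_{0}\otimes\mathcal{Q})$), but the entire difficulty sits in the step you flag as ``the serious point'' and do not carry out: proving that, on the first-order neighborhood $R=\mathcal{O}_{S,x}/\mathfrak{m}_{S,x}^{2}$, the stratum $S_{\fol}$ is cut out exactly by the $(n-m)m$ deformation parameters $u_{ij}$ dual to $KS(\mathcal{P}_{0}\otimes\mathcal{Q})$. Local constancy of the isomorphism type of $\mathcal{A}[p]$ along the stratum does not by itself produce these equations. The paper's mechanism is: over $S_{\fol}$ the canonical filtration of $\mathcal{A}[p]$ exists as a filtration by finite flat group schemes, yielding a sub-bundle $\mathcal{M}\subset\mathcal{D}$ with $V(\mathcal{Q})\subset\mathcal{M}^{(p)}$ (with equality and $\mathcal{M}=FV^{-1}(0)(\Sigma)$ when $2m\le n$, and with $\mathcal{M}=V^{-2r}F^{2r+1}V^{-1}(0)(\Sigma)$ for a suitable $r$ when $n<2m$); since the absolute Frobenius of $R$ factors through the residue field, $\mathcal{M}^{(p)}$ is \emph{constant} on the square-zero thickening, and comparing with the explicit deformation of the Hodge filtration forces $u_{ij}=0$ on $R_{\fol}$. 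Without an argument of this kind your identification of the conormal bundle remains an assertion rather than a proof, so part (ii) as written is a plan, not a complete argument.
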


While part (i) is easy, the proof of (ii) uses the \emph{canonical filtration of $\mathcal{A}[p]$
}over the EO stratum\emph{ $S_{\fol},$ } and requires some effort,
although the idea behind it is simple.

We end the paper with a discussion of a conjecture ``of Andr\'e-Oort
type'' that Shimura varieties of signature $(m,m)$, and the EO stratum
$S_{\fol}$, are the only global integral subvarieties of $\mathcal{T}S^{\sharp+}$.
Despite the fact that we do not know if the foliation lifts to a height
$h$ foliation for $h>1$ in any natural way, hence locally formally
the integral subvarieties are not unique, we believe that its global
nature makes the conjecture plausible.

\medskip{}

The results obtained in this paper generalize results that have been
obtained for Picard modular surfaces, associated with a unitary group
of signature $(2,1),$ in \cite{=00005BdS-G2=00005D}. On the other
hand it seems that with some extra effort they should generalize to
all Shimura varieties of PEL type.

\medskip{}

\textbf{Acknowledgments. }We would like to thank L. Illusie and C.
Liedtke for helpful discussions related to this work.

\section{Background}

\subsection{\label{subsec:Unitary-Shimura-varieties}Unitary Shimura varieties}

\subsubsection{The Shimura variety and its integral model at a good prime}

Let $E$ be a quadratic imaginary field, $0\le m\le n$ and $\Lambda=\mathcal{O}_{E}^{n+m}$,
equipped with the hermitian pairing
\[
(u,v)=\,^{t}\overline{u}\left(\begin{array}{ccc}
 &  & 1_{m}\\
 & 1_{n-m}\\
1_{m}
\end{array}\right)v.
\]
Here $1_{k}$ is the identity matrix of size $k$. Let $\delta$ be
a square root of the discriminant of $E,$ so that $\overline{\delta}=-\delta,$
and denote by $\text{Im\ensuremath{_{\delta}(z)=(z-\overline{z})/\delta}}$.
Then
\[
\left\langle u,v\right\rangle =\text{Im\ensuremath{_{\delta}(u,v)}}
\]
is $\mathbb{Q}$-bilinear, skew-symmetric, satisfies $\left\langle au,v\right\rangle =\left\langle u,\overline{a}v\right\rangle $,
and $\Lambda$ is self-dual, i.e. $\left\langle .,.\right\rangle $
induces $\Lambda\simeq\Hom(\Lambda,\mathbb{Z})$. Let $\boldsymbol{G}$
be the general unitary group of $(\Lambda,(.,.)),$ viewed as a group
scheme over $\mathbb{Z}.$ For every commutative ring $R$
\[
\boldsymbol{G}(R)=\left\{ g\in GL_{n+m}(\mathcal{O}_E\otimes R)|\,\exists\mu(g)\in R^{\times},\,\,(gu,gv)=\mu(g)(u,v)\right\} .
\]

Fix an \emph{odd} prime $p$ which is \emph{unramified} in $E$, and
an integer $N\ge3$ relatively prime to $p.$ Let $\mathbb{A}=\mathbb{R}\times\mathbb{A}_{f}$
be the ad\`ele ring of $\mathbb{\mathbb{Q}}$, where $\mathbb{A}_{f}=\mathbb{Q}\cdot\mathbb{\widehat{Z}}$
are the finite ad\`eles. Let $K_{f}\subset\boldsymbol{G}(\mathbb{\widehat{Z}})$
be an open subgroup of the form $K_{f}=K^{p}K_{p},$ where $K^{p}\subset\boldsymbol{G}(\mathbb{A}^{p})$
is the principal congruence subgroup of level $N$, and
\[
K_{p}=\boldsymbol{G}(\mathbb{Z}_{p})\subset\boldsymbol{G}(\mathbb{Q}_{p})
\]
the hyperspecial maximal compact subgroup at $p.$ Let $K_{\infty}\subset\boldsymbol{G}(\mathbb{R})$
be the stabilizer of the negative definite subspace spanned by $\{-e_{i}+e_{n+i};\,1\le i\le m\}$
in $\Lambda_{\mathbb{R}}=\mathbb{C}^{n+m}$, where $\left\{ e_{i}\right\} $
stands for the standard basis. This $K_{\infty}$ is a maximal compact-modulo-center
subgroup, isomorphic to $G(U(m)\times U(n)).$ By $G(U(m)\times U(n))$
we mean the pairs of matrices $(g_{1},g_{2})\in GU(m)\times GU(n)$
having the same similitude factor. Let $K=K_{\infty}K_{f}\subset\boldsymbol{G}(\mathbb{A})$
and $X=\boldsymbol{G}(\mathbb{R})/K_{\infty}.$

To the Shimura datum $(\boldsymbol{G},X)$ there is associated a Shimura
variety $Sh_{K}$. It is a quasi-projective smooth variety of dimension
$nm$ over $E$. If $m=n$ our Shimura variety is even defined over
$\mathbb{Q}$, but we still denote by $Sh_{K}$ its base-change to
$E$. The complex points of $Sh_{K}$ are identified, as a complex
manifold, with
\[
Sh_{K}(\mathbb{C})=\boldsymbol{G}(\mathbb{Q})\backslash\boldsymbol{G}(\mathbb{A})/K.
\]

Fix an embedding $\overline{\mathbb{Q}}\subset\overline{\mathbb{Q}}_{p}$
and let $v|p$ be the prime of $E$ induced by it. Following Kottwitz
\cite{=00005BKo=00005D}, but using a somewhat more restrictive set-up
suitable for principally polarized abelian varieties, we define a
scheme $\mathcal{S}$ over the localization $\mathcal{O}_{E,v}$ of
$\mathcal{O}_{E}$ at~$v$. This $\mathcal{S}$ is a fine moduli space
whose $R$-points, for every $\mathcal{O}_{E,v}$-algebra $R$, classify
isomorphism types of tuples $\underline{A}=(A,\iota,\phi,\eta)$ where
\begin{itemize}
\item $A$ is an abelian scheme of dimension $n+m$ over $R$.
\item $\iota:\mathcal{O}_{E}\hookrightarrow\End(A)$ has signature $(n,m)$
on the Lie algebra of $A$.
\item $\phi:A\overset{\sim}{\to}A^{t}$ is a principal polarization whose
Rosati involution induces $\iota(a)\mapsto\iota(\overline{a})$ on
the image of $\iota$.
\item $\eta$ is an $\mathcal{O}_{E}$-linear full level-$N$ structure
on $A$ compatible with $(\Lambda,\left\langle .,.\right\rangle )$
and $\phi$ (\cite{=00005BLan=00005D}, 1.3.6).
\end{itemize}
We shall summarize the above requirements by saying that $\underline{A}$
is a \emph{structure of type} $\mathscr{D}$ over $R$. See also \cite{=00005BLan=00005D}
for the comparison of the various languages used to define the moduli
problem. 

The generic fiber $S_{K}$ of $\mathcal{S}$ is, in general, a union
of \emph{several} Shimura varieties of the type $Sh_{K}.$ This is
due to the failure of the Hasse principle, which can happen when $m+n$
is odd. We also remark that the assumption $N\ge3$ could be avoided
if we were willing to use the language of stacks. As this is not essential
to the present paper, we keep the scope slightly limited for the sake
of clarity.

As shown by Kottwitz, $\mathcal{S}$ is \emph{smooth} of relative
dimension $nm$ over $\mathcal{O}_{E,v}.$ It even admits smooth (toroidal)
compactifications at the cusps, \emph{cf. }{[}Lan{]}.

\subsubsection{The universal abelian variety and the Kodaira-Spencer isomorphism}

The moduli space $\mathcal{S}$ carries a universal abelian scheme
$\mathcal{A}$ of dimension $n+m,$ equipped with $\iota,\phi,\eta$
as above. Let $\pi:\mathcal{A}\to\mathcal{S}$ be the structure morphism.
We denote by $\mathcal{A}^{t}$ the dual abelian scheme.

We let $\Sigma$ denote the identity embedding of $\mathcal{O}_{E}$
in $\mathcal{O}_{E,v}$ and $\overline{\Sigma}$ its complex conjugate.
Since $p$ is unramified in $E$, the locally free sheaves $H_{dR}^{1}(\mathcal{A}/\mathcal{S}),\,\,\omega_{\mathcal{A}/\mathcal{S}}=R^{0}$$\pi_{*}\Omega_{\mathcal{A}/\mathcal{S}}^{1}$
and $\omega_{\mathcal{A}^{t}/\mathcal{S}}^{\vee}=R^{1}\pi_{*}\mathcal{O}_{\mathcal{A}}$
decompose as direct sums of their $\Sigma$ and $\overline{\Sigma}$-parts
under the action of $\iota(\mathcal{O}_{E}).$ We write
\[
\mathcal{P}=\omega_{\mathcal{A}/\mathcal{S}}(\Sigma),\,\,\,\mathcal{Q}=\omega_{\mathcal{A}/\mathcal{S}}(\overline{\Sigma}).
\]
These are locally free sheaves of ranks $n$ and $m$ respectively
on $\mathcal{S}$.

The Kodaira-Spencer map is the \emph{sheaf homomorphism,}
\[
KS:\mathcal{P}=\omega_{\mathcal{A}/\mathcal{S}}(\Sigma)\to\Omega_{\mathcal{S}}^{1}\otimes\omega_{\mathcal{A}^{t}/\mathcal{S}}^{\vee}(\Sigma),
\]
obtained by embedding $\omega_{\mathcal{A}/\mathcal{S}}$
in $H_{dR}^{1}(\mathcal{A}/\mathcal{S}),$ applying the Gauss-Manin
connection
\[
\nabla:H_{dR}^{1}(\mathcal{A}/\mathcal{S})\to\Omega_{\mathcal{S}}^{1}\otimes H_{dR}^{1}(\mathcal{A}/\mathcal{S}),
\]
and finally projecting $H_{dR}^{1}(\mathcal{A}/\mathcal{S})$ to $\omega_{\mathcal{A}^{t}/\mathcal{S}}^{\vee}$.
Since the polarization $\phi$ induces an identification
\[
\phi^{*\vee}:\omega_{\mathcal{A}^{t}/\mathcal{S}}^{\vee}(\Sigma)\overset{\sim}{\leftarrow}\omega_{\mathcal{A}/\mathcal{S}}^{\vee}(\overline{\Sigma})=\mathcal{Q}^{\vee}
\]
the Kodaira-Spencer map yields a homomorphism, which we denote by
the same symbol
\[
KS:\mathcal{P}\otimes\mathcal{Q}\to\Omega_{\mathcal{S}}^{1}.
\]
This map turns out to be an \emph{isomorphism}. See \cite{=00005BLan=00005D}, Proposition 2.3.5.2.

\subsubsection{\label{subsec:The-NP-and EO}The NP and EO stratifications of the
special fiber of $\mathcal{S}$}

We briefly review some facts about these two stratifications, as the
EO stratification is going to play a central role later in the paper. 

Let $S$ be the special fiber of $\mathcal{S}.$ It is a smooth variety
over $\kappa=\kappa_{v},$ the residue field of $v$. Let $k$ be
an algebraically closed field containing $\kappa,$ and $x\in S(k).$
Let $NP_{x}$ be the Newton polygon of the $p$-divisible group $\mathcal{A}_{x}[p^{\infty}].$
It is lower convex, starts at $(0,0)$, ends at $(2(n+m),n+m)$, and
has integral break-points. Then $NP_{x}$ classifies the \emph{$k$-isogeny
class} of $\mathcal{A}_{x}[p^{\infty}].$ The set of Newton polygons
is partially ordered, where $P'\ge P$ if $P'$ lies on or above $P$.
For every Newton polygon $P$ there is a locally closed stratum $S_{P}$
in $S,$ defined over $\kappa,$ whose geometric points are precisely
those with $NP_{x}=P.$ The closure of a non-empty $S_{P}$ is the
union of the $S_{P'}$ for all $P'$ satisfying $P'\ge P$ (\cite{=00005BV-W=00005D},
\S11). This gives the Newton polygon (NP) stratification of $S.$

The Ekedahl-Oort (EO) stratification of $S$ is another stratification,
by the \emph{isomorphism type} of $\mathcal{A}_{x}[p]$. In addition
to the references already cited in the introduction, see also \cite{=00005BWoo=00005D}
for a thorough discussion of the case at hand. The EO strata $S_{w}$
are locally closed subsets labeled by certain elements $w$ in the
Weyl group $W$ of $\boldsymbol{G}.$ More precisely, the $w$ are
distinguished representatives for the cosets $W_{J}\backslash W$,
where the subgroup $W_{J}$ is determined by the signature condition.
The $S_{w}$ are equidimensional, smooth and quasi-affine. The dimension
of $S_{w}$ is $l(w),$ the length of $w$ relative to the Bruhat
order on $W$. The closure of $S_{w}$ is the union of $S_{w'}$ for
$w'\preceq w$ under a certain rather complicated order (related to,
but different from the usual Bruhat order; see \cite{=00005BV-W=00005D},
Theorems 2 and 3). We call it the \emph{EO order} on the Weyl group
elements indexing the strata. See below for a full description when
$p$ is inert in $E$ and $m<n$.

Wedhorn and Moonen have proved the following \cite{=00005BWe=00005D},
\cite{=00005BMo1=00005D}.

\medskip{}

\textbf{Fact. }There is a unique largest NP stratum, and a unique
largest EO stratum. These two strata coincide, and form an open dense
subset $S^{\ord}\subset S,$ called\footnote{In \cite{=00005BdS-G2=00005D} $S^{\ord}$ was denoted $S_{\mu}$
and called the $\mu$-ordinary locus.} the \emph{ordinary} locus of $S$. The isomorphism type of the whole
$p$-divisible group $\mathcal{A}_{x}[p^{\infty}]$ (with its endomorphisms
and polarization) is constant as $x$ varies along $S^{\ord},$ and
can be given explicitly in terms of the data $\mathscr{D}.$

\medskip{}

If $p$ is split in $E$ or $n=m$ then $\mathcal{A}_{x}[p^{\infty}]\simeq(\mathbb{Q}_{p}/\mathbb{Z}_{p})^{n+m}\times\mu_{p^{\infty}}^{n+m}$
for all $x\in S^{\ord}(k)$, so $\mathcal{A}_{x}$ is ordinary. If
$p$ is inert in $E$ and $m<n$ this is not the case, and for $x\in S^{\ord}$
\begin{equation}
\mathcal{A}_{x}[p^{\infty}]\simeq(\mathcal{O}_{E}\otimes\mathbb{Q}_{p}/\mathbb{Z}_{p})^{m}\times\mathscr{G}_{\Sigma}^{n-m}\times(\mathcal{O}_{E}\otimes\mu_{p^{\infty}})^{m}\label{eq:mu_ord_p_div_gp}
\end{equation}
where $\mathscr{G}_{\Sigma}$ is the unique 1-dimensional, height
2, slope 1/2, self-dual $p$-divisible group over $k$. The subscript
$\Sigma$ means that the embedding of $\mathcal{O}_{E}$ in $\End_{k}(\mathscr{G})$
via $\iota$ induces on $\Lie(\mathscr{G})$ the type $\Sigma,$ rather
than $\overline{\Sigma}.$ In this case it is customary to call $\mathcal{A}_{x}$,
for $x$ in the ordinary locus, $\mu$\emph{-ordinary}.

\[
\boxed{\text{From now on we assume that \ensuremath{p} is inert in \ensuremath{E} and \ensuremath{m<n.}}}
\]

\medskip{}

Under this assumption $v=(p),$ so we write $\mathcal{O}_{E,(p)}$
instead of $\mathcal{O}_{E,v},$ and $\kappa=\mathbb{F}_{p^{2}}.$
The lowest EO strata $S_{id}$ (labeled by $w=id$) is $0$-dimensional
and when $p$ is inert it classifies \emph{superspecial }abelian varieties,
i.e. those for which $\mathcal{A}_{x}[p^{\infty}]\simeq\mathscr{G}_{\Sigma}^{n}\times\mathscr{G}_{\overline{\Sigma}}^{m}.$
We call this stratum the \emph{core points.}

If $m=1$ the EO stratification has been worked out completely by
B\"ultel and Wedhorn \cite{=00005BB-W=00005D}. The strata are linearly
ordered, their dimensions dropping by 1 each time. Thus there are
$n+1$ EO strata altogether. As long as the dimension of the stratum
is strictly larger than $\left\lfloor n/2\right\rfloor $, the EO
strata are also NP strata. In fact, the isomorphism type of the whole
$p$-divisible group (with its endomorphisms and polarization) is
constant along these strata, as it was on $S^{\ord}.$ Half the way
through, in dimension $\left\lfloor n/2\right\rfloor $, one reaches
the supersingular NP stratum, which is stratified further by EO strata.
We remark also that from dimension $\left\lfloor n/2\right\rfloor $
down, the isomorphism type of $\mathcal{A}_{x}[p^{\infty}]$ is no
longer constant along the EO strata, only that of $\mathcal{A}_{x}[p].$

In general, we may identify the Weyl group of $\boldsymbol{G}$ with
$\mathfrak{S}_{n+m},$ the group of permutations of $\{1,\dots,n+m\}.$
Let $W_{J}=\mathfrak{S}_{n}\times\mathfrak{S}_{m}$. The elements
$w$ indexing the EO strata belong then to the set $\Pi(n,m)$ of
$(n,m)$-\emph{shuffles} in $\mathfrak{S}_{n+m}.$ A permutation $w$
is called an $(n,m)$-shuffle if
\[
w^{-1}(1)<\cdots<w^{-1}(n),\,\,\,\,w^{-1}(n+1)<\cdots<w^{-1}(n+m).
\]
The set $\Pi(n,m)$ is clearly a set of representatives for $W_{J}\backslash W.$

The dimension of $S_{w}$ is given by the formula
\begin{equation}
l(w)=\sum_{i=1}^{n}(w^{-1}(i)-i).\label{eq:length_of_w}
\end{equation}

We shall also need to know a formula for 
\[
a_{\Sigma}(w)=\dim\mathcal{P}_{x}[V]
\]
($x\in S_{w}(k)).$ Here $V$ is the map induced on cohomology by
Verschiebung, see below. This number is the $\Sigma$-part of Oort's
$a$-number of $\mathcal{A}_{x}.$ It turns out that it is given by
\begin{equation}
a_{\Sigma}(w)=|\{i|\,1\le i\le n,\,\,\,1\le w^{-1}(i)\le n\}|.\label{eq:sigma_part_a_number}
\end{equation}
For the formulae (\ref{eq:length_of_w}), (\ref{eq:sigma_part_a_number})
see \cite{=00005BWoo=00005D}, \S3.4 and \S3.5. For example, if $m=1,$
$a_{\Sigma}(w)=n-1$ except if $w=1$ (corresponding to the core points),
where it becomes $n.$ In general, for $w$ the longest $(n,m)$-shuffle
(of length $nm)$, where $S_{w}=S^{\ord},$ $a_{\Sigma}(w)=n-m.$
For $w=id$, corresponding to the core points, $a_{\Sigma}(id)=n.$

Finally, we make explicit the EO order relation $w'\preceq w$ on
the set $\Pi(n,m)$, following \cite{=00005BWoo=00005D}, Example
3.1.3. Recall that $w'\preceq w$ if and only if $S_{w'}\subset\overline{S_{w}}.$
Let
\[
w_{0,J}=\left(\begin{array}{cccccc}
1 & \dots & n & n+1 & \dots & n+m\\
n & \dots & 1 & n+m & \dots & n+1
\end{array}\right).
\]
Let $\le$ be the usual Bruhat order on $\mathfrak{S}_{n+m}$ with
respect to the standard set of reflections $s_{i}=(i,i+1)$ ($1\le i<n+m$).
Note that $w_{0,J}$ is the element of maximal length in $W_{J}.$
Then $w'\preceq w$ if and only if there exists a $y\in W_{J}$ such
that
\[
yw'w_{0,J}y^{-1}w_{0,J}\le w.
\]

Taking $y=1$ we see that if $w'\le w$ then also $w'\preceq w.$
This is the only property of the EO order relation that will be used
in the paper.

\subsubsection{\label{subsec:Frobenius,-Verschiebung-and}Frobenius, Verschiebung
and the Hasse invariant}

For any scheme $X$ in characteristic $p$ we denote by $\Phi_{X}$
the absolute Frobenius morphism of degree $p$ of $X$. Let
\[
\mathcal{A}^{(p)}=S\times_{\Phi_{S},S}\mathcal{A}
\]
be the base-change of the universal abelian variety. Let
\[
\Fr=Fr_{\mathcal{A}/S}:\mathcal{A}\to\mathcal{A}^{(p)}
\]
be the relative Frobenius morphism. It is an isogeny of abelian schemes
over $S,$ of degree $p^{n+m}.$ The isogeny dual to $Fr_{\mathcal{A}^{t}/S}$
is called the Verschiebung of $\mathcal{A}$ and is denoted
\[
\Ver=Ver_{\mathcal{A}/S}:\mathcal{A}^{(p)}\to\mathcal{A}.
\]
It too is of degree $p^{n+m}$ and $Ver_{\mathcal{A}/S}\circ Fr_{\mathcal{A}/S}$
is multiplication by $p$ on $\mathcal{A}.$

The maps induced by $Fr_{\mathcal{A}/S}$ and $Ver_{\mathcal{A}/S}$
on cohomology will be denoted $F$ and~$V.$ It is well-known that
\[
F:H_{dR}^{1}(\mathcal{A}^{(p)}/S)\to H_{dR}^{1}(\mathcal{A}/S)
\]
is a homomorphism of vector bundles of constant rank $\mathrm{rk}(\mathrm{Im}(F))=n+m.$ Its image
is a sub-bundle (i.e. a locally free sub-sheaf, the quotient of $H_{dR}^{1}(\mathcal{A}/S)$
by which is also locally free) and coincides with $H_{dR}^{1}(\mathcal{A}/S)[V].$
Similarly, the image of 
\[
V:H_{dR}^{1}(\mathcal{A}/S)\to H_{dR}^{1}(\mathcal{A}^{(p)}/S)
\]
is a sub-bundle, equal to $H_{dR}^{1}(\mathcal{A}^{(p)}/S)[F].$

The same can not be said about the restriction of $V$ to $\omega_{\mathcal{A}/S}.$
While $\omega_{\mathcal{A}/S}[V]$ is clearly a saturated sub-sheaf
of $\omega_{\mathcal{A}/S}$, its rank may increase when we move from
one EO stratum to a smaller one, contained in its closure. Hence,
$\omega_{\mathcal{A}/S}[V]$ is in general not a sub-bundle. It is,
however, a sub-bundle of rank $n-m$, if we restrict it to the
ordinary locus.

Since $F$ and $V$ commute with the endomorphisms, they induce maps
between the $\Sigma$ and the $\overline{\Sigma}$-parts. Note however
that
\[
H_{dR}^{1}(\mathcal{A}^{(p)}/S)(\Sigma)=H_{dR}^{1}(\mathcal{A}/S)^{(p)}(\Sigma)=H_{dR}^{1}(\mathcal{A}/S)(\overline{\Sigma})^{(p)}.
\]
In particular we get maps
\[
V_{\mathcal{P}}:\mathcal{P}\to\mathcal{Q}^{(p)},\,\,\,\,\,V_{\mathcal{Q}}:\mathcal{Q}\to\mathcal{P}^{(p)}.
\]
The sheaf homomorphism
\[
H_{\mathcal{A}/S}=V_{\mathcal{P}}^{(p)}\circ V_{\mathcal{Q}}:\mathcal{Q}\to\mathcal{Q}^{(p^{2})}
\]
is called the \emph{Hasse map}. Let $\mathcal{L}=\det(\mathcal{Q})$,
a line bundle on $S.$ Note that for every line bundle $\mathcal{L}$
there is a canonical isomorphism $\mathcal{L}^{(p)}\simeq\mathcal{L}^{p},$
sending $1\otimes s$ to $s\otimes\cdots\otimes s$ (here $s$ is
a section of $\mathcal{L}).$ Thus $h_{\mathcal{A}/S}=\det(H{}_{\mathcal{A}/S})$
is a homomorphism from $\mathcal{L}$ to $\mathcal{L}^{p^{2}},$ which
is the same as a global section
\[
h_{\mathcal{A}/S}\in H^{0}(S,\mathcal{L}^{p^{2}-1}),
\]
i.e. a modular form ``of weight $\mathcal{L}^{p^{2}-1}$'' called
the \emph{Hasse invariant}. This construction of the Hasse invariant, which plays an important role in the study
of $p$-adic modular forms, is due to Goldring and Nicole. See \cite[Appendix B]{GN}, and the main body of their paper for further generalizations. The relation with the stratifications
of $S$ is the following.

\medskip{}

\textbf{Fact. }\cite{=00005BWoo=00005D} Let $S^{\no}$ be the complement
of $S^{\ord}$ in $S$, endowed with its reduced subscheme structure.
Then $S^{\no}$ is a Cartier divisor and
\[
S^{\no}=\div(h{}_{\mathcal{A}/S}).
\]

\subsubsection{Pairings in de Rham cohomology of abelian varieties}

We review some general facts on de Rham cohomology of abelian varieties.
If $A/k$ is an abelian variety over a field $k$ (or more generally,
an abelian scheme over a ring) we let $A^{t}$ denote the dual abelian
variety. There is then a canonical perfect bilinear pairing
\[
\{.,.\}=\{.,.\}_{A}:H_{dR}^{1}(A/k)\times H_{dR}^{1}(A^{t}/k)\to k.
\]
When we use the canonical identification of $A$ with $(A^{t})^{t}$
we have
\[
\{u,v\}_{A}=-\{v,u\}_{A^{t}}.
\]
Indeed, one may identify $H_{dR}^{1}(A^{t}/k)$ with $H_{dR}^{2g-1}(A/k)$
($g=\dim A$), and then the pairing is given by cup product, followed
by the trace. If $\alpha:A\to B$ is an isogeny, we let $\alpha^{t}:B^{t}\to A^{t}$
be the dual isogeny, and then
\[
\{\alpha^{*}u,v\}_{A}=\{u,(\alpha^{t})^{*}v\}_{B}.
\]
If $\phi:A\overset{\sim}{\to}A^{t}$ is a principal polarization then
$\phi=\phi^{t}$ (using the identification of $(A^{t})^{t}$ with
$A$). The \emph{polarization pairing}
\[
\{u,v\}_{\phi}=\{u,(\phi^{-1})^{*}v\}_{A}:H_{dR}^{1}(A)\times H_{dR}^{1}(A)\to k
\]
is \emph{skew-symmetric}, as follows from the preceding two properties.

If $\alpha\in\End(A)$ then $Ros_{\phi}(\alpha)\in\End(A)$ is defined
by
\[
Ros_{\phi}(\alpha)=\phi^{-1}\circ\alpha^{t}\circ\phi.
\]
The previous properties imply then
\[
\{\alpha^{*}u,v\}_{\phi}=\{u,Ros_{\phi}(\alpha)^{*}v\}_{\phi}.
\]

If we apply the above for our $A$'s, figuring in a tuple $\underline{A}\in S(k)$,
we get that $\{,\}$ pairs $H_{dR}^{1}(A/k)(\Sigma)$ non trivially
with $H_{dR}^{1}(A^{t}/k)(\Sigma)$, while $\{,\}_{\phi}$ pairs $H_{dR}^{1}(A/k)(\Sigma)$
non-trivially with $H_{dR}^{1}(A/k)(\overline{\Sigma})$.

Finally we recall that $\omega_{A/k}$ and $\omega_{A^{t}/k}$ are
mutual annihilators of each other under $\{,\}.$ This induces a perfect
pairing between $\omega_{A/k}$ with $H^{1}(A^{t},\mathcal{O}),$
hence the identification of $H^{1}(A^{t},\mathcal{O})$ with the Lie
algebra of $A$.

\subsection{\label{subsec:Foliations-and-inseparable}Foliations and inseparable
morphisms of height 1}

\subsubsection{Foliations of height 1}

In this section we review some general facts from algebraic geometry
in characteristic $p,$ due to Rudakov and Shafarevich \cite{=00005BRu-Sh=00005D},
Ekedahl \cite{=00005BEk=00005D} and Miyaoka \cite{=00005BMi=00005D}.
At the birational level they should be traced back, as mentioned in
the introduction, to Jacobson's theorem which establishes a ``Galois
theory'' for finite purely inseparable field extensions using derivations
\cite{=00005BJa=00005D}.

Let $k$ be an algebraically closed field of characteristic $p,$
and $X$ a non-singular $n$-dimensional variety over $k.$ Let $\mathcal{T}X$
be the tangent sheaf of $X,$ a locally free sheaf of rank $n.$ Recall
that $\mathcal{T}X$ becomes a $p$-Lie algebra over $k$ if for any
two vector fields $\xi,\eta$ defined in some open set $U$ and regarded
as operators on $\mathcal{O}_{X}(U)$, we let
\[
[\xi,\eta]=\xi\circ\eta-\eta\circ\xi,\,\,\,\,\,\xi^{(p)}=\xi\circ\xi\circ\cdots\circ\xi
\]
(composition $p$ times).
\begin{defn*}
A \emph{foliation of height 1} on $X$ is a sub-bundle $\mathcal{E}\subset\mathcal{T}X$
(i.e. locally a direct summand), which is a $p$-Lie subalgebra, i.e.
involutive (closed under the Lie bracket) and closed under $\xi\mapsto\xi^{(p)}.$
\end{defn*}
Foliations of higher height (as in \cite{=00005BEk=00005D}) will
not show up in this paper, until we discuss integral subvarieties
at the end. We shall therefore refer to height 1 foliations simply
as ``foliations''. If $\mathcal{E}$ is a line sub-bundle then $\mathcal{E}$
is automatically involutive, as any two sections of $\mathcal{E}$
are proportional. But even in rank 1 the condition of being $p$-closed
is non-void, as the following example shows. Let $X=\mathbb{A}_{k}^{2}$
and let $\mathcal{E}=\mathcal{O}_{X}\cdot\xi$ where
\[
\xi=x\frac{\partial}{\partial x}+\frac{\partial}{\partial y}.
\]
It is easily checked that $\xi^{(p)}=x\frac{\partial}{\partial x}$,
but this is not a section of $\mathcal{E}.$

If $Y\subset X$ is a non-singular subvariety then $\mathcal{T}Y\subset\mathcal{T}X|_{Y}$
may be considered ``a foliation of height 1 along $Y$''. We call
$Y$ an \emph{integral subvariety }for the foliation $\mathcal{E}$
if $\mathcal{E}|_{Y}=\mathcal{T}Y$. Integral subvarieties always
exist in a formal neighborhood of a point (\cite{=00005BEk=00005D},
Proposition 3.2), but they need not be unique, even if they are global.
The foliation generated by the vector field $\partial/\partial y$
in $\mathbb{A}_{k}^{2}$ admits all the curves $x=a+by^{p}$ as integral
curves, and infinitely many such curves pass through any given point.
As another example, if $X$ is a simple abelian surface, then any
non-zero tangent vector at the origin generates a unique translation-invariant
line sub-bundle of $\mathcal{T}_X$. Often it can be shown to be $p$-closed, yielding a rank 1
foliation. This foliation has no integral curves at all, because an integral
curve would necessarily be an elliptic curve, contradicting the fact that
$X$ was assumed to be simple.

\subsubsection{The relation between foliations and inseparable morphisms of height
1}

Let $X$ be a non-singular $k$-variety. A finite $k$-morphism $X\overset{f}{\to}Y$
from $X$ to a $k$-variety $Y$ is called \emph{of height 1 }if there
is a $k$-morphism $Y\overset{g}{\to}X^{(p)}$ such that the composition
\[
X\overset{f}{\to}Y\overset{g}{\to}X^{(p)}
\]
is $Fr_{X/k}.$ Here $X^{(p)}=\Spec(k)\times_{\Phi_{k},\Spec(k)}X$.
If $f$ is also flat, then $Y$ is non-singular, since the property
of being regular descends under flat morphisms between locally Noetherian
schemes. In this case $f$ is faithfully flat, since it is surjective
on $k$-points, hence surjective. Therefore $g$ is also finite and
faithfully flat. Since $f^{(p)}\circ g\circ f=f^{(p)}\circ Fr_{X/k}=Fr_{Y/k}\circ f$
we get also $f^{(p)}\circ g=Fr_{Y/k}.$ The link between foliations
and height 1 morphisms is given by the following proposition.
\begin{prop}
\label{prop:=00005BEk=00005D,-Proposition-2.4.}\cite{=00005BEk=00005D},
Proposition 2.4. Let $X$ be a non-singular $k$-variety. There is
a natural 1-1 correspondence between finite flat height 1 morphisms
$f:X\to Y$ and height 1 foliations $\mathcal{E}\subset\mathcal{T}X.$
One has
\[
\deg(f)=p^{rk(\mathcal{E})}.
\]
Given $f$, $\Omega_{X/Y}$ is a locally free sheaf, hence the short
exact sequence
\[
f^{*}\Omega_{Y}\to\Omega_{X}\to\Omega_{X/Y}\to0
\]
splits and we let $\mathcal{E}=\Omega_{X/Y}^{\vee}\subset\Omega_{X}^{\vee}=\mathcal{T}X,$
a height 1 foliation. Conversely, given $\mathcal{E}$, we let $\mathcal{O}_{Y}=\mathcal{O}_{X}^{\mathcal{E}=0}$,
the sheaf of functions annihilated by the derivations in $\mathcal{E}.$
\end{prop}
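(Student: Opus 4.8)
The plan is to set up the two assignments in opposite directions and check they invert one another; the substance is entirely in passing from a rank-$r$ foliation $\mathcal{E}$ to its quotient $f\colon X\to Y$, so I begin there. This is local on $X$: near a closed point $x$ pick a frame $\xi_1,\dots,\xi_r$ of $\mathcal{E}$, and a regular system of parameters $t_1,\dots,t_n$ at $x$ dual to a basis of $(\mathfrak m_x/\mathfrak m_x^2)^{\vee}$ extending the reductions $\bar\xi_1,\dots,\bar\xi_r$; after rescaling the frame by the matrix $(\xi_i(t_j))_{1\le i,j\le r}$ (which is $\equiv\mathrm{id}$, hence invertible, and rescaling keeps it a frame of $\mathcal{E}$) one has $\xi_i(t_j)=\delta_{ij}$ for $i,j\le r$ and $\xi_i(t_j)\in\mathfrak m_x$ for $j>r$.

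The first and main step is to \emph{straighten} $\mathcal{E}$, i.e.\ to obtain coordinates in which $\xi_i=\partial/\partial t_i$ for $i\le r$. Evaluating the inclusions $[\xi_i,\xi_j]\in\mathcal{E}$ and $\xi_i^{(p)}\in\mathcal{E}$ — which encode exactly involutivity and $p$-closedness — on the $t_k$ with $k\le r$ forces $[\xi_i,\xi_j]=0$ and $\xi_i^{(p)}=0$. Because the $\xi_i$ then commute and are $p$-nilpotent, the operators $\exp(-t_i\xi_i):=\sum_{a=0}^{p-1}(-t_i)^a\xi_i^a/a!$ make sense ($a!$ being a unit for $a<p$), their composite $E=\exp(-t_r\xi_r)\cdots\exp(-t_1\xi_1)$ maps $\mathcal{O}_X$ into $\bigcap_i\ker\xi_i=:\mathcal{O}_Y$ and fixes $\mathcal{O}_Y$ pointwise, and $u_j:=E(t_j)$ satisfies $u_j\equiv t_j$ mod $\mathfrak m_x^2$ for $j>r$. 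In the regular system of parameters $(t_1,\dots,t_r,u_{r+1},\dots,u_n)$ one then has $\xi_i=\partial/\partial t_i$; since $k$ is perfect, $\mathcal{O}_{X,x}$ is free over $\mathcal{O}_{X,x}^p$ on the monomials of multidegree $<p$ in these parameters, and $\mathcal{O}_{Y,x}=\mathcal{O}_{X,x}^{\mathcal{E}=0}$ is precisely the $\mathcal{O}_{X,x}^p$-span of the monomials involving no $t_i$. Hence $\mathcal{O}_{X,x}$ is free of rank $p^r$ over the regular (by faithfully flat descent) ring $\mathcal{O}_{Y,x}$, with $\mathcal{O}_{X,x}^p\subseteq\mathcal{O}_{Y,x}$ and local normal form $\mathcal{O}_{X,x}=\mathcal{O}_{Y,x}[t_1,\dots,t_r]$, each $t_i^p\in\mathcal{O}_{Y,x}$. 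Globalising — $|X|\to|Y|$ is a homeomorphism since the fibres of $f$ are purely inseparable points, so $Y$ may be taken to have space $|X|$ and structure sheaf the sheaf of $k$-subalgebras $\mathcal{O}_X^{\mathcal{E}=0}$ — produces a finite flat morphism $f\colon X\to Y$ with $Y$ non-singular, $\deg f=p^r=p^{\rk(\mathcal{E})}$, the chain $\mathcal{O}_{X^{(p)}}\hookrightarrow\mathcal{O}_Y\hookrightarrow\mathcal{O}_X$ realising $f$ as of height $1$, and $\mathcal{E}=\Omega_{X/Y}^{\vee}$ read off from the normal form.

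For the converse, let $f\colon X\to Y$ be finite flat of height $1$; then $Y$ is non-singular (regularity descends along the faithfully flat $f$), and $\mathcal{E}:=\Omega_{X/Y}^{\vee}$ is the subsheaf of $\mathcal{T}X$ of $k$-derivations of $\mathcal{O}_X$ killing $\mathcal{O}_Y$, tautologically closed under bracket and under $\xi\mapsto\xi^{(p)}=\xi^p$. The only real point is that $\mathcal{E}$ is a subbundle, i.e.\ that $\Omega_{X/Y}$ is locally free, i.e.\ that $\mathcal{O}_X$ is locally $\mathcal{O}_Y[t_1,\dots,t_r]$ with $t_i^p\in\mathcal{O}_Y$. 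This I would extract from two inputs: $f$ is a local complete intersection, being a finite flat morphism between regular schemes (apply to $X\to Y\to\Spec k$ the fact that if $h\circ g$ and $h$ are local complete intersection morphisms then so is $g$); and $\mathcal{O}_X^p\subseteq\mathcal{O}_Y$. Together these force each fibre $\mathcal{O}_{X,x}\otimes\kappa(y)$ to be a complete intersection local Artinian $k$-algebra of length $\deg f$ (a power of $p$) with $\mathfrak m^{[p]}=0$, hence a truncated polynomial algebra $k[t_1,\dots,t_r]/(t_1^p,\dots,t_r^p)$; lifting the $t_i$ and applying Nakayama to the resulting map $\mathcal{O}_Y[t_1,\dots,t_r]/(t_i^p-a_i)\to\mathcal{O}_X$ of free $\mathcal{O}_Y$-modules of equal rank gives the normal form, so $\Omega_{X/Y}$ is free of rank $r$. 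A direct inspection of the two local normal forms then shows the constructions are mutually inverse — this is the geometric form of Jacobson's correspondence. The main obstacle is the straightening step in the second paragraph: it is precisely there that $p$-closedness, and not just involutivity, is indispensable (without it $\mathcal{O}_X^{\mathcal{E}=0}$ is too large and $\mathcal{O}_X$ fails to be finite free of rank $p^r$ over it, so one genuinely needs a commuting $p$-nilpotent frame before the operators $\exp(-t_i\xi_i)$ can be introduced); the remaining care — that the sheaf-theoretic quotient is a scheme and that flatness and the degree are unaffected by the localisations — is routine.
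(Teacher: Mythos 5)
The paper offers no proof of this proposition (it is quoted from Ekedahl), so your argument is judged on its own terms. Your direction from a foliation to its quotient is correct and is the classical straightening argument: after normalizing the frame so that $\xi_i(t_j)=\delta_{ij}$ for $i,j\le r$, evaluating $[\xi_i,\xi_j]\in\mathcal{E}$ and $\xi_i^{(p)}\in\mathcal{E}$ on the $t_k$ does force both to vanish, and the truncated Taylor operators produce the parameters $u_j$; you correctly use only that $E(t_j)$ is killed by every $\xi_i$ and is congruent to $t_j$ modulo $\mathfrak{m}_x^2$, which is just as well since $E$ is not a ring homomorphism.

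The converse direction has a genuine gap at its only substantive step. You assert that a fibre of $f$, being a complete intersection Artinian local $k$-algebra whose length is a power of $p$ and which satisfies $\mathfrak{m}^{[p]}=0$, must be a truncated polynomial algebra $k[t_1,\dots,t_r]/(t_1^p,\dots,t_r^p)$. That implication is false. For $p=5$ take $A=k[x,y]/(xy,\,x^2+y^3)$: it is a complete intersection (two equations in two variables, zero-dimensional), its length is $5$ (a $k$-basis is $1,x,y,y^2,x^2$, with $y^3=-x^2$), and $\mathfrak{m}^{[5]}=0$ because $x^3=-(xy)y^2=0$ and $y^4=-(xy)x=0$; yet $A$ is not $k[t]/(t^5)$, since its embedding dimension is $2$ and every element of its maximal ideal has vanishing fourth power. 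So the three properties you isolate do not determine the fibre, and the Nakayama step never starts. What you actually need is the local normal form $\mathcal{O}_{X,x}=\mathcal{O}_{Y,y}[t_1,\dots,t_r]$ with $t_i^p\in\mathcal{O}_{Y,y}$ (equivalently, a $p$-basis of $\mathcal{O}_{X,x}$ over $\mathcal{O}_{Y,y}$); deducing this from ``finite, flat, height $1$, both local rings regular'' is precisely the content of Ekedahl's Proposition 2.4 and requires an argument of a different nature (for instance one along the lines of the Kimura--Niitsuma resolution of Kunz's conjecture on $p$-bases). The remaining points of your converse --- descent of regularity, the $p$-Lie algebra properties of $\Omega_{X/Y}^{\vee}$, and the mutual-inverse check once the normal forms are available --- are fine.
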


The fact that $\mathcal{E}$ is a sub-bundle, and not only a saturated
subsheaf, is essential. Consider, for example, the subsheaf $\mathcal{E}$
of $\mathcal{T}\mathbb{A}^{2}$ generated by $\xi=x\cdot\partial/\partial x+y\cdot\partial/\partial y,$
which is saturated, but fails to be a sub-bundle at the origin. The
quotient of $\mathbb{A}^{2}$ by $\mathcal{E}$ is the scheme $Y$
for which $\mathcal{O}_{Y}$ is the sheaf of functions $h$ on $\mathbb{A}^{2}$
satisfying $\xi(h)=0$. This $Y$ is singular at the origin, and the
quotient map is not flat. Note that $h=x^{i}y^{p-i}$ is such a function
for $0\le i\le p$.

Let $f:X\to Y$ be as in the proposition, $x\in X$ and $y=f(x).$
Then one has the following local description of the completed local
rings.
\begin{prop}
\cite{=00005BEk=00005D}, Proposition 3.2. There is a system of formal
parameters $t_{1},\dots,t_{n}$ at $x$ such that $t_{1}^{p},\dots,t_{r}^{p},t_{r+1},\dots t_{n}$
is a system of formal parameters at the point $y$. In a formal neighborhood
of $x$ the foliation $\mathcal{E}$ is generated by $\partial/\partial t_{i}$
for $1\le i\le r.$
\end{prop}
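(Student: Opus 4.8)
The plan is to pass to formal completions at $x$ and $y$ and to reduce the whole statement to a single structural fact about the fibre algebra of $f$, so that no direct appeal to a characteristic‑$p$ Frobenius/flow‑box theorem is needed.

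\textbf{Reduction.} I would write $A=\widehat{\mathcal{O}}_{X,x}\cong k[[x_1,\dots,x_n]]$ and keep the symbol $\mathcal{E}$ for its stalk at $x$: a free $A$-module of rank $r=\rk(\mathcal{E})$, a direct summand of $\mathrm{Der}_k(A)$, closed under bracket and $p$-th power. By the preceding proposition $B:=A^{\mathcal{E}}=\widehat{\mathcal{O}}_{Y,y}$; since $f$ is flat, $Y$ and hence $B$ are regular, $A$ is finite free over $B$ of rank $\deg(f)=p^{r}$, and $\Omega_{A/B}$ is free of rank $r$ (as $\mathcal{E}=\Omega_{X/Y}^{\vee}$ and $\Omega_{X/Y}$ is locally free there). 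As any derivation kills $p$-th powers, $A^{p}\subseteq B\subseteq A$. The claim is then reduced to producing a regular system of parameters $t_1,\dots,t_n$ of $A$ with $B=k[[t_1^{p},\dots,t_r^{p},t_{r+1},\dots,t_n]]$: once that is in hand, a $B$-derivation of $A$ is precisely a derivation annihilating $t_{r+1},\dots,t_n$ (it automatically kills the $t_i^{p}$), so $\mathcal{E}=\mathrm{Der}_B(A)=\bigoplus_{i=1}^{r}A\,\partial/\partial t_i$, and $t_1^{p},\dots,t_r^{p},t_{r+1},\dots,t_n$ is a regular system of parameters of $B$, which is exactly the assertion.

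\textbf{The fibre algebra and the choice of parameters.} I would then study $\bar{A}:=A\otimes_B k=A/\mathfrak{m}_B A$, a local Artinian $k$-algebra. Freeness of $A$ over $B$ gives $\dim_k\bar{A}=p^{r}$, and base change of Kähler differentials gives $\dim_k\mathfrak{m}_{\bar{A}}/\mathfrak{m}_{\bar{A}}^{2}=\dim_k(\Omega_{A/B}\otimes_A k)=r$. Every element of $\mathfrak{m}_{\bar{A}}$ is $p$-nilpotent: a lift $z\in\mathfrak{m}_A$ has $z^{p}\in A^{p}\cap\mathfrak{m}_A\subseteq B\cap\mathfrak{m}_A=\mathfrak{m}_B\subseteq\mathfrak{m}_B A$. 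Hence a minimal generating set $\bar{z}_1,\dots,\bar{z}_r$ of $\mathfrak{m}_{\bar{A}}$ yields a surjection $k[z_1,\dots,z_r]/(z_1^{p},\dots,z_r^{p})\twoheadrightarrow\bar{A}$ of $k$-algebras, which must be an isomorphism since both have $k$-dimension $p^{r}$. I would lift the $\bar{z}_i$ to $t_i\in\mathfrak{m}_A$ ($1\le i\le r$); the image of $\mathfrak{m}_B$ in $\mathfrak{m}_A/\mathfrak{m}_A^{2}$ has dimension $n-r$ (because $\mathfrak{m}_A/(\mathfrak{m}_A^{2}+\mathfrak{m}_B A)\cong\mathfrak{m}_{\bar{A}}/\mathfrak{m}_{\bar{A}}^{2}$ has dimension $r$ and $\mathfrak{m}_B A\subseteq\mathfrak{m}_B+\mathfrak{m}_A^{2}$), so I would pick $t_{r+1},\dots,t_n\in\mathfrak{m}_B$ lifting a basis of it. Then $\bar{t}_1,\dots,\bar{t}_n$ is a basis of $\mathfrak{m}_A/\mathfrak{m}_A^{2}$, so $t_1,\dots,t_n$ is a regular system of parameters of $A$. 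Finally $C:=k[[t_1^{p},\dots,t_r^{p},t_{r+1},\dots,t_n]]$ is a regular subring of $A$ over which $A$ is free of rank $p^{r}$ (with basis the monomials $t_1^{a_1}\cdots t_r^{a_r}$, $0\le a_i<p$), and $C\subseteq B$ since $t_i^{p}\in A^{p}\subseteq B$ and $t_j\in\mathfrak{m}_B\subseteq B$; comparing $[A:C]=p^{r}=[A:B]$ forces $C=B$.

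\textbf{What the work is.} Granting the preceding proposition, the rest is bookkeeping, and the only step that carries real content is the identification $\bar{A}\cong k[z_1,\dots,z_r]/(z_1^{p},\dots,z_r^{p})$. That identification rests on precisely two inputs: $\dim_k\bar{A}=p^{r}$, which is flatness of $f$; and $\dim_k\mathfrak{m}_{\bar{A}}/\mathfrak{m}_{\bar{A}}^{2}=r$, which is local freeness of $\Omega_{X/Y}$ — equivalently, that $\mathcal{E}$ is a genuine sub-bundle rather than a merely saturated $p$-closed subsheaf (without this, $\bar{A}$ need not be the standard complete intersection and the normal form fails). Thus the classical approach — constructing in a formal neighbourhood a commuting, $p$-nilpotent $A$-frame of $\mathcal{E}$ in the spirit of Jacobson's theory of restricted Lie algebras of derivations, and then integrating it — can be avoided: that content has already been packed into the statement of the preceding proposition, which is exactly what makes the fibre algebra rigid enough to read off the parameters directly.
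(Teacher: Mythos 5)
The paper does not prove this statement: it is quoted verbatim from Ekedahl's Proposition 3.2, so there is no in-text argument to compare yours against. Your proof is correct and is a genuinely self-contained derivation from the preceding proposition (the correspondence $f\leftrightarrow\mathcal{E}$, with $A$ finite free of rank $p^{r}$ over $B=A^{\mathcal{E}}$ and $\Omega_{A/B}$ free of rank $r$). The pivot --- identifying the fibre algebra $\bar{A}$ with $k[z_{1},\dots,z_{r}]/(z_{1}^{p},\dots,z_{r}^{p})$ from the two numerical inputs $\dim_{k}\bar{A}=p^{r}$ and $\dim_{k}\mathfrak{m}_{\bar{A}}/\mathfrak{m}_{\bar{A}}^{2}=r$, then lifting its generators and completing them by elements of $\mathfrak{m}_{B}$ --- is clean and does avoid the classical route (normalizing a commuting, $p$-closed frame of $\mathcal{E}$ \`a la Jacobson and integrating one variable at a time), exactly as you say; the price is that all the hard content is delegated to the preceding proposition, which the paper also quotes without proof. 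Two spots deserve one more line each, though neither is a gap. First, the concluding step ``$[A:C]=p^{r}=[A:B]$ forces $C=B$'' is not a purely numerical implication: pass to fraction fields to get $[\mathrm{Frac}(B):\mathrm{Frac}(C)]=1$, then use that $B$ is integral over $C$ and that $C$, being regular, is normal, whence $B\subseteq C$. Second, the inclusion $C\subseteq B$ needs $B$ to be closed in $A$ for the $\mathfrak{m}_{A}$-adic topology (immediate because $A$ is finite over the complete local ring $B$, so the $\mathfrak{m}_{A}$-adic and $\mathfrak{m}_{B}$-adic topologies on $B$ are equivalent), so that power series in $t_{1}^{p},\dots,t_{r}^{p},t_{r+1},\dots,t_{n}$, and not merely their polynomial truncations, land in $B$.
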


\section{The foliation over the ordinary locus}

\subsection{Definition and first properties\label{subsec:Definition-and-first foliation}}

Let notation be as in \S\ref{subsec:Unitary-Shimura-varieties}. Over
$S^{\ord}$ the fibers of the subsheaf
\[
\mathcal{P}_{0}=\mathcal{P}[V]=\ker(V_{\mathcal{P}}:\mathcal{P}\to\mathcal{Q}^{(p)})
\]
have constant rank $n-m,$ hence, as $S^{\ord}$ is reduced, $\mathcal{P}_{0}$
is a sub-bundle of $\mathcal{P}.$ The sub-bundle $KS(\mathcal{P}_{0}\otimes\mathcal{Q})$
of $\Omega_{S}^{1}$ has accordingly rank $(n-m)m.$ We define a rank-$m^{2}$
sub-bundle $\mathcal{T}S^{+}\subset\mathcal{T}S^{\ord}$ by
\[
\mathcal{T}S^{+}=KS(\mathcal{P}_{0}\otimes\mathcal{Q})^{\perp}.
\]
\begin{prop}
\label{prop:Being a foliation}$\mathcal{T}S^{+}$ is a foliation
of height 1.
\end{prop}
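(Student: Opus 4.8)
The plan is to verify the two defining properties of a height-1 foliation---involutivity and $p$-closedness---by a local computation using the Gauss--Manin connection and the notion of $p$-curvature. The key observation is that $\mathcal{T}S^{+}$ is, by definition, the annihilator of $KS(\mathcal{P}_{0}\otimes\mathcal{Q})\subset\Omega_{S}^{1}$, so both conditions on $\mathcal{T}S^{+}$ translate, via duality and the Cartan formula, into statements about the $\mathcal{O}_{S^{\ord}}$-submodule $KS(\mathcal{P}_{0}\otimes\mathcal{Q})$ of $1$-forms. Concretely, a rank-$r$ sub-bundle $\mathcal{E}\subset\mathcal{T}X$ is involutive if and only if its annihilator $\mathcal{E}^{\perp}\subset\Omega_{X}^{1}$ satisfies $d(\mathcal{E}^{\perp})\subset \mathcal{E}^{\perp}\wedge\Omega_{X}^{1}$, and $\mathcal{E}$ is $p$-closed if and only if in addition $C(\mathcal{E}^{\perp})\subset\mathcal{E}^{\perp}$, where $C$ is the Cartier operator (or, dually, the condition that the $p$-curvature of the sub-bundle vanishes on the appropriate piece). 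So the first step is to reduce Proposition~\ref{prop:Being a foliation} to showing that $\omega:=KS(\mathcal{P}_{0}\otimes\mathcal{Q})$ is closed under $d$ modulo itself, and compatible with Cartier.

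The second step is to identify $KS(\mathcal{P}_{0}\otimes\mathcal{Q})$ intrinsically inside $\Omega_{S^{\ord}}^{1}$ in terms of the Gauss--Manin connection $\nabla$ on $H_{dR}^{1}(\mathcal{A}/S)$. Recall $KS$ is obtained from $\nabla$ by restricting to $\omega_{\mathcal{A}/S}\subset H_{dR}^{1}$ and projecting modulo $\omega_{\mathcal{A}/S}$; after using the polarization to identify the $\overline\Sigma$-quotient with $\mathcal{Q}^{\vee}$, the sub-bundle $KS(\mathcal{P}_{0}\otimes\mathcal{Q})$ is exactly the set of $1$-forms $\theta$ such that $\nabla$ pairs a lift of $\mathcal{P}_{0}$ into $\theta\otimes(\text{something in }\omega)$---more precisely it is $\langle \nabla(\mathcal P_0),\ \mathcal Q_{dR}\rangle$ under the de Rham pairing, where $\mathcal Q_{dR}\subset H^1_{dR}(\mathcal A/\mathcal S)(\overline\Sigma)$ is the Hodge piece. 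The crucial input will be that $\mathcal{P}_{0}=\mathcal{P}[V]=F(H_{dR}^{1}(\mathcal{A}^{(p)}/S))\cap\mathcal P$, i.e. $\mathcal P_0$ is the $\Sigma$-part of the image of Frobenius $F$ intersected with the Hodge filtration. This is what will let us bring in $p$-curvature.

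The third and central step---which I expect to be the main obstacle---is the $p$-closedness. Here one uses the fundamental fact (Katz) that the $p$-curvature $\psi$ of the Gauss--Manin connection $\nabla$ on $H_{dR}^{1}(\mathcal{A}/S)$ is given, up to the obvious twists, by the composite $V\circ F$ (equivalently, its kernel is the image of Frobenius $F$, and its image is $\omega_{\mathcal A/S}$); in particular $\nabla$ restricted to $\mathrm{Im}(F)$ is nilpotent in the relevant sense, and the Hodge filtration interacts with $\nabla$ through $V$ and $F$ in a controlled way. Because $\mathcal P_0$ lies inside $\mathrm{Im}(F)$, differentiating a section of $\mathcal P_0$ twice (the operation underlying $\xi^{(p)}$) lands back inside a subspace we control, and the pairing against $\mathcal Q$ that computes $KS(\mathcal P_0\otimes\mathcal Q)$ then shows $\omega$ is stable under the Cartier operator. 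Making this precise requires carefully tracking the $(p)$-twists, the types $\Sigma/\overline\Sigma$, and the compatibility of the de Rham pairing with $F$, $V$ and $\nabla$ (the formula $\{F u,v\}=\{u,Vv\}^{(p)}$ and horizontality of the pairing). The bookkeeping is what makes this a "pleasant exercise" rather than a triviality.

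For involutivity (which is genuinely easier), the plan is to observe that $KS(\mathcal P_0\otimes\mathcal Q)$ is generated by $1$-forms of the shape $\langle\nabla s,t\rangle$ with $s$ a local section of $\mathcal P_0$ and $t$ a local section of $\mathcal Q_{dR}$, and to compute $d\langle\nabla s,t\rangle$ using the flatness of $\nabla$ (curvature zero) together with horizontality of the de Rham pairing; the resulting $2$-form is expressed through $\nabla$ applied to $s$ and $t$ again, and landing in $\omega\wedge\Omega^1$ reduces to the same Hodge-theoretic input (that $\nabla(\mathcal P_0)$ and $\nabla(\mathcal Q_{dR})$ stay within the subbundle spanned by $\omega_{\mathcal A/S}$ and $\mathrm{Im}(F)$ modulo $\omega$). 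Thus both properties come from a single structural fact: the Hodge filtration pieces $\mathcal P_0$ and $\mathcal Q_{dR}$ together with $\nabla$, $F$, $V$ form a "small" $\nabla$-stable package over $S^{\ord}$, and $KS(\mathcal P_0\otimes\mathcal Q)$ is the exterior reflection of that package. I would present the argument by first recording the needed identities for $\nabla$, $F$, $V$ and the pairings as a lemma, then deducing $d$-closedness modulo $\omega$ and Cartier-stability in two short paragraphs.
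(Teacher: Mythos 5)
Your proposal rests on exactly the same key inputs as the paper's proof: the identity $\left\langle KS(\alpha\otimes\beta),\xi\right\rangle =\{\nabla_{\xi}\alpha,\beta\}$ relating the Kodaira--Spencer map to the Gauss--Manin connection and the de~Rham pairing, the integrability of $\nabla$ for involutivity, and the fact that $\mathcal{P}_{0}\subset H_{dR}^{1}(\mathcal{A}/S)[V]=\mathrm{Im}(F)$ together with (the easy direction of) Cartier's theorem for $p$-closedness. The difference is one of packaging, and the paper's packaging is noticeably cleaner: instead of working dually inside $\Omega_{S}^{1}$ with the subsheaf $KS(\mathcal{P}_{0}\otimes\mathcal{Q})$, the paper first proves the pointed corollary that $\xi\in\mathcal{T}S^{+}$ if and only if $\nabla_{\xi}(\mathcal{P}_{0})\subset\mathcal{P}_{0}$ (using orthogonality to all of $\omega_{\mathcal{A}^{t}/S}$ by type reasons, plus the fact that $\nabla_{\xi}$ preserves the $\Sigma$-part and $H_{dR}^{1}[V]$). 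With that characterization, involutivity is immediate from $\nabla_{[\xi,\eta]}=\nabla_{\xi}\nabla_{\eta}-\nabla_{\eta}\nabla_{\xi}$, and $p$-closedness is immediate from the vanishing of the $p$-curvature $\psi(\xi)=\nabla_{\xi^{(p)}}-\nabla_{\xi}^{(p)}$ on $\mathrm{Im}(F)$, with no twist-chasing at all; your route forces you to track the $(p)$-twists and types explicitly, which is where you correctly anticipate the labor. One caution about your first step: the reduction of $p$-closedness to ``$C(\mathcal{E}^{\perp})\subset\mathcal{E}^{\perp}$'' is stated too loosely, since the Cartier operator is only defined on closed forms and the generators of $KS(\mathcal{P}_{0}\otimes\mathcal{Q})$ need not be closed (compare the example $\xi=x\partial_{x}+\partial_{y}$, where the annihilating form $dx-x\,dy$ is not closed); the correct dual criterion is more delicate, so you are better off discarding that framing and arguing directly with the $p$-curvature of $\nabla$, as your third step in fact does.
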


The following two results will be used in the proof of the Proposition.
In the next lemma we use the identification
\[
\phi^{*}:\omega_{\mathcal{A}^{t}/S}(\Sigma)\simeq\omega_{\mathcal{A}/S}(\overline{\Sigma})=\mathcal{Q}
\]
induced by the polarization. Note that $\phi^{*}$ is type-reversing
because the Rosati involution induced by $\phi$ is $\iota(a)\mapsto\iota(\overline{a}).$
By $\nabla_{\xi}\alpha$ we denote, as usual, the contraction of $\nabla\alpha$
with the tangent vector $\xi$.
\begin{lem}
Let $\alpha\in\mathcal{P}=\omega_{\mathcal{A}/S}(\Sigma)$ and $\beta\in\mathcal{Q}\simeq\omega_{\mathcal{A}^{t}/S}(\Sigma).$
Denote by $\nabla$ the Gauss-Manin connection and by
\[
\{,\}:H_{dR}^{1}(\mathcal{A}/S)\times H_{dR}^{1}(\mathcal{A}^{t}/S)\to\mathcal{O}_{S}
\]
the canonical pairing in de Rham cohomology. Then for $\xi\in\mathcal{T}S$
we have
\[
\left\langle KS(\alpha\otimes\beta),\xi\right\rangle =\left\{ \nabla_{\xi}\alpha,\beta\right\} .
\]
\end{lem}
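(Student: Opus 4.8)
The plan is to prove the identity by unwinding the definition of the Kodaira--Spencer map; beyond its mere definition, no property of the Gauss--Manin connection enters (in particular not the horizontality of $\{,\}$), so the statement would stay true with $\nabla$ replaced by any connection on $H^1_{dR}(\mathcal{A}/S)$. Write $q\colon H^1_{dR}(\mathcal{A}/S)\twoheadrightarrow R^1\pi_*\mathcal{O}_{\mathcal{A}}=\omega^\vee_{\mathcal{A}^t/S}$ for the projection in the Hodge filtration. Since $\nabla$ commutes with $\iota(\mathcal{O}_E)$, it respects the $\Sigma$/$\overline\Sigma$-decomposition, so $q(\nabla_\xi\alpha)$ lies in $\omega^\vee_{\mathcal{A}^t/S}(\Sigma)$. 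By definition $\langle KS(\alpha),\xi\rangle=q(\nabla_\xi\alpha)$, and the homomorphism $KS\colon\mathcal{P}\otimes\mathcal{Q}\to\Omega^1_S$ of the statement is obtained from this by transporting through $(\phi^{*\vee})^{-1}\colon\omega^\vee_{\mathcal{A}^t/S}(\Sigma)\overset{\sim}{\to}\mathcal{Q}^\vee$ and pairing with $\beta\in\mathcal{Q}$. Thus the first step is merely to record that
\[
\langle KS(\alpha\otimes\beta),\xi\rangle=\big\langle(\phi^{*\vee})^{-1}q(\nabla_\xi\alpha),\ \beta\big\rangle,
\]
the bracket on the right being the tautological pairing $\mathcal{Q}^\vee\times\mathcal{Q}\to\mathcal{O}_S$.

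The second step translates the right-hand side back into $\{,\}$, using two facts, both contained in \S\ref{subsec:Unitary-Shimura-varieties}. First, since $\omega_{\mathcal{A}/S}$ and $\omega_{\mathcal{A}^t/S}$ are mutual annihilators under $\{,\}$, the pairing descends to the perfect pairing $\omega^\vee_{\mathcal{A}^t/S}\times\omega_{\mathcal{A}^t/S}\to\mathcal{O}_S$ realizing the identification $R^1\pi_*\mathcal{O}_{\mathcal{A}}=\omega^\vee_{\mathcal{A}^t/S}$; concretely $\langle q(u),\delta\rangle=\{u,\delta\}$ for $u\in H^1_{dR}(\mathcal{A}/S)$ and $\delta\in\omega_{\mathcal{A}^t/S}$. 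Second, $\phi^{*\vee}$ is the transpose, with respect to these duality pairings, of the isomorphism $\phi^*\colon\omega_{\mathcal{A}^t/S}(\Sigma)\overset{\sim}{\to}\omega_{\mathcal{A}/S}(\overline\Sigma)=\mathcal{Q}$. This second fact is the only point of the proof that is not an outright triviality: it is deduced from the functoriality $\{\phi^*w,\delta\}_{\mathcal{A}}=\{w,(\phi^t)^*\delta\}_{\mathcal{A}^t}$ of the de Rham pairing together with $\phi=\phi^t$, which exhibits the functional on $\omega_{\mathcal{A}^t/S}$ attached to $q(\phi^*w)$ as the one attached to $w$ precomposed with $\phi^*|_\omega$.

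Granting these, the chase closes at once:
\[
\big\langle(\phi^{*\vee})^{-1}q(\nabla_\xi\alpha),\ \beta\big\rangle=\big\langle q(\nabla_\xi\alpha),\ (\phi^*)^{-1}\beta\big\rangle=\{\nabla_\xi\alpha,\ (\phi^*)^{-1}\beta\},
\]
and $(\phi^*)^{-1}\beta\in\omega_{\mathcal{A}^t/S}(\Sigma)$ is exactly the element of $H^1_{dR}(\mathcal{A}^t/S)$ that the statement denotes by $\beta$ under the identification $\mathcal{Q}\simeq\omega_{\mathcal{A}^t/S}(\Sigma)$. Everything in sight is a morphism of coherent sheaves, so it suffices to argue on local sections throughout, and I do not expect any genuine obstacle: the whole content is to compose the three identifications ($q$, the $\{,\}$-duality, and $\phi^{*\vee}$) in the right order, and the only thing demanding attention is keeping the $\Sigma$- and $\overline\Sigma$-isotypic components apart and checking that the arrows of $\phi^*$ and of $\phi^{*\vee}$ point in the directions the bookkeeping needs.
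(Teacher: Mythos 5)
Your proof is correct and is exactly the paper's argument, which simply states that the identity "follows immediately from the definitions" of $KS$, of the identification $R^{1}\pi_{*}\mathcal{O}_{\mathcal{A}}=\omega_{\mathcal{A}^{t}/S}^{\vee}$ via $\{,\}$ and the mutual-annihilator property, and of $\phi^{*\vee}$. You have merely written out that unwinding in full, with the type bookkeeping handled correctly.
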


\begin{proof}
The lemma follows immediately from the definitions. Note that the
identification $R^{1}\pi_{*}\mathcal{O}_{\mathcal{A}}=\omega_{\mathcal{A}^{t}/S}^{\vee}$,
used in the definition of $KS$, results from the perfect pairing
$\{,\}$ and from the fact that under this pairing $\omega_{\mathcal{A}/S}$
and $\omega_{\mathcal{A}^{t}/S}$ are exact annihilators of each other.
\end{proof}
\begin{cor}
\label{cor: characterization of TS+}$\xi\in\mathcal{T}S^{+}$ if
and only if $\nabla_{\xi}(\mathcal{P}_{0})\subset\mathcal{P}_{0}.$
\end{cor}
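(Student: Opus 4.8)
The plan is to unwind the definition $\mathcal{T}S^{+}=KS(\mathcal{P}_{0}\otimes\mathcal{Q})^{\perp}$ via the preceding Lemma, and then to use the horizontality of Frobenius on de Rham cohomology to sharpen the containment one obtains. Throughout one works over $S^{\ord}$, where $\mathcal{P}_{0}$ is a sub-bundle.

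First I would combine the Lemma, $\langle KS(\alpha\otimes\beta),\xi\rangle=\{\nabla_{\xi}\alpha,\beta\}$, with the fact that $\mathcal{T}S^{+}$ is by definition the annihilator of the image of $\mathcal{P}_{0}\otimes\mathcal{Q}$ under $KS$: thus $\xi\in\mathcal{T}S^{+}$ if and only if $\{\nabla_{\xi}\alpha,\beta\}=0$ for all $\alpha\in\mathcal{P}_{0}$ and all $\beta\in\mathcal{Q}\simeq\omega_{\mathcal{A}^{t}/S}(\Sigma)$. Since $\iota(\mathcal{O}_{E})$ acts by $S$-endomorphisms it is horizontal for $\nabla$, so $\nabla_{\xi}$ preserves $\Sigma$-isotypic parts and $\nabla_{\xi}\alpha\in H_{dR}^{1}(\mathcal{A}/S)(\Sigma)$. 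Because $p$ is inert, the perfect pairing $\{,\}$ respects types and restricts to a perfect pairing between $H_{dR}^{1}(\mathcal{A}/S)(\Sigma)$ and $H_{dR}^{1}(\mathcal{A}^{t}/S)(\Sigma)$; and since $\omega_{\mathcal{A}/S}$ and $\omega_{\mathcal{A}^{t}/S}$ are mutual annihilators, a rank count shows that $\mathcal{P}=\omega_{\mathcal{A}/S}(\Sigma)$ is exactly the annihilator of $\omega_{\mathcal{A}^{t}/S}(\Sigma)$ inside $H_{dR}^{1}(\mathcal{A}/S)(\Sigma)$. Hence the condition above is equivalent to $\nabla_{\xi}(\mathcal{P}_{0})\subseteq\mathcal{P}$, which already gives the ``if'' implication since $\mathcal{P}_{0}\subseteq\mathcal{P}$.

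It remains to upgrade $\mathcal{P}$ to $\mathcal{P}_{0}$ in the ``only if'' direction. For this I would use $\mathcal{P}_{0}=\mathcal{P}[V]=\mathcal{P}\cap H_{dR}^{1}(\mathcal{A}/S)[V]=\mathcal{P}\cap\mathrm{Im}(F)$, the last equality being the fact recalled in \S\ref{subsec:Frobenius,-Verschiebung-and} that $\mathrm{Im}(F)=H_{dR}^{1}(\mathcal{A}/S)[V]$, together with the observation that $\mathrm{Im}(F)$ is stable under the Gauss-Manin connection. The latter holds because $Fr_{\mathcal{A}/S}:\mathcal{A}\to\mathcal{A}^{(p)}$ is a morphism of abelian schemes over $S$, so by functoriality of $\nabla$ the induced map $F$ is horizontal, whence $\mathrm{Im}(F)$ is a $\nabla$-stable subsheaf; it is moreover a sub-bundle, and intersecting with the $\nabla$-stable $H_{dR}^{1}(\mathcal{A}/S)(\Sigma)$ preserves $\nabla$-stability. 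Therefore, for \emph{every} $\xi\in\mathcal{T}S$ one has $\nabla_{\xi}(\mathcal{P}_{0})\subseteq\nabla_{\xi}(\mathrm{Im}(F))\subseteq\mathrm{Im}(F)$; combined with $\nabla_{\xi}(\mathcal{P}_{0})\subseteq\mathcal{P}$ for $\xi\in\mathcal{T}S^{+}$ from the previous paragraph, this gives $\nabla_{\xi}(\mathcal{P}_{0})\subseteq\mathcal{P}\cap\mathrm{Im}(F)=\mathcal{P}_{0}$, as wanted. The only genuinely substantive point, I expect, is this $\nabla$-stability of $\mathrm{Im}(F)$ (equivalently the horizontality of $F$), which is exactly what legitimizes $\mathcal{P}_{0}$ rather than merely $\mathcal{P}$ on the right-hand side; the rest is formal bookkeeping with $KS$, the de Rham pairing, and the $\Sigma/\overline{\Sigma}$ decomposition of \S\ref{subsec:Unitary-Shimura-varieties}, where one should only take care that the identification $\mathcal{Q}\simeq\omega_{\mathcal{A}^{t}/S}(\Sigma)$ used in the Lemma is the polarization-induced one.
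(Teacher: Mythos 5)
Your proposal is correct and follows essentially the same route as the paper: both reduce, via the Lemma and the type decomposition of the de Rham pairing, to showing $\nabla_{\xi}(\mathcal{P}_{0})\subset\mathcal{P}$, and then cut down to $\mathcal{P}_{0}$ using the $\nabla$-stability of $H_{dR}^{1}(\mathcal{A}/S)[V]=\mathrm{Im}(F)$, which is exactly the paper's appeal to the Gauss-Manin connection commuting with isogenies. The only cosmetic difference is that you work inside the $\Sigma$-part from the outset and phrase the key stability in terms of $\mathrm{Im}(F)$ rather than $\ker(V)$; these are the same sub-bundle.
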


\begin{proof}
If $\xi\in\mathcal{T}S^{+}$ then by the lemma $\nabla_{\xi}(\mathcal{P}_{0})$
is orthogonal under the pairing $\{,\}$ to $\mathcal{Q}\simeq\omega_{\mathcal{A}^{t}/S}(\Sigma).$
It is also orthogonal to $\omega_{\mathcal{A}^{t}/S}(\overline{\Sigma})$
for reasons of type. It is therefore orthogonal to the whole of $\omega_{\mathcal{A}^{t}/S},$
so $\nabla_{\xi}(\mathcal{P}_{0})\subset\omega_{\mathcal{A}/S}.$
But the Gauss-Manin connection commutes with isogenies and endomorphisms,
so $\nabla_{\xi}$ preserves the subspaces $H_{dR}^{1}(\mathcal{A}/S)(\Sigma)$
and $H_{dR}^{1}(\mathcal{A}/S)[V]$. It follows that
\[
\nabla_{\xi}(\mathcal{P}_{0})\subset\omega_{\mathcal{A}/S}(\Sigma)[V]=\mathcal{P}_{0}.
\]
The converse is obvious.
\end{proof}
We can now prove the proposition.
\begin{proof}
The involutivity of $\mathcal{T}S^{+}$ follows from the corollary,
since the Gauss-Manin connection is integrable, i.e.
\[
\nabla_{[\xi,\eta]}=\nabla_{\xi}\circ\nabla_{\eta}-\nabla_{\eta}\circ\nabla_{\xi}.
\]
The fact that $\mathcal{T}S^{+}$ is closed under $\xi\mapsto\xi^{(p)}$
is more subtle as the $p$-curvature
\[
\psi(\xi):=\nabla_{\xi^{(p)}}-\nabla_{\xi}^{(p)}
\]
does not vanish identically, but is only a nilpotent endomorphism
of $H_{dR}^{1}(\mathcal{A}/S)$ (\cite{=00005BKa=00005D}, Theorem
5.10). However, on the sub-module with connection $H_{dR}^{1}(\mathcal{A}/S)[V]$
the $p$-curvature vanishes. This is because the kernel of $V$ is
the image of $F$, so (the easy direction of) Cartier's theorem (\cite{=00005BKa=00005D},
Theorem 5.1) implies that $\psi(\xi)=0$ there. Since $\mathcal{P}_{0}\subset H_{dR}^{1}(\mathcal{A}/S)[V]$
we can conclude the proof as before, using the corollary.
\end{proof}

\subsection{\label{subsec:S_0(p) and the foliation}The Shimura variety of parahoric
level structure}

By Proposition \ref{prop:=00005BEk=00005D,-Proposition-2.4.}, the
height 1 foliation $\mathcal{T}S^{+}$ on $S^{\ord}$ corresponds
to a finite flat purely inseparable quotient of $S^{\ord}.$ Our purpose
in this section is to identify this quotient as the ordinary-\'etale
component of (the special fiber of) a certain Shimura variety of parahoric
level structure. This will allow us in \S\ref{sec:Extending-the-ordinary}
to extend the foliation to the non-ordinary locus.

\subsubsection{The Shimura variety $S_{K_{0}(p)}$}

In addition to the lattice $\Lambda=\mathcal{O}_{E}^{n+m}$ considered
in \S\ref{subsec:Unitary-Shimura-varieties}, consider also the $\mathcal{O}_{E}$-lattices
\[
\Lambda\supset\Lambda'\supset\Lambda''\supset p\Lambda
\]
where 
\[
\Lambda'=\left\langle pe_{1},\dots pe_{m},e_{m+1},\dots,e_{n+m}\right\rangle ,\,\,\,\,\,\,\Lambda''=\left\langle pe_{1},\dots pe_{n},e_{n+1},\dots,e_{n+m}\right\rangle .
\]
Note that the dual of $\Lambda'$ is $p^{-1}\Lambda''.$ Let $\mathscr{L}$
be the lattice chain in $K_{p}^{n+m}$
\[
\cdots\supset\Lambda_{p}\supset\Lambda_{p}'\supset\Lambda_{p}''\supset p\Lambda_{p}\supset\cdots
\]
obtained by tensoring with $\mathbb{Z}_{p}$ and extending by periodicity,
and let $K_{0}(p)_{p}$ be its stabilizer in $K_{p}=\boldsymbol{G}(\mathbb{Z}_{p}).$
This is a parahoric subgroup, and if we let $K_{0}(p)$ be the ad\`elic
level subgroup corresponding to it (and to full level $N$ as usual),
we get the Shimura variety $S_{K_{0}(p)}$, which is again defined
over $E$, and is an \'etale cover of $S_{K}$.

\subsubsection{The moduli problem $\mathcal{S}_{0}(p)$}

Let $\mathcal{S}_{0}(p)$ be the integral model of $S_{K_{0}(p)}$
over $\mathcal{O}_{E,(p)}$ which was constructed by Rapoport and
Zink in \cite{=00005BRa-Zi=00005D}, \S6.9. We want to give a more
concrete description of the moduli problem parametrized by $\mathcal{S}_{0}(p)$.
Let $R$ be an $\mathcal{O}_{E,(p)}$-algebra and $\underline{A}\in\mathcal{S}(R)$
as in \S\ref{subsec:Unitary-Shimura-varieties}. A finite flat $\mathcal{O}_{E}$-subgroup
scheme $H\subset A[p]$ is called \emph{Raynaud} if for every characteristic
$p$ geometric point $x:R\to k$ of $\Spec(R),$ the Dieudonn\'e module
$M(H_{x})$ is balanced, in the sense that
\[
\dim_{k}M(H_{x})(\Sigma)=\dim_{k}M(H_{x})(\overline{\Sigma}).
\]
By $M(H_{x})$ we denote the covariant Dieudonn\'e module of $H_{x}.$
It coincides with the contravariant Dieudonn\'e module of the Cartier
dual $H_{x}^{D}$ of $H_{x}$. See \cite{=00005BdS-G2=00005D} \S1.2.1
for a discussion of the Raynaud condition when $m=1$, and its relation to the
original condition imposed by Raynaud in \cite{=00005BRay=00005D}.
\begin{prop}
The scheme $\mathcal{S}_{0}(p)$ is a moduli space for pairs $(\underline{A},H)$
where $\underline{A}\in\mathcal{S}(R)$ and $H\subset A[p]$ is a
finite flat group scheme of rank $p^{2m}$, which is isotropic for
the Weil pairing on $A[p]$ induced by $\phi$, $\mathcal{O}_{E}$-stable
and Raynaud.
\end{prop}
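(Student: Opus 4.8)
The scheme $\mathcal{S}_{0}(p)$ defined by Rapoport--Zink via the lattice chain $\mathscr{L}$ is the moduli space for pairs $(\underline{A},H)$ with $H\subset A[p]$ finite flat of rank $p^{2m}$, $\mathcal{O}_E$-stable, isotropic for the $\phi$-Weil pairing, and Raynaud.

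\bigskip

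The plan is to translate the lattice-chain level structure of Rapoport--Zink into a subgroup-scheme datum, and then to check that the discrete invariants match. First I would recall that a point of $\mathcal{S}_{0}(p)$ over an $\mathcal{O}_{E,(p)}$-algebra $R$ is, by \cite{=00005BRa-Zi=00005D} \S6.9, a chain of abelian schemes with $\mathcal{O}_E$-action and compatible polarizations, $\mathcal{O}_E$-isogenies $A_{0}\to A_{1}\to A_{2}\to A_{0}$ whose composite is multiplication by $p$ on $A_{0}$, modelled on the lattice chain $\Lambda_{p}\supset\Lambda_{p}'\supset\Lambda_{p}''\supset p\Lambda_{p}$, together with the prime-to-$p$ level structure $\eta$. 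The standard dictionary (isogenies of $p$-power degree correspond to finite flat subgroup schemes of the kernel of multiplication by a $p$-power) lets me replace this chain by the single abelian scheme $A=A_{0}$ together with the two subgroup schemes $H_{1}=\ker(A_{0}\to A_{1})$ and $H_{2}=\ker(A_{0}\to A_{2})$, satisfying $H_{1}\subset H_{2}\subset A[p]$. So the first step is: produce from the RZ datum the flag $H_{1}\subset H_{2}$, and conversely reconstruct the isogeny chain from such a flag, checking these constructions are mutually inverse and functorial in $R$.

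\bigskip

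Second, I would identify the rank and symmetry constraints forced by the lattice chain. Since $[\Lambda:\Lambda']=p^{2m}$ (we kill $pe_{1},\dots,pe_{m}$, i.e.\ an $\mathcal{O}_E$-module of rank $m$, hence of $\mathbb{Z}$-rank $2m$) and $[\Lambda:\Lambda'']=p^{2n}$, the subgroups have ranks $\#H_{1}=p^{2m}$ and $\#H_{2}=p^{2n}$; moreover $H_{1}$ and $H_{2}$ are $\mathcal{O}_E$-stable because the lattices are $\mathcal{O}_E$-lattices. The crucial point is the polarization/duality compatibility: the RZ moduli problem requires the polarizations on $A_{0},A_{1},A_{2}$ to be compatible with the self-duality pattern of the chain, and the relation $\Lambda'^{\vee}=p^{-1}\Lambda''$ noted in the text translates precisely into $H_{2}=H_{1}^{\perp}$ under the $\phi$-Weil pairing on $A[p]$ (equivalently $A_{1}\cong A_{2}^{t}$ up to the polarization). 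In particular $H_{1}\subset H_{1}^{\perp}$, i.e.\ $H_{1}$ is isotropic. Thus a point of $\mathcal{S}_{0}(p)$ is exactly a pair $(\underline{A},H)$ with $H=H_{1}$ finite flat of rank $p^{2m}$, $\mathcal{O}_E$-stable and isotropic — and then $H_{2}=H^{\perp}$ is determined, its rank being automatically $p^{2(n+m)}/p^{2m}=p^{2n}$. This disposes of everything in the proposition except the Raynaud condition; the work here is bookkeeping with the polarization pairing on de Rham / Tate modules, using the facts recalled in \S2.1.6.

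\bigskip

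The main obstacle is showing that the ``balanced Dieudonné module'' (Raynaud) condition is automatically satisfied — equivalently, that it is forced by the other constraints and the signature $(n,m)$ of $\underline{A}$, so that it is not an extra condition but merely a convenient reformulation. I would argue as follows at a characteristic-$p$ geometric point $x$: the $\mathcal{O}_E\otimes k$-module $M(H_{x})$ decomposes as $M(H_{x})(\Sigma)\oplus M(H_{x})(\overline{\Sigma})$, and since $H_{x}$ is $\phi$-isotropic of rank $p^{2m}$ inside $A[p]$, the Weil pairing identifies $M(A_{x}[p])/M(H_{x})$ with the $k$-linear dual of $M(H_{x})$ twisted by the type-reversing Rosati involution; this forces, on the $2(n+m)$-dimensional module $M(A_{x}[p])$ (which has $\Sigma$-part of dimension $n+m$ and $\overline\Sigma$-part of dimension $n+m$), the identity $\dim M(H_{x})(\Sigma)+\dim M(H_{x})(\overline{\Sigma})=2m$ together with the self-dual symmetry $\dim M(H_{x})(\Sigma)=\big((n+m)-\dim M(H_{x})(\overline\Sigma)\big)$ from the perpendicularity $H_{2}=H_{1}^{\perp}$. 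Combining these two linear relations gives $\dim M(H_{x})(\Sigma)=\dim M(H_{x})(\overline{\Sigma})=m$, which is exactly the balanced condition. The only subtlety is keeping the covariant/contravariant conventions and the $\Sigma$ versus $\overline\Sigma$ labelling straight through the Cartier-duality step; once that is pinned down, the Raynaud condition reads off automatically, and the identification of moduli problems is complete. I would also remark that one should check $\mathcal{S}_{0}(p)$ so described is the \emph{flat closure} of its generic fibre, but this follows from G\"ortz's flatness result cited in the introduction, so no separate argument is needed.
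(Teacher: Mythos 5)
The paper's own ``proof'' is a one-line citation to \cite{=00005BdS-G2=00005D}, \S1.3, so any substantive argument has to be judged on its own merits. Your first two steps are the right framework and agree with what the cited reference does: unpack the Rapoport--Zink chain $A_{0}\to A_{1}\to A_{2}\to A_{0}$ into a flag $H_{1}\subset H_{2}\subset A[p]$, read off the ranks $p^{2m}$ and $p^{2n}$ from the lattice indices, and use $\Lambda'^{\vee}=p^{-1}\Lambda''$ to get $H_{2}=H_{1}^{\perp}$, hence isotropy of $H_{1}$ and the fact that the flag is determined by $H_{1}$ alone.

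The third step, however, contains a genuine error. You claim the Raynaud condition is automatic, but your derivation is internally inconsistent and the conclusion is false. First, the Weil pairing does \emph{not} identify $M(A_{x}[p])/M(H_{x})$ with the dual of $M(H_{x})$: it identifies $M(A_{x}[p])/M(H_{x}^{\perp})$ with that dual, and $M(A_{x}[p])/M(H_{x})$ has dimension $2n\neq 2m$ when $m<n$. Second, the two ``linear relations'' you combine, namely $\dim M(H_{x})(\Sigma)+\dim M(H_{x})(\overline{\Sigma})=2m$ and $\dim M(H_{x})(\Sigma)=(n+m)-\dim M(H_{x})(\overline{\Sigma})$, are mutually contradictory for $m<n$; what duality actually gives is a relation between the types of $M(H_{1})$ and those of $M(H_{1}^{\perp})$, which says nothing about $M(H_{1})$ being balanced by itself. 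Third, and most importantly, the Raynaud condition is \emph{not} a consequence of ``finite flat of rank $p^{2m}$, $\mathcal{O}_{E}$-stable, isotropic'': e.g.\ at superspecial points there exist $\mathcal{O}_{E}$-stable isotropic subgroup schemes of the right rank whose Dieudonn\'e module is concentrated in a single type. The Raynaud condition is precisely the characteristic-$p$ translation of the Kottwitz determinant (signature) condition that the Rapoport--Zink moduli problem imposes on the \emph{intermediate} member $A_{1}=A_{0}/H_{1}$ of the chain --- a condition your dictionary silently drops. Without it the moduli problem acquires extra components in the special fiber and is no longer the flat RZ model, which is the whole point of including it in the statement. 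The missing step is therefore: show that, given the other conditions, the determinant condition on $\Lie(A_{0}/H)$ (and, dually, on $\Lie(A_{0}/H^{\perp})$) is equivalent to $M(H_{x})$ being balanced at every geometric point of characteristic $p$, via the exact sequence of Dieudonn\'e modules attached to the isogeny $A_{0}\to A_{0}/H$.
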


\begin{proof}
See \cite{=00005BdS-G2=00005D}, \S1.3. The proof given there for $(n,m)=(2,1)$
can be adapted to the general case \emph{mutatis mutandis}.
\end{proof}
We denote\footnote{No confusion should arise from the fact that we denote by $\pi$ also
the structure map $\mathcal{A}\to\mathcal{S}$, or the ratio of the
circumference of the circle to its diameter.} by $\pi:\mathcal{S}_{0}(p)\to\mathcal{S}$ the morphism which on
the moduli problem is ``forget $H$''. The scheme $\mathcal{S}_{0}(p)$
is not smooth over $\mathcal{O}_{E,(p)}$ but G\"ortz \cite{=00005BG=0000F6=00005D},
and later Pappas and Zhu \cite{=00005BP-Z=00005D}, proved the following.
\begin{prop}
\label{prop:=00005BG=0000F6=00005D}(\cite{=00005BP-Z=00005D}, Theorem
0.2) The scheme $\mathcal{S}_{0}(p)$ is proper and flat over $\mathcal{O}_{E,(p)}$,
the irreducible components of its special fiber are reduced, and their
local rings are Cohen-Macaulay and normal.
\end{prop}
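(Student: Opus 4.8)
The plan is to follow the strategy of G\"ortz \cite{=00005BG=0000F6=00005D} and of Pappas--Zhu \cite{=00005BP-Z=00005D}: transport every assertion to the Rapoport--Zink local model. The elementary part is immediate. The morphism $\pi:\mathcal{S}_{0}(p)\to\mathcal{S}$ is ``forget $H$'', and since $H$ is constrained to be an $\mathcal{O}_{E}$-stable finite flat subgroup scheme of $\mathcal{A}[p]$ of fixed rank $p^{2m}$, the functor is represented by a closed subscheme of a relative Grassmannian (or Hilbert scheme) over $\mathcal{S}$; hence $\pi$ is projective with finite fibres, so finite, which gives the properness assertion. Flatness over $\mathcal{O}_{E,(p)}$ and the geometry of the special fibre, however, cannot be seen this way. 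For these one attaches to the lattice chain $\mathscr{L}=(\cdots\supset\Lambda_{p}\supset\Lambda_{p}'\supset\Lambda_{p}''\supset p\Lambda_{p}\supset\cdots)$, the parahoric $K_{0}(p)_{p}$ and the minuscule coweight $\mu$ of signature $(n,m)$, the local model $M^{\mathrm{loc}}$ over $\mathcal{O}_{E,(p)}$ of \cite{=00005BRa-Zi=00005D}: the projective scheme parametrising chains of $\mathcal{O}_{E}$-stable direct summands $\mathcal{F}_{\bullet}\subset\mathscr{L}\otimes(-)$ of the $\mathcal{O}$-ranks prescribed by $\mu$ at each vertex, compatible with the chain maps and isotropic for the polarisation form. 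The de~Rham filtration of the chain $\mathcal{A}\leftarrow\mathcal{A}'\leftarrow\cdots$ produces the local model diagram
\[
\mathcal{S}_{0}(p)\longleftarrow\widetilde{\mathcal{S}}_{0}(p)\longrightarrow M^{\mathrm{loc}}
\]
in which the left arrow is a torsor under the smooth group of chain automorphisms and the right arrow is smooth; consequently $\mathcal{S}_{0}(p)$ and $M^{\mathrm{loc}}$ are isomorphic \'etale-locally, so ``flat over $\mathcal{O}_{E,(p)}$'', ``special fibre reduced'', ``Cohen--Macaulay'' and ``normal'' hold for one iff they hold for the other, and the irreducible components of the two special fibres correspond.

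It then remains to analyse $M^{\mathrm{loc}}$. Its generic fibre is a (twisted) product of partial flag varieties, smooth of dimension $nm$. For the special fibre, embed $M^{\mathrm{loc}}\otimes\kappa$ in the affine flag variety $\mathrm{Fl}$ of the relevant loop group over $\kappa$: the image is a union of affine Schubert varieties $\overline{\mathcal{S}_{w}}$ indexed by the $\mu$-admissible set $\mathrm{Adm}(\mu)$ in the affine Weyl group, the maximal ones being exactly the irreducible components. Two inputs are needed. First, the dimension count $\dim\overline{\mathcal{S}_{w}}=\ell(w)\le nm$ for $w\in\mathrm{Adm}(\mu)$, with equality for the maximal $w$, so that $M^{\mathrm{loc}}\otimes\kappa$ is equidimensional of dimension $nm$; combined with the Cohen--Macaulayness below and generic flatness, this gives flatness over $\mathcal{O}_{E,(p)}$. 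Second, normality and Cohen--Macaulayness of each $\overline{\mathcal{S}_{w}}$, together with a Frobenius-splitting (coherence) argument showing that the reduced union of them — hence $M^{\mathrm{loc}}\otimes\kappa$ itself — is reduced with no embedded components. Pulling these conclusions back through the local model diagram yields the Proposition.

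The main obstacle is the second input: the normality and Cohen--Macaulayness of the affine Schubert varieties that occur in the special fibre, and the identification of $M^{\mathrm{loc}}\otimes\kappa$ with the \emph{reduced} union of them. This is the substantive content of G\"ortz's and Pappas--Zhu's work, resting on Faltings' and Pappas--Rapoport's analysis of local models of type $A$ and, in the parahoric generality, on Zhu's proof of the coherence conjecture. In our situation $p$ is \emph{unramified} in $E$, so $\boldsymbol{G}$ splits over an unramified extension and $M^{\mathrm{loc}}$ is of ``linear type''; one may therefore appeal directly to G\"ortz \cite{=00005BG=0000F6=00005D}, the only extra bookkeeping being the self-duality imposed by the principal polarisation, handled as in \cite{=00005BG=0000F6=00005D} and \cite{=00005BP-Z=00005D}.
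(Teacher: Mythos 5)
The paper gives no proof of this proposition: it is quoted verbatim from the literature (G\"ortz and Pappas--Zhu), and your outline is precisely the local-model strategy of those references --- the local model diagram, identification of the special fibre of $M^{\mathrm{loc}}$ with the reduced union of affine Schubert varieties over the $\mu$-admissible set, and normality/Cohen--Macaulayness of Schubert varieties plus the coherence conjecture. So as far as the comparison goes, you are taking the same route the paper takes, namely deferring to \cite{=00005BG=0000F6=00005D} and \cite{=00005BP-Z=00005D}, only with the internals of those references sketched out.

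One step in your elementary part is, however, genuinely wrong: you assert that $\pi:\mathcal{S}_{0}(p)\to\mathcal{S}$ is ``projective with finite fibres, so finite.'' The fibres of $\pi$ are \emph{not} finite in the special fibre: over deep Ekedahl--Oort strata they are positive-dimensional (the paper itself points out that the projection from $S_{0}(p)_{m}$ to $S$ is not everywhere finite, and indeed $S_{0}(p)_{m}\simeq S^{\sharp}$, whose fibres over $S$ are Grassmannians of positive dimension outside $S_{\sharp}$). Projectivity of $\pi$ is all you can and need to claim; finiteness fails, and it is not needed for the properness assertion. Separately, note that properness of $\pi$ over $\mathcal{S}$ does not by itself give properness of $\mathcal{S}_{0}(p)$ over $\mathcal{O}_{E,(p)}$, since $\mathcal{S}$ is only quasi-projective; the wording of the proposition should be read as a statement about the morphism $\pi$ together with the local structure of the integral model, which is what \cite{=00005BP-Z=00005D} actually establishes.
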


\subsubsection{The ordinary-multiplicative and ordinary-\'etale loci}

Let $S_{0}(p)$ be the special fiber of $\mathcal{S}_{0}(p)$. Let
$S_{0}(p)^{\ord}=\pi^{-1}(S^{\ord}).$ If $x\in S^{\ord}(k)$ is a
geometric point, then $\mathcal{A}_{x}[p^{\infty}]$ is given by (\ref{eq:mu_ord_p_div_gp}),
and any isotropic Raynaud $\mathcal{O}_{E}$-subgroup scheme of $\mathcal{A}_{x}[p]$
of rank $p^{2m}$ is of the form
\[
H\simeq(\mathcal{O}_{E}\otimes\mu_{p})^{r}\times\mathscr{G}_{\Sigma}[p]^{s}\times(\mathcal{O}_{E}\otimes\mathbb{Z}/p\mathbb{Z})^{t}
\]
for an ``ordinary type'' ($r,s,t),$ $r+s+t=m.$ The type $(r,s,t)$
is locally constant on $S_{0}(p)^{\ord}$ in the Zariski topology.
Indeed, $p^{2r+2s}$ is the rank of the connected part $H^{conn}$,
which can only go up under specialization, by duality the same is
true of $p^{2s+2t},$ but $r+s+t$ is constant. Thus under specialization
the only possibility is for $r$ and $t$ to go down, and for $s$
to go up. But the same must be true of $A[p]/H.$ If we specialize
to a $\mu$-ordinary point, the type of $A[p]/H$ is $(m-r,n-m-s,m-t)$,
hence $m-r$ and $m-t$ must also go down, forcing $r,s$ and $t$
to remain constant.

It follows that the discrete invariants $(r,s,t)$ decompose $S_{0}(p)^{\ord}$
into disjoint open sets. We denote by $S_{0}(p)_{m}^{\ord}$ the locus
where $H$ is of multiplicative type ($s=t=0$) and by $S_{0}(p)_{et}^{\ord}$
the locus where $H$ is \'etale ($r=s=0$). The other loci are denoted
by $S_{0}(p)_{r,s,t}^{\ord},$ and the projection from them to $S^{\ord}$
will be denoted $\pi_{r,s,t}.$
\begin{prop}
\label{prop:S_0(p)^ord_m,et are smooth}The loci $S_{0}(p)_{m}^{\ord}$
and $S_{0}(p)_{et}^{\ord}$ are non-singular and relatively irreducible
over $S^{\ord}$.
\end{prop}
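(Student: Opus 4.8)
The plan is to exhibit $S_{0}(p)_{m}^{\ord}$ and $S_{0}(p)_{et}^{\ord}$ explicitly as quotients of (open subsets of) $S^{\ord}$ by finite locally free equivalence relations coming from height-$1$ foliations, which automatically yields non-singularity via Proposition \ref{prop:=00005BEk=00005D,-Proposition-2.4.}. Concretely, I would first treat the multiplicative locus. For $x\in S^{\ord}(k)$, the $p$-divisible group $\mathcal{A}_{x}[p^{\infty}]$ is given by \eqref{eq:mu_ord_p_div_gp}, and the unique isotropic Raynaud $\mathcal{O}_{E}$-subgroup of rank $p^{2m}$ of multiplicative type is precisely $(\mathcal{O}_{E}\otimes\mu_{p})^{m}$, i.e.\ the multiplicative part of $\mathcal{A}_{x}[p]$. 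So $H$ is canonically determined by $\underline{A}$ and $S_{0}(p)_{m}^{\ord}\to S^{\ord}$ is a bijection on geometric points; I would argue it is an isomorphism, so $S_{0}(p)_{m}^{\ord}$ is non-singular and relatively irreducible over $S^{\ord}$ because $S^{\ord}$ is.

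For the étale locus the story is the interesting one: here $H\simeq(\mathcal{O}_{E}\otimes\mathbb{Z}/p\mathbb{Z})^{m}$ is étale, so it is \emph{not} determined by $\underline{A}$ — it is a choice of rank-$p^{m}$ étale $\mathcal{O}_{E}$-stable isotropic subgroup of $\mathcal{A}_{x}[p]^{et}$. I would identify such $H$ with its Cartier dual data: via the polarization, giving an isotropic $H$ of the étale type is equivalent to choosing, in the connected-multiplicative part of $\mathcal{A}[p]$, a maximal isotropic $\mathcal{O}_{E}$-stable subgroup that is the annihilator of $H$. Dually, étale $H$'s correspond to sub-$\mathbb{F}_{p^2}$-spaces of the Tate module $T_{p}\mathcal{A}$ modulo $p$ of the correct type — i.e.\ the fiber of $\pi_{et}$ over a $\mu$-ordinary point is a single reduced point once one passes to the honest moduli interpretation, because the Raynaud/isotropy/$\mathcal{O}_{E}$-stable conditions pin down $H$ as the kernel of multiplication-by-$p$ on the canonical étale quotient $p$-divisible subgroup $(\mathcal{O}_{E}\otimes\mathbb{Q}_{p}/\mathbb{Z}_{p})^{m}$ of $\mathcal{A}_{x}[p^{\infty}]$ in \eqref{eq:mu_ord_p_div_gp}. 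That canonical quotient exists in families over $S^{\ord}$ (it is the image of the $V$-stable étale part), so again $H$ is canonically determined and $S_{0}(p)_{et}^{\ord}\to S^{\ord}$ is an isomorphism, giving non-singularity and relative irreducibility.

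The main obstacle, and where I expect to spend real effort, is establishing rigorously that over the \emph{whole} ordinary locus — not just at $\mu$-ordinary points — the subgroup $H$ in the multiplicative (resp.\ étale) case varies in a flat family isomorphic to $S^{\ord}$, i.e.\ that the forgetful map $\pi_{m}$ (resp.\ $\pi_{et}$) is not merely bijective on $k$-points but an isomorphism of schemes. The cleanest route is to observe that the connected-multiplicative (resp.\ étale) $p$-divisible subgroup of $\mathcal{A}[p^{\infty}]$ is defined over all of $S^{\ord}$ as a sub-$p$-divisible group (using that $V$ acts with constant rank there, so $\mathcal{A}^{(p)}[F]$, $\mathcal{A}[V]$ etc.\ are sub-group-schemes of constant rank), extract $H$ as its $p$-torsion, and check directly that $(\underline{A},H)\mapsto\underline{A}$ is inverse to $\underline{A}\mapsto(\underline{A},H)$ on $R$-points for every $k$-algebra $R$ with $k\supset\kappa$; smoothness then descends from $\mathcal{S}$. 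Relative irreducibility over $S^{\ord}$ is then immediate since each locus maps isomorphically (hence its irreducible components correspond bijectively to those of their image, which is all of $S^{\ord}$).
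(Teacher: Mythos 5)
Your treatment of the multiplicative locus is correct and is exactly what the paper does: $A[p]^{mult}$ is a canonically defined finite flat subgroup scheme over any base mapping to $S^{\ord}$, the resulting section $\sigma_{m}$ inverts $\pi_{m}$, so $S_{0}(p)_{m}^{\ord}\simeq S^{\ord}$ and everything follows. The \'etale case, however, contains a fatal error: you claim that $H$ is ``canonically determined'' as the $p$-torsion of the \'etale quotient and conclude that $\pi_{et}:S_{0}(p)_{et}^{\ord}\to S^{\ord}$ is an isomorphism. This is false. It is true that $\pi_{et}$ is a bijection on geometric points (over an algebraically closed field the connected--\'etale sequence splits canonically and $H$ must be the \'etale part of $A[p]$), but over a non-reduced base an \'etale subgroup scheme $H\subset A[p]$ of rank $p^{2m}$ amounts to a splitting of the connected--\'etale sequence of $A[p]$, and such splittings form a torsor under $\Hom_{R}(A[p]^{et},A[p]^{conn})$, which is highly non-trivial infinitesimally --- this is precisely the mechanism behind Serre--Tate coordinates. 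In fact $\pi_{et}$ is purely inseparable of degree $p^{(2n-m)m}$ (the paper computes this from $\rho\circ\pi_{et}=\rho\circ\rho'\circ Fr_{p}=Fr_{p}^{2}$); were it an isomorphism, the quotient of $S^{\ord}$ by the rank-$m^{2}$ foliation would be all of $Fr_{p}^{2}$ and the central results of the paper would be vacuous. Note also that your opening suggestion to obtain non-singularity by exhibiting $S_{0}(p)_{et}^{\ord}$ as a foliation quotient would be circular here: Theorem \ref{thm:fol-mor corresp} and the flatness of the maps $\rho,\rho'$ rest on the regularity established in this very proposition.

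What the paper actually does for the \'etale locus is different on both counts. Regularity follows from a homogeneity argument: the deformation problem (local model) at every closed point of $S_{0}(p)_{et}^{\ord}$ is identical, because the $p$-divisible group is $\mu$-ordinary and $H$ is, fiberwise, the kernel of $p$ in its \'etale part; hence either all points are non-singular or all are singular, and the latter is excluded because $S_{0}(p)_{et}^{\ord}$ is reduced by Proposition \ref{prop:=00005BG=0000F6=00005D}, hence generically regular. Relative irreducibility is deduced from the surjectivity of the purely inseparable map $\rho:S^{\ord}\to S_{0}(p)_{et}^{\ord}$ constructed in \S\ref{subsec:Morphisms-between-ordinary-=0000E9tale}, not from any identification of $S_{0}(p)_{et}^{\ord}$ with $S^{\ord}$.
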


\begin{proof}
This must be well-known. We shall see below that $S_{0}(p)_{m}^{\ord}$
is isomorphic to $S^{\ord}.$ The relative irreducibility of $S_{0}(p)_{et}^{\ord}$
is a consequence of the surjectivity of the map $\rho:S^{\ord}\to S_{0}(p)_{et}^{\ord}$
constructed in \S\ref{subsec:Morphisms-between-ordinary-=0000E9tale}.
Regularity can be proven by direct computation of the tangent space
using deformation theory, as outlined in \cite{=00005BdS-G2=00005D}
when $(n,m)=(2,1)$ (following Bella\"iche). Alternatively, one can
argue as follows. The input leading to the computation of the tangent
space (or even the local model) at a closed point $x\in S_{0}(p)_{et}^{\ord}$
is everywhere the same. This is because the $p$-divisible group is
$\mu$-ordinary and $H$ is the kernel of $p$ in its \'etale part.
Thus either all points of $S_{0}(p)_{et}^{\ord}$ are non-singular
or all are singular. The latter case is excluded since it is reduced,
by Proposition \ref{prop:=00005BG=0000F6=00005D}.
\end{proof}
Denote by $\pi_{m}$ and $\pi_{et}$ the restriction of the map $\pi$
to $S_{m}^{\ord}$ and $S_{et}^{\ord}$ (i.e. the maps $\pi_{m,0,0}$
and $\pi_{0,0,m}).$ The map $\pi_{m}$ is an \emph{isomorphism}.
In fact, it has the section associating to every $\underline{A}\in S^{\ord}(R)$
the $R$-point $(\underline{A},A[p]^{mult})\in S_{0}(p)_{m}^{\ord}(R)$
where $A[p]^{mult}$ is the maximal finite flat subgroup scheme of
$A[p]$ which is of multiplicative type (connected with \'etale dual).
This subgroup is automatically isotropic and Raynaud. Denote this
section by 
\[
\sigma_{m}:S^{\ord}\to S_{0}(p)_{m}^{\ord}.
\]

\subsubsection{Morphisms between ordinary-\'etale components\label{subsec:Morphisms-between-ordinary-=0000E9tale}}

We shall define morphisms $\theta,\theta',$ $\rho$ and $\rho'$
that fit into the commutative diagram below. The scheme $(S_{0}(p)_{et}^{\ord})^{(p)}$
appearing in the diagram classifies the same objects as $S_{0}(p)_{et}^{\ord},$
except that the signature of the $\mathcal{O}_{E}$ action is $(m,n)$
instead of $(n,m).$

\[\xymatrix{& (S_0(p)_{et}^{\text{ord}})^{(p)}\ar[d]^{\rho^\prime} \ar[dr]^{Fr_p} & \\ S_0(p)_{et}^{\text{ord}}\ar[ru]^{Fr_p}\ar[r]^(0.6){\pi_{et}}\ar[rd]_{\theta^\prime} & S^{\text{ord}}\ar@<-0.5ex>[d]_{\sigma_m}\ar[r]^(0.4)\rho & S_0(p)_{et}^{\text{ord}} \\ &  S_0(p)_{m}^{\text{ord}} \ar@<-0.5ex>[u]_{\pi_m}\ar[ur]_\theta }\]

\medskip{}

Before we embark on the definition, we want to make a notational remark.

\medskip{}

\textbf{Remark about Frobenii}: We denote by $Fr_{p}$ the Frobenius
of the scheme $S$ (or $S_{0}(p))$ relative to $\kappa.$ Thus, $Fr_{p}$
is a morphism of $\kappa$-schemes
\[
Fr_{p}:S\to S^{(p)},
\]
where $S^{(p)}=\Phi_{\kappa}^{*}S$ is the base change of $S$ with
respect to $\Phi_{\kappa}.$ If $\xi\in S(R)$ for some $\kappa$-algebra
$R$ and $\xi$ corresponds to the tuple $\underline{A}=\xi^{*}\underline{\mathcal{A}},$
then $Fr_{p}(\xi)\in S^{(p)}(R)$ corresponds to $\underline{A}^{(p)}=\Phi_{R}^{*}\underline{A},$
the base change of $A$ (with the associated PEL structure) \emph{with
respect to} $\Phi_{R}$. Note that $A^{(p)}$ has type $(m,n).$ We
write in short
\[
Fr_{p}(\underline{A})=\underline{A}^{(p)}.
\]
This should not be confused with the isogeny $\Fr:A\to A^{(p)}$,
which is a morphism of abelian schemes over $\Spec(R).$ A more appropriate
convention would have been to denote $Fr_{p}$ by $Fr_{S/\kappa}$
and the isogeny $\Fr$ by $Fr_{A/R}$ or $Fr_{\mathcal{A}/S},$ but
this would result in a pretty heavy notation.

\medskip{}

To define the map $\rho$ we consider the map
\[
\theta:S_{0}(p)_{m}^{\ord}\to S_{0}(p)_{et}^{\ord}
\]
defined on the moduli problems as
\[
\theta(\underline{A},H)=(\underline{A}^{(p^{2})},\Fr(\Ver^{-1}(H))).
\]
Then we let
\[
\rho=\theta\circ\sigma_{m}.
\]
Some words of explanation are in order. Here $\Ver^{-1}(H)$ is the
kernel of the isogeny $A^{(p)}\overset{\Ver}{\to}A\to A/H,$ hence
is finite flat of rank $p^{3m+n}$. For $H=A[p]^{mult},$ where $\Ver$
is an isomorphism from $H^{(p)}$ to $H,$ $\Ver^{-1}(H)$ coincides,
as a group functor, with $H^{(p)}+A^{(p)}[\Ver],$ so is seen to be
contained in $A^{(p)}[p].$ Also in this case, the kernel of $\Fr:A^{(p)}[p]\to A^{(p^{2})}[p]$
is contained in $\Ver^{-1}(H),$ hence the image $\Fr(\Ver^{-1}(H))$
is a finite flat subgroup scheme of rank $p^{2m}$. It is easily seen
that this subgroup scheme is $\mathcal{O}_{E}$-stable, Raynaud, isotropic
and \'etale, since these properties can be checked on the geometric
fibers. We also remark that for $H=A[p]^{mult}$
\[
\Fr(\Ver^{-1}(H))=\Fr(A^{(p)}[\Ver])=\Fr^{2}(A[p]).
\]
The reason we chose to define $\theta$ the way we did is that this
is the definition that will generalize later on, in \S\ref{subsec:The-maps-from S_0(p)_m and S_0(p)_et},
when $H$ is no longer multiplicative, to a map between the Zariski
closures $S_{0}(p)_{m}$ and $S_{0}(p)_{et}$ in $S_{0}(p)$.

\medskip{}

There is a similar map
\[
\theta':S_{0}(p)_{et}^{\ord}\to S_{0}(p)_{m}^{\ord}
\]
defined on the moduli problems as
\[
\theta'(\underline{A},H)=(\underline{A},\Ver(\Fr^{-1}(H^{(p^{2})}))).
\]
The proof that it is well-defined is similar to the one for $\theta.$
Note that
\[
\pi_{et}\circ\theta=Fr_{p}^{2}\circ\pi_{m},\,\,\,\,\pi_{m}\circ\theta'=\pi_{et},\,\,\,\,\theta\circ\theta'=Fr_{p}^{2},\,\,\,\,\theta'\circ\theta=Fr_{p}^{2}.
\]

\medskip{}

The definition of $\rho'$ is a little more subtle\footnote{We remind the reader that for general finite flat subgroup schemes
$\Gamma_{1}$ and $\Gamma_{2}$ of a finite flat group scheme $\Gamma,$
over an arbitrary locally Noetherian base, the subgroup scheme $\Gamma_{1}\cap\Gamma_{2}$
need not be flat, and the subgroup functor $\Gamma_{1}+\Gamma_{2}$
need not be represented by a group scheme at all. Similarly, for a
homomorphism $f$ between two finite flat group schemes, $\ker(f)$
is a subgroup scheme which need not be flat, and the group-functor
image of $f$ need not be represented by a group scheme at all.}. Let $(\underline{A}_{1},H_{1})\in(S_{0}(p)_{et}^{\ord})^{(p)}(R)$.
Thanks to the polarization, the subgroup scheme $H_{1}^{\perp}$ which
is the annihilator of $H_{1}$ relative to the Weil pairing on $A_{1}[p],$
is well-defined, and is finite flat of rank $p^{2n}$ over $R.$ We
claim that the closed subgroup scheme $H_{1}^{\perp}[\Fr]$ is finite
flat of rank $p^{n-m}.$ Indeed, it is enough to check it for the
universal $(\underline{A}_{1},H_{1})$ over $(S_{0}(p)_{et}^{\ord})^{(p)}.$
But now the base is reduced (by \cite{=00005BG=0000F6=00005D}) so
it is enough to check that all the geometric fibers of $H_{1}^{\perp}[\Fr]$
are of the same rank, and that this rank is $p^{n-m}.$ This is straightforward,
given that over an algebraically closed field we have the description
(\ref{eq:mu_ord_p_div_gp}). For any geometric point $x:\Spec(k)\to\Spec(R)$,
$H_{1}^{\perp}[\Fr]$ becomes the $\alpha_{p}$-subgroup which is
the kernel of Frobenius in the local-local part of $x^{*}(A_{1}[p]).$
Seen in another light, giving $H_{1}$ not only splits the connected-\'etale
exact sequence over $\Spec(R)$, but allows us to split off the local-local
part from the multiplicative part in $A_{1}[\Fr].$ In particular,
$H_{1}^{\perp}[\Fr]$ does not intersect $H_{1}$, so
\[
K_{1}:=H_{1}^{\perp}[\Fr]+H_{1}\simeq H_{1}^{\perp}[\Fr]\times H_{1}
\]
is finite flat of rank $p^{n+m}.$ This $K_{1}$ is a maximal isotropic
subgroup of $A_{1}[p],$ stable under $\mathcal{O}_E$, whose tangent space is $n-m$ dimensional,
of type $\overline{\Sigma}$ (sic!). Descending the polarization and
the endomorphisms to $A=A_{1}/K_{1}$ we get a principally polarized
abelian scheme over $R$, of type $(n,m).$ We let
\begin{equation}
\underline{A}=\rho'(\underline{A}_{1},H_{1}):=\left\langle p\right\rangle _{N}^{-1}\underline{A}_{1}/K_{1}\in S^{\ord}(R).\label{eq:Definition of rho'}
\end{equation}
The underlying principally polarized abelian scheme with endomorphisms
by $\mathcal{O}_{E}$ is $A.$ The level-$N$ structure differs from
the one descended from $\underline{A}_{1}$ by the diamond operator
$\left\langle p\right\rangle _{N}^{-1}.$ Recall that the \emph{diamond
operator} $\left\langle a\right\rangle _{N}$, for $a\in(\mathcal{O}_{E}/N\mathcal{O}_{E})^{\times}$
takes an $\mathcal{O}_{E}$-level-$N$ structure $\eta:(\mathcal{O}_{E}/N\mathcal{O}_{E})^{n+m}\simeq A[N]$
to $\eta\circ[a],$ where $[a]$ is multiplication by $a.$

\medskip{}

Having defined the maps in the diagram, we now check its commutativity.
We only have to check the commutativity of the top two triangles,
the bottom two being obvious. Let $\underline{A}_{1}$ and $\underline{A}$
be related by (\ref{eq:Definition of rho'}). Consider the morphism
$\Fr:\underline{A}\to\underline{A}^{(p)}$ obtained by dividing $A$
by $A[\Fr]$ and descending the polarization, the endomorphisms, and
the level structure. Since $A[\Fr]=A_{1}[p]/K_{1}$ we get the string
of isomorphisms
\begin{equation}
\underline{A}^{(p)}\simeq\underline{A}/A[\Fr]\simeq\left(\left\langle p\right\rangle _{N}^{-1}\underline{A}_{1}/K_{1}\right)/\left(A_{1}[p]/K_{1}\right)\simeq\left\langle p\right\rangle _{N}^{-1}\underline{A}_{1}/A_{1}[p]\simeq\underline{A}_{1}.\label{eq:string}
\end{equation}
(The last isomorphism is multiplication by $p,$ and it is the reason
for introducing $\left\langle p\right\rangle _{N}^{-1}$ in the definition
of $\rho'$.) We conclude that to accommodate an \'etale subgroup scheme
like $H_{1},$ $\underline{A}_{1}$ \emph{must be} of the form $\underline{A}^{(p)}$
and $K_{1}=A^{(p)}[\Ver].$ This \emph{may not be said} of $H_{1}$
itself, in general. However, if this is the case and $(\underline{A}_{1},H_{1})=(\underline{A}^{(p)},H^{(p)})=Fr_{p}((\underline{A},H))$
then the above discussion shows that $\rho'(\underline{A}_{1},H_{1})=\underline{A},$
proving the commutativity of the first triangle in the diagram:
\[
\rho'\circ Fr_{p}=\pi_{et}.
\]

For the second triangle consider
\[
\rho\circ\rho'(\underline{A}_{1},H_{1})=\rho(\underline{A})=(\underline{A}^{(p^{2})},\Fr(\Ver^{-1}(A[p]^{mult})))=(\underline{A}^{(p^{2})},\Fr^{2}(A[p]))
\]
\[
=(\underline{A}_{1}^{(p)},H_{1}^{(p)})=Fr_{p}((\underline{A}_{1},H_{1})).
\]
To justify the transition from the first to the second line, note
that $H_{1}^{(p)}=\Fr(H_{1})\subset\Fr^{2}(A[p])$ as $H_{1}\subset\Fr(A[p]),$
but $H_{1}^{(p)}$ and $\Fr^{2}(A[p])$ are both finite flat of rank
$p^{2m},$ so they coincide.
\begin{lem}
All the morphisms in the diagram are finite and flat.
\end{lem}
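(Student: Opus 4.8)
The plan is to split the nine arrows of the diagram into the ones that are finite flat for trivial reasons and the ones for which a short argument is needed, exploiting the fact that various composites of these arrows equal powers of $Fr_{p}$. First, $\pi_{m}$ is an isomorphism with inverse $\sigma_{m}$, so $\pi_{m}$ and $\sigma_{m}$ are finite and flat. The two arrows labelled $Fr_{p}$ are the relative Frobenius morphisms of $S_{0}(p)_{et}^{\ord}$ and of $(S_{0}(p)_{et}^{\ord})^{(p)}$ over $\kappa$; these schemes are smooth over $\kappa$ of pure dimension $nm$ (the second being a base change of the first along the automorphism $\Phi_{\kappa}$ of $\kappa$), and the relative Frobenius of a smooth variety over a perfect field is finite and flat of degree $p^{\dim}$, so both arrows are finite and flat.

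It remains to treat $\pi_{et},\rho,\rho',\theta$ and $\theta'$. Combining the commutativity of the diagram with the relations already recorded gives the factorization identities
\[
\pi_{et}\circ\rho=\rho\circ\pi_{et}=Fr_{p}^{2},\qquad \theta\circ\theta'=\theta'\circ\theta=Fr_{p}^{2},\qquad \rho'\circ Fr_{p}=\pi_{et},\qquad \rho\circ\rho'=Fr_{p},
\]
where, since $\kappa=\mathbb{F}_{p^{2}}$, the twofold Frobenius twist is canonically trivial, so that $Fr_{p}^{2}$ is an endomorphism of the scheme in question; composing the first and third identities one also gets $\rho'\circ(Fr_{p}\circ\rho)=Fr_{p}^{2}$. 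As $Fr_{p}$, hence $Fr_{p}^{2}$, is surjective, each of $\pi_{et},\rho,\rho',\theta,\theta'$ occurs both as the outer factor of a composite equal to a power of $Fr_{p}$ whose inner factor is surjective, and as the inner factor of a composite equal to a power of $Fr_{p}$. The first fact, together with the elementary observation that if $g\circ f$ is universally closed and $f$ is surjective then $g$ is universally closed, shows that the map is universally closed, in particular surjective; the second fact, together with the observation that $g\circ f$ quasi-finite implies $f$ quasi-finite, shows that it is quasi-finite. Since all the schemes in the diagram are quasi-projective over $\kappa$, these maps are separated and of finite type, and a separated, finite-type, universally closed, quasi-finite morphism is finite. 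Hence $\pi_{et},\rho,\rho',\theta,\theta'$ are finite.

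Finally, every arrow of the diagram is now a finite morphism whose source and target are smooth over $\kappa$ of pure dimension $nm$: for $S_{0}(p)_{et}^{\ord}$ and $S_{0}(p)_{m}^{\ord}$ smoothness is Proposition \ref{prop:S_0(p)^ord_m,et are smooth}, and purity of dimension follows because $\pi_{et}$ (resp.\ $\pi_{m}$) is finite and surjective onto $S^{\ord}$ --- which has pure dimension $nm$ --- and $S_{0}(p)_{et}^{\ord}$ is relatively irreducible over $S^{\ord}$, while $(S_{0}(p)_{et}^{\ord})^{(p)}$ is a base change of $S_{0}(p)_{et}^{\ord}$. A finite morphism has zero-dimensional fibres, so source and target have equal dimension at every point, and by miracle flatness (a finite morphism from a Cohen--Macaulay scheme to a regular scheme with fibres of constant dimension is flat) every arrow of the diagram is flat. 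The argument is entirely formal once the factorization identities and the regularity of $S_{0}(p)_{et}^{\ord}$ are in hand; the only points needing care are the book-keeping of Frobenius twists in those identities --- which is exactly where the hypothesis $\kappa=\mathbb{F}_{p^{2}}$ enters --- and the verification that $S_{0}(p)_{et}^{\ord}$ is equidimensional of dimension $nm$, and I do not anticipate a serious obstacle beyond this.
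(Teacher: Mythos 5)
Your proposal is correct and follows essentially the same route as the paper: finiteness is extracted from the factorization identities expressing composites of the arrows as powers of $Fr_{p}$, and flatness then follows by miracle flatness, since all the schemes in the diagram are regular of pure dimension $nm$. You simply make explicit the bookkeeping (universally closed plus quasi-finite for finiteness, equidimensionality via relative irreducibility) that the paper leaves implicit.
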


\begin{proof}
Since $Fr_{p}$ is a finite morphism between schemes of finite type
over a field, all the maps are clearly finite. The schemes are all
regular of dimension $nm.$ Finite maps between locally noetherian
regular schemes of the same dimension are flat (it is in fact enough
to assume that the source is Cohen-Macaulay). Note that by a theorem
of Kunz \cite{=00005BKu=00005D} the relative Frobenius morphism from
$X$ to $X^{(p)}$ is flat \emph{if and only if} $X$ is regular.
\end{proof}
\begin{lem}
The degrees of $\rho$ and $\rho'$ are given by 
\[
\deg(\rho)=p^{m^{2}},\,\,\,\,\,\deg(\rho')=p^{(n-m)m}.
\]
Furthermore, $\deg(\theta)=\deg(\rho)$ and $\deg(\theta')=\deg(\pi_{et})=p^{(2n-m)m}.$
\end{lem}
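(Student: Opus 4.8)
The plan is to deduce everything from the single equality $\deg(\rho)=p^{m^{2}}$. Since $\sigma_{m}$ and $\pi_{m}$ are isomorphisms, $\rho=\theta\circ\sigma_{m}$ gives $\deg(\theta)=\deg(\rho)$ and $\pi_{et}=\pi_{m}\circ\theta'$ gives $\deg(\theta')=\deg(\pi_{et})$. From $\theta'\circ\theta=Fr_{p}^{2}$ one gets $\deg(\theta')\deg(\theta)=\deg(Fr_{p}^{2})=p^{2nm}$, whence $\deg(\theta')=\deg(\pi_{et})=p^{2nm}/p^{m^{2}}=p^{(2n-m)m}$; and from $\rho\circ\rho'=Fr_{p}$ one gets $\deg(\rho')=\deg(Fr_{p})/\deg(\rho)=p^{nm}/p^{m^{2}}=p^{(n-m)m}$. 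So only $\deg(\rho)=p^{m^{2}}$ remains to be proved.

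First I would show that $\rho$ is of height $1$. By the previous lemma $\rho'$ is finite and flat, and the relation $\rho\circ\rho'=Fr_{p}$ displays the relative Frobenius of $(S_{0}(p)_{et}^{\ord})^{(p)}$ (note $((S_{0}(p)_{et}^{\ord})^{(p)})^{(p)}=S_{0}(p)_{et}^{\ord}$, as everything is defined over $\kappa=\mathbb{F}_{p^{2}}$) as $\rho\circ\rho'$; hence $\rho'$ is of height $1$. As recalled in \S\ref{subsec:Foliations-and-inseparable}, for a finite flat morphism $f$ of height $1$ the complementary morphism $g$ (the one with $g\circ f=Fr$) satisfies $f^{(p)}\circ g=Fr$, so $g$ too is of height $1$. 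Applying this to $f=\rho'$, whose complementary morphism is $\rho$ by uniqueness (as $\rho'$ is faithfully flat, hence an epimorphism), we conclude that $\rho$ is finite, flat, and of height $1$. Proposition~\ref{prop:=00005BEk=00005D,-Proposition-2.4.} then gives $\deg(\rho)=p^{\rk(\mathcal{E})}$, where $\mathcal{E}=\Omega_{S^{\ord}/S_{0}(p)_{et}^{\ord}}^{\vee}=\ker(d\rho)\subset\mathcal{T}S^{\ord}$ is a height $1$ foliation.

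It remains to prove $\mathcal{E}=\mathcal{T}S^{+}$, which, since $\mathcal{T}S^{+}$ has rank $m^{2}$, finishes the argument; this is the one step requiring real work, and it is the main obstacle, because the set-theoretic fibres of $\rho$ are single points and so the degree is invisible to point-counting — one must analyse the scheme structure. Concretely, $\rho(\underline{A})=(\underline{A}^{(p^{2})},\Fr^{2}(\mathcal{A}[p]))$; the first coordinate is $Fr_{p}^{2}$, whose differential vanishes, and a polarized abelian variety has no infinitesimal automorphisms, so $d\rho$ is governed entirely by the variation of the finite flat subgroup $\Fr^{2}(\mathcal{A}[p])=\Fr(\mathcal{A}^{(p)}[\Ver])$ inside the Gauss--Manin--horizontal group $\mathcal{A}_{0}^{(p^{2})}[p]$. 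Translating through Grothendieck--Messing and Dieudonn\'e theory, the subgroup $\mathcal{A}^{(p)}[\Ver]$ is precisely the datum cut out by $V$, and one checks that along a first order deformation with tangent vector $\xi$ it stays horizontal if and only if $\nabla_{\xi}$ preserves $\mathcal{P}_{0}=\mathcal{P}[V]$; by Corollary~\ref{cor: characterization of TS+} this says exactly $\xi\in\mathcal{T}S^{+}$. Hence $\ker(d\rho)=\mathcal{T}S^{+}$, so $\rk(\mathcal{E})=m^{2}$ and $\deg(\rho)=p^{m^{2}}$. (Equivalently, this last computation is what identifies the quotient of $S^{\ord}$ by $\mathcal{T}S^{+}$ with $S_{0}(p)_{et}^{\ord}$ and $\rho$ with the quotient map, so the degree also follows formally from that identification once it is in place.)
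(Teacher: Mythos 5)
Your formal reductions are correct and essentially mirror the paper's: the paper uses the same relations $\rho\circ\rho'=Fr_{p}$, $\rho'\circ Fr_{p}=\pi_{et}$, etc., to reduce everything to a single degree computation (it anchors on $\deg(\pi_{et})=p^{(2n-m)m}$, you anchor on $\deg(\rho)=p^{m^{2}}$; these are equivalent). Your observation that $\rho$ is a finite flat height $1$ morphism, so that $\deg(\rho)=p^{\rk(\mathcal{E})}$ with $\mathcal{E}=\ker(d\rho)$, is also sound.

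The gap is in the anchor step: the identification $\ker(d\rho)=\mathcal{T}S^{+}$ is not something you can wave through with ``one checks.'' That identification is exactly Theorem~\ref{thm:fol-mor corresp}, which in the paper is proved \emph{using} the present lemma: knowing $\deg(\rho)=p^{m^{2}}$ tells you that $\mathrm{Im}(\rho^{*}\Omega_{S_{0}(p)_{et}^{\ord}})$ is a sub-bundle of $\Omega_{S^{\ord}}$ of rank $(n-m)m$, so that a single inclusion $KS(\mathcal{P}_{0}\otimes\mathcal{Q})\subset\mathrm{Im}(\rho^{*}\Omega)$ (itself established by a nontrivial diagram chase through the isogenies $\psi$ and $\varphi$ and their effect on Kodaira--Spencer) forces equality. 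Your route requires the full ``if and only if'' — i.e.\ both inclusions between $\ker(d\rho)$ and $\mathcal{T}S^{+}$ — proved directly by deformation theory, and that is precisely the direction the paper's argument is designed to avoid; as written, your proposal either silently invokes Theorem~\ref{thm:fol-mor corresp} (circular relative to the paper's logical order) or leaves the hardest computation unperformed. The paper's actual proof of the lemma is entirely different and independent of the foliation: it lifts a $\mu$-ordinary point to characteristic $0$ using flatness of $\mathcal{S}_{0}(p)^{\ord}\to\mathcal{S}^{\ord}$, and computes $\deg(\pi_{et})$ as the number of isotropic $\mathcal{O}_{E}$-stable rank-$p^{2m}$ subgroups of $A(\overline{E}_{p})[p]$ complementary to the formal part, which reduces to counting solutions of $\Gamma_{1}+\,^{t}\Gamma_{1}^{(p)}+\,^{t}\Gamma_{2}^{(p)}\Gamma_{2}=0$ over $\mathbb{F}_{p^{2}}$, namely $p^{2nm-m^{2}}$. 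To repair your proof you would either need to supply that counting argument (or an equivalent one), or give a complete two-sided deformation-theoretic computation of $\ker(d\rho)$ that does not presuppose the degree.
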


\begin{proof}
Since $\rho\circ\rho'=Fr_{p}$ is of degree $p^{nm}$, it is enough
to prove the formula for $\deg(\rho')$. Since $\rho'\circ Fr_{p}=\pi_{et}$
it is enough to prove that $\deg(\pi_{et})=p^{2nm-m^{2}}.$ We use
a method of degeneration from characteristic 0, based on the flatness
of $\pi:S_{0}(p)^{\ord}\to S^{\ord}.$ This map is flat because it
is finite, $S^{\ord}$ is regular and $S_{0}(p)^{\ord}$ is Cohen-Macaulay.
(In fact, the arguments of Proposition \ref{prop:S_0(p)^ord_m,et are smooth}
prove that $S_{0}(p)^{\ord}$ is non-singular.)

Denote by $\mathcal{S}^{\ord}$ the complement in $\mathcal{S}$ of
the non-ordinary locus in the special fiber, and similarly $\mathcal{S}_{0}(p)^{\ord}.$
Since $\mathcal{S}_{0}(p)^{\ord}$ and $\mathcal{S}^{\ord}$ are flat
over $\mathcal{O}_{E,(p)}$ and both the generic and special fibers
of $\pi:\mathcal{S}_{0}(p)^{\ord}\to\mathcal{S^{\ord}}$ are flat,
then by the criterion for flatness fiber-by-fiber $\pi$ is flat also
on the ordinary parts of the arithmetic schemes.

Fix a $W(k)$-valued point
\[
\xi:\Spec(W(k))\to\mathcal{S}^{\ord},
\]
and denote its specialization by $\xi_{0}:\Spec(k)\to S^{\ord}.$
Consider the pull-back
\[
\xi^{*}\mathcal{S}_{0}(p)\to\Spec(W(k)).
\]
As the base change of the finite flat morphism $\mathcal{S}_{0}(p)^{\ord}\to\mathcal{S^{\ord}}$,
this map is also finite flat. We denote by $\eta:\xi^{*}\mathcal{S}_{0}(p)\to\mathcal{S}_{0}(p)$
the base change of $\xi$ and by $\eta_{0}$ that of $\xi_{0}$.

\[\xymatrix{ \xi_0^\ast \mathcal{S}_0(p) \ar[r] \ar[d] \ar@/^1.5pc/[rr]^{\eta_0} & \xi^\ast \mathcal{S}_0(p) \ar[r]^\eta \ar[d] & \mathcal{S}_0(p) \ar[d] \\ \text{Spec}(k)\ar[r] \ar@/_1.5pc/[rr]_{\xi_0}& \text{Spec}(W(k)) \ar[r]^\xi & \mathcal{S} }\] 

Let $\xi^{*}\mathcal{S}_{0}(p)_{et}$ be the connected component of
$\xi^{*}\mathcal{S}_{0}(p)$ containing, in the special fiber, $\eta_{0}^{*}(S_{0}(p)_{et}^{\ord}).$
It is finite flat over $W(k),$ and the degree of its special fiber
is $\deg(\pi_{et}).$ We compute this degree in the \emph{generic}
fiber. Let $A=\xi^{*}\mathcal{A}$ be the pull-back of the universal
abelian scheme to $W(k),$ and let $A_{0}$ be its special fiber.
Consider $A(\overline{E}_{p})[p]$, the $p$-torsion in the group
of points of $A$ in a fixed algebraic closure $\overline{E}_{p}$
of the local field $E_{p}$. The geometric points in the generic fiber
of $\xi^{*}\mathcal{S}_{0}(p)_{et}$ are in 1-1 correspondence with
the isotropic, $\mathcal{O}_{E}$-stable subgroups $H\subset A(\overline{E}_{p})[p]$
of rank $p^{2m}$ specializing to $A_{0}[p]^{et}\subset A_{0}[p].$

Denote by $A(\overline{E}_{p})^{0}$ the kernel of the reduction map
$A(\overline{E}_{p})\to A_{0}(k).$ We have to count isotropic, $\mathcal{O}_{E}$-stable
subgroups $H\subset A(\overline{E}_{p})[p]$ of rank $p^{2m}$ satisfying
\[
H+A(\overline{E}_{p})^{0}[p]=A(\overline{E}_{p})[p].
\]

We are now reduced to linear algebra. The $\mathcal{O}_{E}$-module
$A(\overline{E}_{p})[p]$ with the hermitian pairing derived from
the polarization is isomorphic to $\kappa^{n+m}$ with the $\kappa$-hermitian
form
\[
(u,v)=\,^{t}u^{(p)}\left(\begin{array}{ccc}
 &  & 1_{m}\\
 & 1_{n-m}\\
1_{m}
\end{array}\right)v
\]
and we may choose the isomorphism so that $A(\overline{E}_{p})^{0}[p]$
is the subspace with the last $m$ entries $0.$ We thus have to count
equivalence classes of $\kappa$-linear maps $\lambda:\kappa^{m}\hookrightarrow\kappa^{n+m}$
satisfying (1) the image of $\lambda$ is isotropic, and (2) the projection
of the image of $\lambda$ on the last $m$ coordinates is an isomorphism.
Two such maps $\lambda_{1}$ and $\lambda_{2}$ are equivalent if
$\lambda_{2}=\lambda_{1}\circ\alpha$ for $\alpha\in GL_{m}(\kappa).$
This is the same as counting matrices $\Gamma\in M_{n\times m}(\kappa)$
satisfying
\[
(\,^{t}\Gamma^{(p)},1_{m})\left(\begin{array}{ccc}
 &  & 1_{m}\\
 & 1_{n-m}\\
1_{m}
\end{array}\right)\left(\begin{array}{c}
\Gamma\\
1_{m}
\end{array}\right)=0,
\]
or equivalently, counting pairs $(\Gamma_{1},\Gamma_{2})\in M_{m\times m}(\kappa)\times M_{(n-m)\times m}(\kappa)$
satisfying
\[
\Gamma_{1}+\,^{t}\Gamma_{1}^{(p)}+\,^{t}\Gamma_{2}^{(p)}\Gamma_{2}=0.
\]
This number is easily seen to be $p^{2nm-m^{2}}.$ Indeed, there are
$p^{2(n-m)m}$ choices for $\Gamma_{2}.$ For each choice of $\Gamma_{2}$
there are $p^{2\frac{m(m-1)}{2}}$ choices for the entries of $\Gamma_{1}$
above the diagonal, which are arbitrary and determine the entries
below the diagonal uniquely, and $p^{m}$ choices for the entries
on the diagonal. This concludes the proof of the lemma.
\end{proof}

\subsubsection{The relation between the foliation and $\rho$}

In \S\ref{subsec:Definition-and-first foliation} we have constructed
the foliation $\mathcal{T}S^{+}$ in the tangent bundle of $S^{\ord},$
while in \S\ref{subsec:Morphisms-between-ordinary-=0000E9tale} we
have constructed a flat height 1 morphism $\rho:S^{\ord}\to S_{0}(p)_{et}^{\ord}$.
We shall now prove that the two correspond to each other under the
dictionary between height 1 morphisms and height 1 foliations discussed
in Proposition \ref{prop:=00005BEk=00005D,-Proposition-2.4.}.
\begin{thm}
\label{thm:fol-mor corresp}The quotient of $S^{\ord}$ by the height
1 foliation $\mathcal{T}S^{+}$ is the height 1 morphism $\rho:S^{\ord}\to S_{0}(p)_{et}^{\ord}.$
\end{thm}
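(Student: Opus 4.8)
The plan is to read off both objects from Ekedahl's dictionary (Proposition~\ref{prop:=00005BEk=00005D,-Proposition-2.4.}) and then match them. First I would record that $\rho$ is a finite flat height~$1$ morphism of degree $p^{m^{2}}$: finiteness, flatness and the degree are exactly the lemmas of \S\ref{subsec:Morphisms-between-ordinary-=0000E9tale}, while the height~$1$ property follows from the relations in the commutative diagram there. Indeed, from $\pi_{et}\circ\rho=Fr_{p}^{2}$ and $\pi_{et}=\rho'\circ Fr_{p}$, together with the functoriality $Fr_{p}\circ\rho=\rho^{(p)}\circ Fr_{p}$ and the fact that the relative Frobenius is an epimorphism, one gets $\rho'\circ\rho^{(p)}=Fr_{(S^{\ord})^{(p)}/\kappa}$; twisting by $(-)^{(p)}$ and using $\kappa=\mathbb{F}_{p^{2}}$ (so that $(-)^{(p^{2})}$ is the identity) yields $(\rho')^{(p)}\circ\rho=Fr_{S^{\ord}/\kappa}$, exhibiting $\rho$ as height~$1$. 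By Proposition~\ref{prop:=00005BEk=00005D,-Proposition-2.4.}, $\rho$ therefore corresponds to a height~$1$ foliation $\mathcal{E}_{\rho}\subset\mathcal{T}S^{\ord}$ of rank $m^{2}$, namely $\mathcal{E}_{\rho}=\ker(d\rho)=\Omega_{S^{\ord}/S_{0}(p)_{et}^{\ord}}^{\vee}$, the sheaf of vector fields annihilating the functions pulled back from $S_{0}(p)_{et}^{\ord}$.

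It then suffices to prove a single inclusion, say $\mathcal{T}S^{+}\subseteq\mathcal{E}_{\rho}=\ker(d\rho)$: both are sub-bundles of $\mathcal{T}S^{\ord}$ of the same rank $m^{2}$, and an inclusion of coherent subsheaves between equal-rank sub-bundles is automatically an equality, the quotient being a rank-$0$ subsheaf of the locally free sheaf $\mathcal{T}S^{\ord}/\mathcal{T}S^{+}$, hence $0$. Once this equality is in hand, the uniqueness half of Ekedahl's dictionary identifies the quotient of $S^{\ord}$ by $\mathcal{T}S^{+}$ with $\rho$.

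To establish $\mathcal{T}S^{+}\subseteq\ker(d\rho)$ I would compute $d\rho$ by first-order deformation theory. Fix a geometric point $\underline{A}\in S^{\ord}(k)$ and a tangent vector $\xi\in\mathcal{T}S^{\ord}$ at $\underline{A}$, i.e. a lift $\widetilde{\underline{A}}$ of $\underline{A}$ over $k[\epsilon]$; then $d\rho(\xi)$ is the class of the deformation $\rho(\widetilde{\underline{A}})=(\widetilde{\underline{A}}^{(p^{2})},\Fr(\Ver^{-1}(\widetilde{A}[p]^{mult})))$ of $\rho(\underline{A})$ in $S_{0}(p)_{et}^{\ord}(k[\epsilon])$. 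Via Grothendieck--Messing, deformations of $\underline{A}$ are lifts of the Hodge filtration $\omega_{A}\subset H_{dR}^{1}(A)$ compatible with the polarization and the $\mathcal{O}_{E}$-action, and deformations of a pair $(\underline{B},H)$ with $H$ \'etale are lifts of the Hodge filtration of $B$ compatible also with the splitting of the connected--\'etale sequence supplied by $H$. Tracing how the isogenies $\Fr$ and $\Ver$ defining $\rho$ act on Dieudonn\'e modules, one sees that the subgroup $H=\Fr(\Ver^{-1}(\widetilde{A}[p]^{mult}))$ records precisely the position, inside $H_{dR}^{1}(\widetilde{A})$, of the kernel of $V$ on $\mathcal{P}$; hence $d\rho(\xi)=0$ if and only if the lift $\widetilde{\underline{A}}$ keeps $\mathcal{P}_{0}=\mathcal{P}[V]$ in place, i.e. $\nabla_{\xi}(\mathcal{P}_{0})\subseteq\mathcal{P}_{0}$. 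By Corollary~\ref{cor: characterization of TS+} this is exactly the condition $\xi\in\mathcal{T}S^{+}$, giving $\mathcal{T}S^{+}\subseteq\ker(d\rho)$ and completing the proof.

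The hard part will be this last step: the explicit Grothendieck--Messing/Kodaira--Spencer bookkeeping that converts ``$\rho(\widetilde{\underline{A}})$ is the trivial deformation'' into ``$\nabla_{\xi}$ preserves $\mathcal{P}[V]$''. One must follow, on the level of the evaluated crystal $H_{dR}^{1}$ over $k[\epsilon]$, the successive quotients by $\Ver$ and $\Fr$, keep track of how the Hodge filtration and the Raynaud subgroup $H$ transform, and recognise the resulting first-order constraint as the annihilator of $KS(\mathcal{P}_{0}\otimes\mathcal{Q})$, the occurrence of $V$ being exactly what pins it down to $\mathcal{P}[V]$. A slightly less computational variant is to use instead the relation $(\rho')^{(p)}\circ\rho=Fr_{S^{\ord}/\kappa}$ together with an analogous (and easier) description of $d\rho'$ via the isogeny $K_{1}\subset A_{1}[p]$, and compare annihilators; but in either approach this Dieudonn\'e-module computation is the crux, everything else --- Ekedahl's dictionary, the rank comparison, and the final identification $S_{0}(p)_{et}^{\ord}=S^{\ord}/\mathcal{T}S^{+}$ --- being formal.
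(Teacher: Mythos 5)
Your overall skeleton is sound and matches the paper's: invoke Ekedahl's dictionary, note that both $\mathcal{T}S^{+}$ and the foliation attached to $\rho$ are sub-bundles of rank $m^{2}$ of $\mathcal{T}S^{\ord}$ over a reduced base, and reduce to a single inclusion. The height-$1$ verification for $\rho$ via the relations $\rho\circ\rho'=Fr_{p}$ and $\rho'\circ Fr_{p}=\pi_{et}$ is also fine. But the step you yourself flag as ``the hard part'' is not a routine bookkeeping exercise to be deferred --- it \emph{is} the theorem, and the sketch you give of it does not go through as stated. The assertion that $H=\Fr(\Ver^{-1}(\widetilde{A}[p]^{mult}))$ ``records precisely the position of $\mathcal{P}[V]$'', so that triviality of the deformation of $(\underline{A}^{(p^{2})},H)$ is equivalent to $\nabla_{\xi}(\mathcal{P}_{0})\subset\mathcal{P}_{0}$, is exactly the content to be proved, and you offer no mechanism for it. Note in particular that over $k[\epsilon]$ the abelian schemes $\widetilde{A}^{(p)}$ and $\widetilde{A}^{(p^{2})}$ are \emph{constant} (Frobenius on $k[\epsilon]$ factors through $k$), so the entire first-order information of $\rho(\widetilde{\underline{A}})$ is hidden in how the flat subgroup $\Ver^{-1}(\widetilde{A}[p]^{mult})$ sits inside the constant $\widetilde{A}^{(p)}[p]$; extracting the Hodge-filtration condition from this requires crystalline Dieudonn\'e theory over the non-reduced, non-perfect base $k[\epsilon]$, and you would essentially have to reconstruct the paper's argument to carry it out.

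The paper avoids this difficulty by working on the cotangent side with $\rho'$ rather than $\rho$: since $\mathrm{Im}(\rho^{*}\Omega_{S_{0}(p)_{et}^{\ord}})=\ker(\rho'^{*})$ (both being rank-$(n-m)m$ sub-bundles, using $\rho\circ\rho'=Fr_{p}$), it suffices to show $\rho'^{*}KS(\mathcal{P}_{0}\otimes\mathcal{Q})$ dies in $\Omega_{(S_{0}(p)_{et}^{\ord})^{(p)}}$. The decisive device is that over $(S_{0}(p)_{et}^{\ord})^{(p)}$ --- and \emph{only} there, because the universal \'etale subgroup $\mathcal{H}_{1}$ is not pulled back from $S^{\ord}$ --- the isogeny $\Ver:\mathcal{A}_{1}\to\mathcal{B}$ factors as $\varphi\circ\psi$ with $\psi$ \'etale of kernel $\mathcal{H}_{1}$ and $\ker\varphi$ local-local; then $\ker(\varphi^{*}|_{\rho'^{*}\mathcal{P}})=\rho'^{*}\mathcal{P}_{0}$, functoriality of Kodaira--Spencer under $\varphi$, and a signature argument showing $1\otimes\varphi^{*}$ is injective on $R^{1}\pi_{*}\mathcal{O}(\Sigma)$ give the required vanishing. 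If you want to salvage your plan, the cleanest route is to adopt this cotangent-side computation with $\rho'$; your proposed direct computation of $d\rho$ on $S^{\ord}$ lacks access to the splitting that makes the argument work.
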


\begin{proof}
We have to show that the image of $\rho^{*}(\Omega_{S_{0}(p)_{et}^{\ord}})$
in $\Omega_{S^{\ord}}$ is $KS(\mathcal{P}_{0}\otimes\mathcal{Q})$.
Since $\rho$ is a finite flat height 1 morphism of degree $p^{m^{2}}$
we know, from the general theory explained in \S\ref{subsec:Foliations-and-inseparable},
that the image of $\rho^{*}(\Omega_{S_{0}(p)_{et}^{\ord}})$ in $\Omega_{S}$
is a sub-bundle of $\Omega_{S^{\ord}}$ of rank $(n-m)m.$ Since the
same is true of $KS(\mathcal{P}_{0}\otimes\mathcal{Q})$, it is enough
to prove the inclusion
\[
KS(\mathcal{P}_{0}\otimes\mathcal{Q})\subset\mathrm{Im}(\rho^{*}(\Omega_{S_{0}(p)_{et}^{\ord}})\to\Omega_{S^{\ord}}).
\]
As the right hand side is equal to $\ker(\rho'^{*}:\Omega_{S^{\ord}}\to\Omega_{(S_{0}(p)_{et}^{\ord})^{(p)}})$
(they are both sub-bundles of rank $(n-m)m$ and the image of $\rho^{*}$
is contained in the kernel of $\rho'^{*}$ because $\rho\circ\rho'=Fr_{p}),$
it will be enough to prove that $KS(\mathcal{P}_{0}\otimes\mathcal{Q})$
is contained in the latter. More precisely, we have to show that
\[
\rho'^{*}KS(\mathcal{P}_{0}\otimes\mathcal{Q})\subset\ker(\rho'^{*}:\rho'^{*}\Omega_{S^{\ord}}\to\Omega_{(S_{0}(p)_{et}^{\ord})^{(p)}}).
\]

For this purpose consider the universal pair $(\mathcal{A}_{1},\mathcal{H}_{1})$
over $(S_{0}(p)_{et}^{\text{ord}})^{(p)}$. In (\ref{eq:string})
we have constructed $\mathcal{A}$ such that, at the level of points,
$\rho^{\prime}((\mathcal{A}_{1},\mathcal{H}_{1}))=\mathcal{A}$ (where
to simplify typesetting, we omit the underline symbol). Note that
$\mathcal{A}$ is a scheme over $S^{\text{ord}}$ and, letting $\mathcal{B}=\rho^{\prime\ast}\mathcal{A}$,
the relations obtained in (\ref{eq:string}) imply a canonical isomorphism
$\mathcal{A}_{1}=\mathcal{B}^{(p)}$, as abelian schemes over $(S_{0}(p)_{et}^{\text{ord}})^{(p)}$.
The construction also provides a canonical isogeny $\mathcal{A}_{1}\rightarrow\mathcal{B}$,
which is nothing else than $\text{Ver}$. 

The kernel of $\Ver$ contains the finite flat group scheme $\mathcal{H}_{1}$.
Thus, letting $\mathcal{C}=\mathcal{A}_{1}/\mathcal{H}_{1}$, we get
a decomposition of $\Ver$ as the composition of two isogenies between
abelian schemes over $(S_{0}(p)_{et}^{\text{ord}})^{(p)}$:

\[ \xymatrix{\mathcal A_1 \ar[r]^\psi\ar@/_1pc/[rr]_{\text{Ver}} & \mathcal C \ar[r]^\varphi& \mathcal B}.\]Here
$\psi$ is the isogeny with kernel $\mathcal{H}_{1}$, and $\varphi$
is the isogeny with kernel $\mathcal{A}_{1}[\text{Ver}]/\mathcal{H}_{1}$.
Note that although $\text{Ver}:\mathcal{A}_{1}\rightarrow\mathcal{B}$
is a pull-back by $\rho^{\prime}$ of a similar isogeny over $S^{\text{ord}}$,
only over $(S_{0}(p)_{et}^{\text{ord}})^{(p)}$ does it factor through
$\mathcal{C}$, because $\mathcal{H}_{1}$ is \emph{not} the pull-back
of a group scheme on $S^{\text{ord}}$.

Consider now the commutative diagram
\begin{equation}
\mathnormal{\begin{array}{ccc}
\rho'^{*}\mathcal{P}=\omega_{\mathcal{B}}(\Sigma) & \overset{KS_{\mathcal{B}}}{\longrightarrow} & \Omega_{(S_{0}(p)_{et}^{\ord})^{(p)}}\otimes R^{1}\pi_{*}\mathcal{O}_{\mathcal{B}}(\Sigma)\\
\downarrow\varphi^{*} &  & \downarrow\textsc{1\ensuremath{\otimes\varphi^{*}}}\\
\omega_{\mathcal{C}}(\Sigma) & \overset{KS_{\mathcal{C}}}{\longrightarrow} & \Omega_{(S_{0}(p)_{et}^{\ord})^{(p)}}\otimes R^{1}\pi_{*}\mathcal{O}_{\mathcal{C}}(\Sigma)
\end{array}}\label{eq:KS_diagram}
\end{equation}
resulting from the functoriality of the Gauss-Manin connection with
respect to the isogeny $\varphi.$ Here $KS_{\mathcal{B}}$ is the
Kodaira-Spencer map for the family $\mathcal{B}\to(S_{0}(p)_{et}^{\ord})^{(p)}$
and likewise for $\mathcal{C}$.

The kernel of the left vertical arrow $\varphi^{*}$ is precisely
$\rho'^{*}(\mathcal{P}_{0}).$ This is because $\psi$ is \'etale, so
$\psi^{*}$ is an isomorphism on cotangent spaces, hence
\[
\ker(\varphi^{*}|_{\rho'^{*}\mathcal{P}})=\ker(\Ver^{*}|_{\rho'^{*}\mathcal{P}})=\rho'^{*}(\mathcal{P}_{0}).
\]

On the right side of (\ref{eq:KS_diagram}), we claim that $1\otimes\varphi^{*}$
is injective. To verify it, consider the commutative diagram
\begin{equation}
\begin{array}{ccccccccc}
0 & \to & \omega_{\mathcal{B}}(\Sigma) & \to & H_{dR}^{1}(\mathcal{B}/S)(\Sigma) & \to & R^{1}\pi_{*}\mathcal{O}_{\mathcal{B}}(\Sigma) & \to & 0\\
 &  & \downarrow\varphi^{*} &  & \downarrow\varphi^{*} &  & \downarrow\varphi^{*}\\
0 & \to & \omega_{\mathcal{C}}(\Sigma) & \to & H_{dR}^{1}(\mathcal{C}/S)(\Sigma) & \to & R^{1}\pi_{*}\mathcal{O}_{\mathcal{C}}(\Sigma) & \to & 0
\end{array}.\label{eq:BCisogeny}
\end{equation}
The right vertical arrow may be identified with the $\Sigma$-component
of the map $\varphi_{*}^{t}:\Lie(\mathcal{B}^{t})\to\Lie(\mathcal{C}^{t})$.
The signature of $\mathcal{B}^{t}$ is $(m,n)$, and at every geometric
point $x$
\[
\mathcal{B}_{x}^{t}[p]\simeq(\mathcal{O}_{E}\otimes\mu_{p})^{m}\oplus(\mathscr{G}_{\overline{\Sigma}}[p])^{n-m}\oplus(\mathcal{O}_{E}\otimes\mathbb{Z}/p\mathbb{Z})^{m}.
\]
As $\ker(\varphi)=\mathcal{A}_{1}[\text{Ver}]/\mathcal{H}_{1}$ is
local-local, so is its dual $\ker(\varphi^{t})$; in fact, its geometric
fibers are all isomorphic to $\alpha_{p}^{n-m}\subset(\mathscr{G}_{\overline{\Sigma}}[p])^{n-m}$\emph{.
}The Lie algebra of $\ker(\varphi^{t}),$ i.e. $\ker(\varphi_{*}^{t})$,\emph{
is therefore of type} $\overline{\Sigma}$. We conclude that the right
vertical arrow of (\ref{eq:BCisogeny}), and with it the right vertical
arrow of (\ref{eq:KS_diagram}), are injective, as claimed.

As $\mathcal{B}=\rho'^{*}\mathcal{A}$, the morphism $KS_{\mathcal{B}}$
is the composition of the map
\[
\rho'^{*}(KS_{\mathcal{A}}):\rho'^{*}(\mathcal{P})\to\rho'^{*}(\Omega_{S^{\ord}})\otimes\rho'^{*}(\mathcal{Q}{}^{\vee})
\]
(we identify $\mathcal{Q=\omega_{\mathcal{A}}}(\overline{\Sigma})$
with $\omega_{\mathcal{A}^{t}}(\Sigma)$ via the polarization as usual,
hence also $R^{1}\pi_{*}\mathcal{O}_{\mathcal{A}}(\Sigma)=\Lie(\mathcal{A}^{t})(\Sigma)$
with $\mathcal{Q}^{\vee}$) and the map induced by
\[
\rho'^{*}:\rho'^{*}(\Omega_{S^{\ord}})\to\Omega_{(S_{0}(p)_{et}^{\ord})^{(p)}}.
\]

From the commutativity of (\ref{eq:KS_diagram}) we conclude that
$KS_{\mathcal{B}}(\rho'^{*}(\mathcal{P}_{0}))=0,$ hence the desired
inclusion
\[
\rho'^{*}(KS(\mathcal{P}_{0}\otimes\mathcal{Q}))\subset\ker(\rho'^{*}:\rho'^{*}\Omega_{S^{\ord}}\to\Omega_{(S_{0}(p)_{et}^{\ord})^{(p)}}).
\]
 
\end{proof}

\subsection{Moonen's generalized Serre-Tate coordinates}

Although not necessary for the rest of the paper, we digress to explain
the relation between $\mathcal{T}S^{+}$ and the generalized Serre-Tate
coordinates introduced by Moonen. For the following proposition see
\cite{=00005BMo1=00005D}, the remark at the end of Example 3.3.2,
and 3.3.3(d) (case AU, $r=3$).
\begin{prop}
\label{Moonen}Let $x\in S^{\ord}(k).$ Let $\mathfrak{\widehat{\mathscr{G}}}$
be the formal group over $k$ associated with the $p$-divisible group
$\mathfrak{\mathscr{G}}$ and let $\mathbb{\widehat{G}}_{m}$ be the
formal multiplicative group over $k.$ Then the formal neighborhood
$Spf(\widehat{\mathcal{O}}_{S,x})$ of $x$ has a natural structure
of a $\mathbb{\widehat{G}}_{m}^{m^{2}}$-torsor over $\mathfrak{\widehat{\mathscr{G}}}^{(n-m)m}$.
This torsor is obtained as the set of symmetric elements under the
involution induced by the polarization on a certain bi-extension of
$\mathfrak{\widehat{\mathscr{G}}}^{(n-m)m}\times\widehat{\mathscr{G}}^{(n-m)m}$
, hence it contains a canonical formal torus $\widehat{T}_{x}$ sitting
over the origin of $\mathfrak{\widehat{\mathscr{G}}}^{(n-m)m}$.
\end{prop}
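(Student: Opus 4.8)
The plan is to obtain the proposition by specializing Moonen's study of the $\mu$-ordinary locus \cite{=00005BMo1=00005D} to the $p$-divisible group appearing in (\ref{eq:mu_ord_p_div_gp}). By Serre--Tate theory, $\mathrm{Spf}(\widehat{\mathcal{O}}_{S,x})$ prorepresents the deformation functor of the principally polarized $\mathcal{O}_E$-$p$-divisible group $\mathcal{A}_{x}[p^{\infty}]$, so the whole statement becomes a statement about this deformation functor. By (\ref{eq:mu_ord_p_div_gp}), over the algebraically closed field $k$ the group $\mathcal{A}_{x}[p^{\infty}]$ is a product of a multiplicative part $G^{mult}=(\mathcal{O}_E\otimes\mu_{p^{\infty}})^{m}$, a ``biinfinitesimal'' part $G^{bi}=\mathscr{G}_{\Sigma}^{n-m}$ of slope $1/2$, and an \'etale part $G^{et}=(\mathcal{O}_E\otimes\mathbb{Q}_p/\mathbb{Z}_p)^{m}$; accordingly it carries a three-step slope filtration $G^{mult}\subset G^{mult}\times G^{bi}\subset\mathcal{A}_{x}[p^{\infty}]$, canonically split over $k$, so that one is in the situation of a ``$3$-cascade'' in Moonen's sense.

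The next step is to apply Moonen's description of the unpolarized deformation functor of such a cascade (\cite{=00005BMo1=00005D}, \S2 and Example 3.3.2): keeping the $\mathcal{O}_E$-action but forgetting the polarization, the deformation functor of $\mathcal{A}_{x}[p^{\infty}]$ is a biextension of $B_1\times B_2$ by $T$, where $B_1$ classifies deformations of the extension of $G^{et}$ by $G^{bi}$, $B_2$ those of the extension of $G^{bi}$ by $G^{mult}$, and $T$ those of the extension of $G^{et}$ by $G^{mult}$. The heart of the argument is the identification of these three formal groups. Using the standard fact $\mathrm{Ext}^{1}(\mathbb{Q}_p/\mathbb{Z}_p,\widehat{\mathcal{G}})\cong\widehat{\mathcal{G}}$ for a formal $p$-divisible group $\mathcal{G}$, together with $\mathcal{O}_E$-equivariance, the self-duality of $\mathscr{G}$, and a careful tracking of the $\Sigma$ and $\overline{\Sigma}$ types, one finds $B_1\cong B_2\cong\widehat{\mathscr{G}}^{(n-m)m}$, while $T$ is the formal group of $m\times m$ matrices with entries in $\mathcal{O}_E\otimes\widehat{\mathbb{G}}_m$, which over $k$ becomes $\widehat{\mathbb{G}}_m^{2m^{2}}$. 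The dimension count $2(n-m)m+2m^{2}=2nm$ is consistent with the dimension of the unpolarized $\mathcal{O}_E$-equivariant deformation space.

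Finally I would feed in the polarization. Since the Rosati involution attached to $\phi$ sends $\iota(a)$ to $\iota(\overline{a})$, the principal polarization identifies $(G^{et})^{\vee}$ with $G^{mult}$ with reversed type and $(G^{bi})^{\vee}$ with $G^{bi}$ compatibly with the self-duality of $\mathscr{G}$; dualizing the cascade therefore identifies $B_2$ with $B_1$ and endows the resulting biextension of $B_1\times B_1$ by $T$ with an involution which interchanges the two factors of $B_1\times B_1$ and acts on $T$ by a Hermitian-type involution whose fixed formal subgroup $T^{\mathrm{sym}}$ is $\cong\widehat{\mathbb{G}}_m^{m^{2}}$ (the diagonal entries of a Hermitian $m\times m$ matrix contribute $m$ one-dimensional tori, the off-diagonal pairs $m(m-1)$ further dimensions). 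By Moonen's treatment of the polarized case (\cite{=00005BMo1=00005D}, 3.3.3(d), case AU, $r=3$), the polarized deformation space is precisely the locus of symmetric elements of the cascade; this is a $T^{\mathrm{sym}}$-torsor over the locus in $B_1\times B_1$ fixed by the involution, which is a single copy of $\widehat{\mathscr{G}}^{(n-m)m}$, and the canonical formal torus $\widehat{T}_{x}$ is the fibre over the origin of $\widehat{\mathscr{G}}^{(n-m)m}$. The step I expect to be the main obstacle — and the one at which we are content to cite \cite{=00005BMo1=00005D} rather than reconstruct the whole machinery — is checking that this involution is compatible with the biextension/cascade structure, in particular that it exchanges the two factors of $B_1\times B_1$ through their common identification with $\widehat{\mathscr{G}}^{(n-m)m}$; the rest amounts to routine bookkeeping with the $\mathcal{O}_E$-types.
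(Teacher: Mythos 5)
Your sketch is correct and follows exactly the route the paper takes: the paper offers no independent proof of this proposition but simply cites Moonen (\cite{=00005BMo1=00005D}, the remark at the end of Example 3.3.2 and 3.3.3(d), case AU, $r=3$), and your reconstruction via Serre--Tate theory, the split slope filtration, the $3$-cascade/biextension description of the unpolarized $\mathcal{O}_E$-deformation space, and the passage to symmetric elements under the polarization is precisely the content of those references, with the dimension bookkeeping $(n-m)m+m^{2}=nm$ coming out right.
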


\begin{thm}
\label{thm:compatibility with Moonen}Let $x\in S^{\ord}(k)$. Then
$\mathcal{T}S^{+}|_{x}$ is the tangent space to $\widehat{T}_{x}\subset Spf(\widehat{\mathcal{O}}_{S,x})$.
\end{thm}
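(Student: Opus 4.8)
The plan is to match two descriptions of the same $m^2$-dimensional subspace of $\mathcal{T}S_x$: the one coming from the Kodaira--Spencer-theoretic definition $\mathcal{T}S^+_x=KS(\mathcal{P}_0\otimes\mathcal{Q})^\perp_x$, and the one coming from Moonen's cascade structure in $\widehat{\mathcal O}_{S,x}$. Both are intrinsic to the deformation theory of the $p$-divisible group $\mathcal{A}_x[p^\infty]$ together with its $\mathcal{O}_E$-action and polarization, so the comparison should ultimately reduce to a statement about the universal deformation of the $\mu$-ordinary $p$-divisible group $G=(\mathcal{O}_E\otimes\mathbb{Q}_p/\mathbb{Z}_p)^m\times\mathscr{G}_\Sigma^{n-m}\times(\mathcal{O}_E\otimes\mu_{p^\infty})^m$ with PEL structure, as described in (\ref{eq:mu_ord_p_div_gp}).

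First I would recall, following Moonen, that the formal deformation space of $G$ (ignoring the polarization for a moment) carries a $3$-cascade structure: deformations are governed by extensions, and the three ``graded pieces'' correspond to $\mathrm{Ext}$-groups between the \'etale part $G_{et}=(\mathcal{O}_E\otimes\mathbb{Q}_p/\mathbb{Z}_p)^m$, the biconnected part $G_{bc}=\mathscr{G}_\Sigma^{n-m}$, and the multiplicative part $G_{mult}=(\mathcal{O}_E\otimes\mu_{p^\infty})^m$. The ``torus direction'' $\widehat T_x$ is by construction the locus where the biconnected part does not deform and the only deformation is the extension of $G_{mult}$ by $G_{et}$, which is rigidified as a formal torus of dimension $m^2$ (after taking the symmetric part under the polarization involution, which identifies the $\mathrm{Ext}(G_{mult},G_{et})$ and $\mathrm{Ext}(G_{et},G_{mult})$ contributions). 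Thus the tangent space to $\widehat T_x$ is the subspace of $\mathcal{T}S_x=\mathrm{Ext}^1(G,\widehat G_a)$-type deformations on which the deformation of $G_{bc}$, and the extension class of $G_{bc}$ by $G_{et}$, both vanish.

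The key translation step is to express this condition in terms of de~Rham/crystalline data. Under the Kodaira--Spencer isomorphism $KS:\mathcal{P}\otimes\mathcal{Q}\xrightarrow{\sim}\Omega^1_S$, the cotangent space at $x$ is $\mathcal{P}_x\otimes\mathcal{Q}_x$, and a tangent vector $\xi$ pairs with $\alpha\otimes\beta$ via $\{\nabla_\xi\alpha,\beta\}$ (the Lemma preceding Proposition~\ref{prop:Being a foliation}). The Hodge filtration $\omega_{G}\subset H^1_{dR}(G)$ has $\mathcal{P}_x=\omega_G(\Sigma)$, and $\mathcal{P}_0{}_x=\mathcal{P}_x[V]$ is exactly the part of $\omega_G(\Sigma)$ coming from the \'etale part $G_{et}$ (since on the $\mu$-ordinary point $V$ kills precisely the slope-$0$ contribution in the $\Sigma$-piece, which has rank $m$ inside the rank-$n$ space $\mathcal{P}_x$, leaving the rank $n-m$ from the biconnected part; wait --- one must be careful: $\mathcal{P}_0$ has rank $n-m$, so it is the biconnected contribution, not the \'etale one). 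Accordingly $KS(\mathcal{P}_0\otimes\mathcal{Q})$ is the $(n-m)m$-dimensional subspace of $\Omega^1_S$ governing the variation of the Hodge filtration in the biconnected-part directions, and $\mathcal{T}S^+_x$ is its annihilator --- the deformations leaving the biconnected block ``undeformed'' in the appropriate sense. I would then identify, via Grothendieck--Messing / the crystalline nature of the cascade, that this annihilator is exactly the tangent space to $\widehat T_x$: both are cut out by the vanishing of the same Gauss--Manin / extension-class pairing against $\mathcal{P}_0\otimes\mathcal{Q}$.

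The main obstacle, and where the real work lies, is making precise the dictionary between Moonen's cascade coordinates and the crystalline description of the deformation functor, so that ``the biconnected block does not deform, and $\mathrm{Ext}(G_{bc},G_{et})$ vanishes'' becomes literally ``$\xi$ annihilates $KS(\mathcal{P}_0\otimes\mathcal{Q})$''. Concretely, I expect to use the following: (i) the canonical subgroup filtration $0\subset G_{mult}\subset G^0\subset G$ of the $\mu$-ordinary $p$-divisible group (connected part $G^0$, multiplicative part $G_{mult}$), which is horizontal for the Gauss--Manin connection; (ii) the fact that the deformations in $\widehat T_x$ are precisely those preserving the full filtration $0\subset G_{mult}\subset G^0\subset G$ together with the splitting $G^0/G_{mult}\simeq G_{bc}$ over the trivial deformation --- equivalently, those whose Kodaira--Spencer class lies in the ``extension of $G_{mult}$ by $G_{et}$'' direction; and (iii) the compatibility of $KS$ with this filtration, which shows the $\mathcal{P}_0\otimes\mathcal{Q}$ directions are exactly the complementary ones. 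Rather than redo the cascade computation, I would cite \cite{=00005BMo1=00005D}, Example 3.3.2 and 3.3.3(d), for the identification of $\widehat T_x$ with the locus on which the biconnected-part deformation and the corresponding mixed extension class vanish, and then invoke the Lemma and Corollary~\ref{cor: characterization of TS+} to see that this is precisely the condition $\nabla_\xi(\mathcal{P}_0)\subset\mathcal{P}_0$, i.e.\ $\xi\in\mathcal{T}S^+_x$. Since both subspaces have dimension $m^2$ and one is visibly contained in the other, equality follows.
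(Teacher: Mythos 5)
Your proposal follows essentially the same route as the paper's proof: restrict the family to Moonen's canonical formal torus $\widehat{T}_{x}$, use the horizontality of the canonical (multiplicative--connected--\'etale) filtration together with the constancy of the biconnected/connected pieces over $\widehat{T}_{x}$ (from the cascade construction in \cite{=00005BMo1=00005D}) to deduce that $\nabla_{\xi}$ preserves $\mathcal{P}_{0}$ for $\xi$ tangent to $\widehat{T}_{x}$, hence $\mathcal{T}\widehat{T}_{x}|_{e}\subset\mathcal{T}S^{+}|_{x}$ via Corollary~\ref{cor: characterization of TS+}, and conclude by comparing dimensions. This matches the paper's argument (which phrases the last step as the vanishing of the pulled-back Kodaira--Spencer map on $\mathcal{P}_{0}\otimes\mathcal{Q}$, an equivalent formulation), and your self-correction identifying $\mathcal{P}_{0}$ with the biconnected rather than the \'etale contribution is the right one.
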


\begin{proof}
Let $i:\widehat{T}_{x}\hookrightarrow Spf(\widehat{\mathcal{O}}_{S,x})$
be the embedding of formal schemes given by Proposition \ref{Moonen}.
It sends the origin $e$ of $\widehat{T}_{x}$ to $x.$ Let $i_{*}$
be the induced map on tangent spaces
\[
i_{*}:\mathcal{T}\widehat{T}_{x}|_{e}\hookrightarrow\mathcal{T}S|_{x}.
\]
We have to show that $i_{*}(\mathcal{T}\widehat{T}_{x}|_{e})$ annihilates
$KS(\mathcal{P}_{0}\otimes\mathcal{Q})|_{x}.$ This is equivalent
to saying that when we consider the pull back $i^{*}\mathcal{A}$
of the universal abelian scheme to $\widehat{T}_{x}$, \emph{its}
Kodaira-Spencer map kills $\mathcal{P}_{0}\otimes\mathcal{Q}|_{e}$.
For this recall the definition of $KS=KS(\Sigma)$.

Let $\mathfrak{S}=\widehat{T}_{x}$ and write for simplicity $\mathcal{A}$
for $i^{*}\mathcal{A}$. We then have the following commutative diagram
\begin{equation}
\begin{array}{ccc}
\mathcal{P}=\omega_{\mathcal{A}/\mathfrak{S}}(\Sigma) & \hookrightarrow & H_{dR}^{1}(\mathcal{A}/\mathfrak{S})(\Sigma)\\
\downarrow KS &  & \downarrow\nabla\\
\mathcal{Q}^{\vee}\otimes\Omega_{\mathfrak{S}}^{1}\simeq\omega_{\mathcal{A}^{t}/\mathfrak{S}}^{\vee}(\Sigma)\otimes\Omega_{\mathfrak{S}}^{1} & \longleftarrow & H_{dR}^{1}(\mathcal{A}/\mathfrak{S})(\Sigma)\otimes\Omega_{\mathfrak{S}}^{1}
\end{array}\label{eq:KS}
\end{equation}
in which we identified $H^{1}(\mathcal{A},\mathcal{O})$ with $H^{0}(\mathcal{A}^{t},\Omega_{\mathcal{A}^{t}/\mathfrak{S}}^{1})^{\vee}$
and used the polarization to identify the latter with $\omega_{\mathcal{A}/\mathfrak{S}}^{\vee}$,
reversing types. Here $\nabla$ is the Gauss-Manin connection, and
the tensor product is over $\mathcal{\widehat{O}_{\mathfrak{S}}}.$
Although $\nabla$ is a derivation, $KS$ is a homomorphism of vector
bundles over $\mathcal{\widehat{O}_{\mathfrak{S}}}$. We shall show
that $KS(\mathcal{P}_{0})=0,$ where $\mathcal{P}_{0}=\ker(V:\omega_{\mathcal{A}/\mathfrak{S}}\to\omega_{\mathcal{A}/\mathfrak{S}}^{(p)})\cap\mathcal{P}$.

At this point recall the filtration
\[
0\subset Fil^{2}=\mathcal{A}[p^{\infty}]^{mult}\subset Fil^{1}=\mathcal{A}[p^{\infty}]^{conn}\subset Fil^{0}=\mathcal{A}[p^{\infty}]
\]
of the $p$-divisible group of $\mathcal{A}$ over $\mathfrak{S}.$
The graded pieces are of height $2m,$ $2(n-m)$ and $2m$ respectively,
and $\mathcal{O}_{E}$-stable. They are given by
\[
gr^{2}=(\mathcal{O}_{E}\otimes\mu_{p^{\infty}})^{m},\,\,\,gr^{1}=\mathscr{G}_{\Sigma}^{n-m},\,\,\,gr^{0}=(\mathcal{\mathcal{O}}_{E}\otimes\mathbb{Q}_{p}/\mathbb{Z}_{p})^{m}.
\]

For any $p$-divisible group $G$ over $\mathfrak{S}$ denote by $\mathbb{D}(G)$
the Dieudonn\'e crystal associated to $G$, and let $D(G)=\mathbb{D}(G)_{\mathfrak{S}},$
\emph{cf. }\cite{=00005BGro=00005D}. The $\mathcal{\widehat{O}_{\mathfrak{S}}}$-module
$D(G)$ is endowed with an integrable connection $\nabla$ and the
pair $(D(G),\nabla)$ determines $\mathbb{D}(G).$

In our case, we can identify $D(\mathcal{A}[p^{\infty}])$ with $H_{dR}^{1}(\mathcal{A}/\mathfrak{S})$,
and the connection with the Gauss-Manin connection. The above filtration
on $\mathcal{A}[p^{\infty}]$ induces therefore a filtration $Fil^{\bullet}$
on $H_{dR}^{1}(\mathcal{A}/\mathfrak{S})$ which is preserved by $\nabla$.
Since the functor $\mathbb{D}$ is contravariant, we write the filtration
as
\[
0\subset Fil^{1}H_{dR}^{1}(\mathcal{A}/\mathfrak{S})\subset Fil^{2}H_{dR}^{1}(\mathcal{A}/\mathfrak{S})\subset Fil^{3}=H_{dR}^{1}(\mathcal{A}/\mathfrak{S})
\]
where 
\[
Fil^{i}H_{dR}^{1}(\mathcal{A}/\mathfrak{S})=D(\mathcal{A}[p^{\infty}]/Fil^{i}\mathcal{A}[p^{\infty}]).
\]
For example, $Fil^{1}H_{dR}^{1}(\mathcal{A}/\mathfrak{S})$ is sometimes
referred to as the ``unit root subspace''. As $Fil^{2}\mathcal{A}[p^{\infty}]$
is of multiplicative type, $\ker(V:H_{dR}^{1}(\mathcal{A}/\mathfrak{S})\to H_{dR}^{1}(\mathcal{A}/\mathfrak{S})^{(p)})$
is contained in $Fil^{2}H_{dR}^{1}(\mathcal{A}/\mathfrak{S})$. In
particular,
\[
\mathcal{P}_{0}\subset Fil^{2}H_{dR}^{1}(\mathcal{A}/\mathfrak{S}).
\]
Let $G=\mathcal{A}[p^{\infty}]/\mathcal{A}[p^{\infty}]^{mult}$, so
that $Fil^{2}H_{dR}^{1}(\mathcal{A}/\mathfrak{S})=D(G).$ It follows
that in computing $KS$ on $\mathcal{P}_{0}$ we may use the following
diagram instead of (\ref{eq:KS}):

\begin{equation}
\begin{array}{ccc}
\mathcal{P}_{0} & \hookrightarrow & D(G)(\Sigma)\\
\downarrow KS &  & \downarrow\nabla\\
\mathcal{Q}^{\vee}\otimes\Omega_{\mathfrak{S}}^{1} & \longleftarrow & D(G)(\Sigma)\otimes\Omega_{\mathfrak{S}}^{1}
\end{array}\label{eq:KS2}
\end{equation}

Finally, we have to use the description of the formal neighborhood
of $x$ as given in \cite{=00005BMo1=00005D}. Since we are considering
the pull-back of $\mathcal{A}$ to $\mathfrak{S}$ only, and not the
full deformation over $Spf(\widehat{\mathcal{O}}_{S,x})$, it follows
from the construction of the 3-cascade (biextension) in \emph{loc.cit.
}\S2.3.6 that the $p$-divisible groups $Fil^{1}\mathcal{A}[p^{\infty}]$,
and dually $G=\mathcal{A}[p^{\infty}]/Fil^{2}$, are \emph{constant}
over $\mathfrak{S}$. Thus over $\mathfrak{S}$
\[
G\simeq\mathscr{G}^{n-m}\times(\mathcal{O}_{E}\otimes\mathbb{Q}_{p}/\mathbb{Z}_{p})^{m},
\]
and $\nabla$ maps $D(\mathscr{G}^{n-m})$ to $D(\mathscr{G}^{n-m})\otimes\Omega_{\mathfrak{S}}^{1}$.
Since
\[
\mathcal{P}_{0}=\omega_{\mathscr{G}^{n-m}}=D(\mathscr{G}^{n-m})(\Sigma)
\]
as subspaces of $H_{dR}^{1}(\mathcal{A}/\mathfrak{S}),$ 
\[
\nabla(\mathcal{P}_{0})\subset\mathcal{P}_{0}\otimes\Omega_{\mathfrak{S}}^{1}.
\]
The bottom arrow in (\ref{eq:KS2}) comes from the homomorphism
\[
D(G)(\Sigma)\hookrightarrow H_{dR}^{1}(\mathcal{A}/\mathfrak{S})(\Sigma)\overset{pr}{\to}H^{1}(\mathcal{A},\mathcal{O})(\Sigma)\overset{\phi}{\simeq}H^{1}(\mathcal{A}^{t},\mathcal{O})(\overline{\Sigma})=\mathcal{Q}^{\vee}.
\]
But the projection $pr$ kills $\mathcal{P}_{0}\subset\omega_{\mathcal{A}/\mathfrak{S}}.$
This concludes the proof.
\end{proof}
\textbf{Remark. }Proposition \ref{Moonen} yields a natural integral
``formal submanifold'' to the height 1 foliation $\mathcal{T}S^{+}$
in a formal neighborhood of any ordinary point. As mentioned in 
S\ref{subsec:Foliations-and-inseparable},
integral submanifolds to height 1 foliations are ubiquitous. On the
other hand we were not able to lift $\mathcal{T}S^{+}$ to an $h$-foliation
in the sense of \cite{=00005BEk=00005D} for $h>1,$ and we do not
believe that they lift to characteristic 0 as in \cite{=00005BMi=00005D}.
The meaning of these ``natural'' formal submanifolds from the point
of view of foliations remains mysterious.

\section{\label{sec:Extending-the-ordinary}Extending the foliation beyond
the ordinary locus}

In this section we discuss the extension of the foliation $\mathcal{T}S^{+}$
from $S^{\ord}$ to a certain ``successive blow-up'' $S^{\sharp}$
of $S.$ We define a finite flat morphism from $S^{\sharp}$ to the
Zariski closure $S_{0}(p)_{et}$ of $S_{0}(p)_{et}^{\ord}$, extending
the morphism from $S^{\ord}$ to $S_{0}(p)_{et}^{\ord}$, and show
that this map is the quotient by the extended foliation. In this section
we shall use the results on the Ekedahl-Oort stratification summarized
in \S\ref{subsec:The-NP-and EO}.

\subsection{\label{subsec:The-moduli-scheme S=000023}The moduli scheme $S^{\sharp}$}

\subsubsection{Definition and general properties}

Recall (\S\ref{subsec:Frobenius,-Verschiebung-and}) that the map $V_{\mathcal{P}}$
induced by Verschiebung maps $\mathcal{P}$ to $\mathcal{Q}^{(p)}$,
hence its kernel at any point of $S$ is at least $(n-m)$-dimensional.
Over $S^{\ord},$ but not only there, $V_{\mathcal{P}}$ maps $\mathcal{P}$
onto $\mathcal{Q}^{(p)},$ so the kernel $\mathcal{P}[V]$ is precisely
of dimension $n-m.$ 

Define a moduli problem $S^{\sharp}$ on $\kappa$-algebras $R$ by
setting
\[
S^{\sharp}(R)=\left\{ (\underline{A},\mathcal{P}_{0})|\,\,\underline{A}\in S(R),\,\,\mathcal{P}_{0}\subset\mathcal{P}[V]\,\,\text{a subbundle of rank \ensuremath{n-m}}\right\} .
\]
There is a forgetful map $f:S^{\sharp}\to S$, which is bijective
over $S^{\ord}.$ Let $Gr(n-m,\mathcal{P})$ be the relative Grassmanian
over $S$ classifying sub-bundles $\mathcal{N}$ of rank $n-m$ in
$\mathcal{P}.$ It is a smooth scheme over $S$, of relative dimension
$(n-m)m.$ As the condition $V(\mathcal{N})=0$ is closed, the moduli
problem $S^{\sharp}$ is representable by a closed subscheme of $Gr(n-m,\mathcal{P})$.
The fiber $S_{x}^{\sharp}=f^{-1}(x)$ is the Grassmanian of $(n-m)$-dimensional
subspaces in $\mathcal{P}_{x}[V]$, and if $x\in S_{w}$ its dimension
is, in the notation of \S\ref{subsec:The-NP-and EO},
\[
\dim S_{x}^{\sharp}=(n-m)(a_{\Sigma}(w)-n+m).
\]

Denote by $S_{\sharp}$ the open subset of $S$ where $f$ is an isomorphism,
i.e. where $a_{\Sigma}(w)=n-m.$ It is a union of EO strata, containing
$S^{\ord}.$ 

For any $(n,m)$-shuffle $w$ denote the pre-image of the EO stratum
$S_{w}$ by
\[
S_{w}^{\sharp}=f^{-1}(S_{w}).
\]
\begin{prop}
The open set $S_{\sharp}$ contains $\binom{n}{m}$ EO strata. It
contains a unique minimal stratum in the EO order, denoted $S_{\fol}$,
which is of dimension $m^{2}.$
\end{prop}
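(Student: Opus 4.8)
The plan is to translate the geometric condition ``$a_{\Sigma}(w)=n-m$'' into pure combinatorics of $(n,m)$-shuffles, using formula (\ref{eq:sigma_part_a_number}), and then to count and to find the minimum in the EO order. Recall that for $w\in\Pi(n,m)$ we have $a_{\Sigma}(w)=|\{i\mid 1\le i\le n,\ 1\le w^{-1}(i)\le n\}|$. Since $w^{-1}$ restricted to $\{1,\dots,n\}$ is an increasing function into $\{1,\dots,n+m\}$, the set $A(w):=\{w^{-1}(1)<\cdots<w^{-1}(n)\}$ is an $n$-element subset of $\{1,\dots,n+m\}$, and $a_{\Sigma}(w)=|A(w)\cap\{1,\dots,n\}|$; its complement $\{1,\dots,n+m\}\setminus A(w)=\{w^{-1}(n+1)<\cdots<w^{-1}(n+m)\}$ determines $w$ uniquely. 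Thus $(n,m)$-shuffles correspond bijectively to $n$-subsets of $\{1,\dots,n+m\}$, and $S_{\sharp}$ corresponds to those $A$ with $|A\cap\{1,\dots,n\}|=n-m$, equivalently $|A\cap\{n+1,\dots,n+m\}|=m$, i.e. $\{n+1,\dots,n+m\}\subseteq A$. Since the remaining $n-m$ elements of $A$ are chosen freely from $\{1,\dots,n\}$, there are exactly $\binom{n}{n-m}=\binom{n}{m}$ such shuffles, giving the first assertion.

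For the second assertion I would first identify the candidate minimal stratum. The condition $\{n+1,\dots,n+m\}\subseteq A(w)$ forces $w^{-1}$ to send the last $m$ values $n+1,\dots,n+m$ to positions inside $A(w)$, but the extremal choice is to take $A(w)=\{m+1,\dots,n+m\}$, i.e.\ $w^{-1}$ maps $\{1,\dots,n\}$ onto $\{m+1,\dots,n+m\}$ (increasingly) and $\{n+1,\dots,n+m\}$ onto $\{1,\dots,m\}$ (increasingly). Concretely this is the shuffle $w_{\fol}$ with $w_{\fol}^{-1}(i)=i+m$ for $1\le i\le n$ and $w_{\fol}^{-1}(n+j)=j$ for $1\le j\le m$. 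By formula (\ref{eq:length_of_w}), $l(w_{\fol})=\sum_{i=1}^{n}(w_{\fol}^{-1}(i)-i)=\sum_{i=1}^{n}m=nm$ --- wait, that is wrong; I must recompute: $w_{\fol}^{-1}(i)-i=m$ gives $\sum_{i=1}^n m = nm$, which is too big, so the extremal shuffle in $S_\sharp$ is not this one. Rather, to minimize $l(w)$ I want $A(w)$ as small as possible lexicographically, i.e.\ $A(w)=\{1,\dots,n-m\}\cup\{n+1,\dots,n+m\}$. Then $w^{-1}$ sends $1,\dots,n-m$ to $1,\dots,n-m$ and $n-m+1,\dots,n$ to $n+1,\dots,n+m$, so $l(w)=\sum_{i=n-m+1}^{n}(w^{-1}(i)-i)=\sum_{i=n-m+1}^{n}(m)=m\cdot m=m^{2}$, matching the claimed dimension. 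Call this shuffle $w_{\fol}$.

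Next I must show $w_{\fol}$ is the \emph{unique minimal} element, in the EO order, among the $\binom{n}{m}$ shuffles in $S_{\sharp}$. Here I use the only property of the EO order recorded in the excerpt: if $w'\le w$ in the Bruhat order then $w'\preceq w$. It therefore suffices to prove two things: (a) $w_{\fol}\le w$ in the Bruhat order for every $w\in S_{\sharp}$; and (b) $w_{\fol}$ itself lies in $S_{\sharp}$ (already checked, since $A(w_{\fol})\supseteq\{n+1,\dots,n+m\}$). For (a), I recall the standard combinatorial (``tableau'' or ``dot'') criterion for Bruhat order: for $u,v\in\mathfrak{S}_{n+m}$, $u\le v$ iff for all $i$ the sorted prefix $\{u^{-1}(1),\dots,u^{-1}(i)\}$ is dominated entrywise by $\{v^{-1}(1),\dots,v^{-1}(i)\}$ --- more precisely, writing the one-line notation and comparing sorted initial segments. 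Since all shuffles in $S_{\sharp}$ share the same ``descent set''/parabolic coset shape (they are the minimal-length coset representatives for $W_J\backslash W$), comparing them reduces to comparing the associated subsets $A(w)\subseteq\{1,\dots,n+m\}$ in the componentwise (dominance) order on sorted tuples; and $A(w_{\fol})=\{1,\dots,n-m,n+1,\dots,n+m\}$ is componentwise $\le$ every $A(w)$ with $\{n+1,\dots,n+m\}\subseteq A(w)$, because the smallest $n-m$ elements one can put in such an $A(w)$ besides $\{n+1,\dots,n+m\}$ are at least $1,2,\dots,n-m$. This gives $w_{\fol}\le w$, hence $w_{\fol}\preceq w$, hence $\overline{S_{w_{\fol}}}\subseteq\overline{S_w}$ and in particular $S_{w_{\fol}}$ meets the closure of every stratum in $S_\sharp$; combined with $\dim S_{w_{\fol}}=l(w_{\fol})=m^{2}$ being strictly smaller than $l(w)$ for any other $w\in S_\sharp$, uniqueness of the minimum follows. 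I would then set $S_{\fol}:=S_{w_{\fol}}$.

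The main obstacle is step (a): making rigorous the reduction of the Bruhat comparison of two elements of $\Pi(n,m)$ to the componentwise order on the subsets $A(w)$. This is a known fact about minimal coset representatives and the induced Bruhat order on $W_J\backslash W$ (which is isomorphic to the dominance order on the relevant Grassmannian permutations / subsets), but it needs to be cited or proved carefully, paying attention to the distinction between $w$ and $w^{-1}$ and to the precise identification of $\Pi(n,m)$ with subsets made via (\ref{eq:length_of_w})--(\ref{eq:sigma_part_a_number}). Everything else --- the count $\binom{n}{m}$, the length computation giving $m^{2}$, and the passage from Bruhat minimality to EO minimality --- is routine once the dictionary between shuffles and subsets is set up.
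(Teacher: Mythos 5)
Your proposal is correct and follows essentially the same route as the paper: translate $a_{\Sigma}(w)=n-m$ via formula (\ref{eq:sigma_part_a_number}) into the condition that $A(w)\supseteq\{n+1,\dots,n+m\}$ (equivalently, that $w$ lies in the image of $\Pi(n-m,m)$), count $\binom{n}{m}$, exhibit the same $w_{\fol}$ of length $m^{2}$, and pass from Bruhat minimality to EO minimality using $w'\le w\Rightarrow w'\preceq w$. The one step you flag as needing care --- that Bruhat order on these minimal coset representatives is the componentwise (Gale) order on the subsets $A(w)$ --- is a standard fact about Grassmannian permutations and is exactly the point the paper also asserts without proof, so your argument is complete modulo the same citation.
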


\begin{proof}
Using the labeling of the EO strata by the set $\Pi(n,m)$ of $(n,m)$-shuffles
in $\mathfrak{S}_{n+m}$, and formula (\ref{eq:sigma_part_a_number}),
we see that $a_{\Sigma}(w)=n-m$ if and only if
\[
w^{-1}(n-m+j)=n+j
\]
for all $1\le j\le m.$ Thus, the set of $w$ satisfying $a_{\Sigma}(w)=n-m$
is in bijection with $\Pi(n-m,m),$ the set of $(n-m,m)$-shuffles
in $\mathfrak{S}_{n}.$ More precisely, we have to arrange the numbers
\[
\{1,\dots,n-m;n+1,\dots n+m\}
\]
in the interval $[1,n]$, preserving the order within each block.
There are $\binom{n}{m}$ such shuffles.

Let $\iota:\Pi(n-m,m)\hookrightarrow\Pi(n,m)$ be the inclusion described
above. The element
\[
w_{\fol}=\left(\begin{array}{ccccccccc}
1 & \dots & n-m & n-m+1 & \dots & n & n+1 & \dots & n+m\\
1 & \dots & n-m & n+1 & \dots & n+m & n-m+1 & \dots & n
\end{array}\right)
\]
belongs to $\iota(\Pi(n-m,m))$ and is the unique minimal element
there in the usual Bruhat order. From the remark at the end of \S\ref{subsec:The-NP-and EO}
we deduce that it is also the unique minimal element among $\iota(\Pi(n-m,m))$
in the EO order $\preceq.$ This $w_{\fol}$ must represent a stratum
$S_{\fol}=S_{w_{\fol}}$ of minimal dimension among the EO strata
in $S_{\sharp}.$ Since $\dim S_{w}=l(w)$ we conclude that its dimension
is $l(w_{\fol})=m^{2}.$
\end{proof}
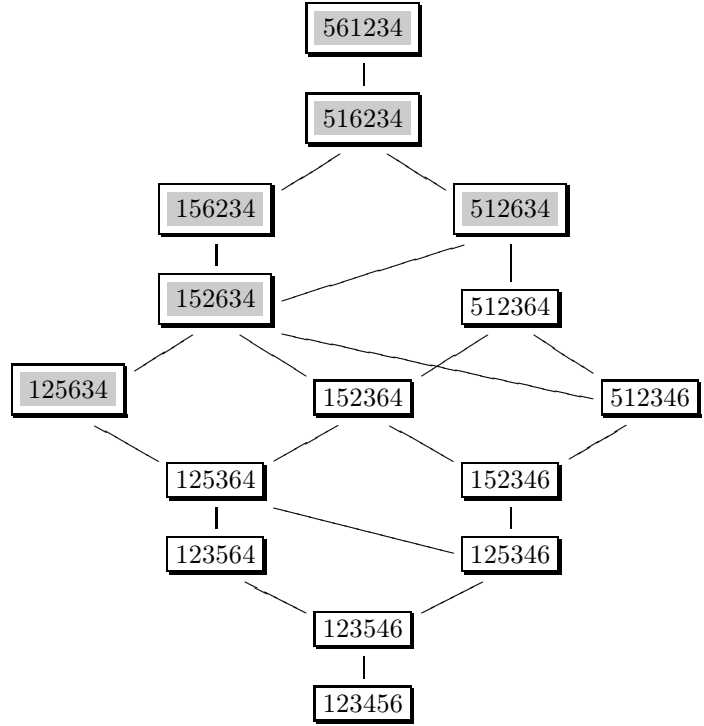
\begin{figure}[h]
\caption{\label{fig:The-EO-strata}The EO strata for $(n,m)=(4,2)$}

\definecolor{light-gray}{gray}{0.8}
\setlength{\shadowsize}{1pt}
\[\xymatrix@C=6pt@R=8pt{&&\shadowbox{\colorbox{light-gray}{561234}}\ar@{-}[d]&&\\
&&\shadowbox{\colorbox{light-gray}{516234}}\ar@{-}[dl]\ar@{-}[dr]&&\\
& \shadowbox{\colorbox{light-gray}{156234}}\ar@{-}[d] & &\shadowbox{\colorbox{light-gray}{512634}}\ar@{-}[dll]\ar@{-}[d]&\\
& \shadowbox{\colorbox{light-gray}{152634}}\ar@{-}[dl]\ar@{-}[dr]\ar@{-}[drrr] & &\shadowbox{512364}\ar@{-}[dl]\ar@{-}[dr]&\\
\shadowbox{\colorbox{light-gray}{125634}}\ar@{-}[dr]& & \shadowbox{152364}\ar@{-}[dl]\ar@{-}[dr]&&\shadowbox{512346}\ar@{-}[dl]\\
&\shadowbox{125364}\ar@{-}[d]\ar@{-}[drr] && \shadowbox{152346}\ar@{-}[d]&\\
&\shadowbox{123564}\ar@{-}[dr] && \shadowbox{125346}\ar@{-}[dl]&\\
& & \shadowbox{123546}\ar@{-}[d]&&\\
& & \shadowbox{123456}&&\\
}\]
\end{figure}

In Figure \ref{fig:The-EO-strata}, taken from \cite{=00005BWoo=00005D},
we illustrate the EO stratification when $(n,m)=(4,2)$. There are
15 EO strata altogether, labeled by $(4,2)$-shuffles $w$. We write
the $(4,2)$-shuffle $w$ as $w(1)\dots w(6).$ The strata are arranged
from top to bottom in rows, according to their dimension (equal to
the length of $w$). The top row contains only $S^{\ord},$ whose
dimension is 8, and the bottom row contains only the core stratum
in dimension 0. The EO order relation is represented by downward lines.
The 6 strata in $S_{\sharp}$ are those in which $w$ ends with $(...34)$.
The lowest one, $S_{\fol}$, has dimension 4. Note that $S_{\sharp}$
contains two 6-dimensional strata.

\medskip{}

By construction, $S^{\sharp}$ carries a tautological sub-bundle
\[
\mathcal{P}_{0}\subset f^{*}\mathcal{P}
\]
of rank $n-m,$ which extends the sub-bundle $\mathcal{P}_{0}$ defined
on $S^{\ord}.$ As long as we are above $S_{\sharp}$ it can be viewed
as a bundle on $S.$

\subsubsection{Example: the case of U(n,1)}

This case is particularly simple. There are $n+1$ EO strata, and
all of them, except for the core points, lie in $S_{\sharp}.$ The
fibers of $S^{\sharp}$ at the core points are projective spaces of
dimension $n-1$. In fact, at such a point $x\in S$ we have $\mathcal{P}_{x}[V]=\mathcal{P}_{x},$
because $\mathcal{A}_{x}$ is superspecial, so there is a \emph{canonical
}identification
\[
S_{x}^{\sharp}=Gr(n-1,\mathcal{P}_{x})=Gr(n-1,\mathcal{P}_{x}\otimes\mathcal{Q}_{x})
\]
because $\mathcal{Q}$ is a line bundle. But under the Kodaira-Spencer
map $\mathcal{P}_{x}\otimes\mathcal{Q}_{x}$ is identified with the
cotangent space of $S$ at $x,$ so by duality we have a canonical
identification of $S_{x}^{\sharp}$ with $Gr(1,\mathcal{T}S_{x}).$
In fact, $S^{\sharp}$ is the blow-up of $S$ at the core points.

\subsubsection{Smoothness and irreducibility}
\begin{thm}
\label{thm:smoothness of S=000023}The scheme $S^{\sharp}$ is non-singular
and $f$ induces a bijection on irreducible components.
\end{thm}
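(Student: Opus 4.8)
The plan is to verify smoothness by a tangent-space (or local-model) computation at a point of $S^{\sharp}$ and to deduce the bijectivity on components from the fact that the fibers of $f$ are irreducible together with the surjectivity of $f$ and the smoothness of $S$.

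First I would set up the deformation theory. Fix a $\kappa$-algebra $R$ with a square-zero ideal $I$, an $R/I$-point $(\underline{A}_0,\mathcal{P}_{0,0})$ of $S^{\sharp}$, and ask for the $R$-points lifting it. Since $\mathcal{S}$ is smooth, the PEL object $\underline{A}_0$ lifts (unobstructed) to $R$, and the set of lifts is a torsor under $H^1$ of the relevant deformation complex, which by Kodaira--Spencer is identified with $\Hom(\mathcal{P}\otimes\mathcal{Q},I)$. Over each such lift $\underline{A}$ one must then lift the sub-bundle $\mathcal{P}_{0,0}\subset\mathcal{P}_0[V]$ to a sub-bundle $\mathcal{P}_0\subset\mathcal{P}[V]$ over $R$. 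The point is that $\mathcal{P}[V]$ is generally \emph{not} a sub-bundle, so one cannot just use the smoothness of the Grassmannian; instead one works directly with the condition $V_{\mathcal{P}}(\mathcal{P}_0)=0$. Lifting a sub-bundle of the ambient bundle $\mathcal{P}$ is unobstructed (the Grassmannian $Gr(n-m,\mathcal{P})$ is smooth over $S$), so the only issue is whether the lifted sub-bundle can be chosen inside the kernel of $V_{\mathcal{P}}$, and whether the space of such choices has the expected dimension.

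The key local computation is therefore the following: linearize the map $V_{\mathcal{P}}:\mathcal{P}\to\mathcal{Q}^{(p)}$ near the given point. Because $V$ is $p$-linear, its differential (the induced map on the Zariski tangent space of the moduli of the datum plus the choice of $\mathcal{P}_0$) involves only the \emph{linear} part coming from varying $\mathcal{P}_0$, not from varying $\underline{A}$ — deforming $\underline{A}$ changes $V_{\mathcal{P}}$ by something that already vanishes on $\mathcal{P}_{0,0}$ to first order, by the defining property $V_{\mathcal{P}}(\mathcal{P}_{0,0})=0$ and $p$-linearity. Concretely, writing $\overline{V}:\mathcal{P}_0/(\text{something})\to\mathcal{Q}^{(p)}/(\text{something})$ for the map induced by $V_{\mathcal{P}}$ on the relevant graded piece, one shows the deformation functor of $S^{\sharp}$ is formally smooth of relative dimension $(n-m)(a_\Sigma(w)-n+m)$ over that of $S$, matching the fiber dimension $\dim S_x^{\sharp}$ computed above. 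Adding $\dim S_w=l(w)$ at a point of $S_w^{\sharp}$ shows all of $S^{\sharp}$ is smooth of the expected dimension; in particular $S^{\sharp}$ is equidimensional, of dimension $nm$ (the maximum, attained over $S^{\ord}$, and one checks the lower strata contribute the same total since $l(w)+(n-m)(a_\Sigma(w)-n+m)$ is constant — this is exactly the combinatorial identity one reads off the shuffle description). \textbf{This smoothness/dimension-count step is the main obstacle}, since it requires carefully disentangling the $p$-linear nature of $V$ from the honest linear deformations and knowing that the expected dimension is actually achieved rather than just being an upper bound.

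For the statement that $f$ induces a bijection on irreducible components: $f$ is proper (it is a closed subscheme of $Gr(n-m,\mathcal{P})\to S$, which is projective) and surjective, with geometrically connected fibers — indeed each fiber $S_x^{\sharp}$ is a Grassmannian, hence irreducible. A proper surjective morphism with irreducible fibers from an equidimensional scheme, whose target $S$ is smooth (hence its irreducible components are its connected components of each piece) and whose source $S^{\sharp}$ is smooth (hence equidimensional on each connected component) of the \emph{same} dimension $nm$, must carry irreducible components to irreducible components bijectively: the image of an irreducible component of $S^{\sharp}$ is an irreducible closed subset of $S$ of dimension $\le nm$ whose preimage is all of that component, and fiber-dimension upper-semicontinuity plus the dimension count forces the image to be a full irreducible component of $S$; conversely $f^{-1}$ of an irreducible component of $S$ is irreducible because over the open dense ordinary part it is isomorphic to that component, and $S^{\sharp}$ being smooth means $f^{-1}$(component) is the closure of its intersection with $S^{\sharp}\times_S S^{\ord}$. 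I would spell this out as a short formal argument, invoking Theorem~\ref{thm:smoothness of S=000023}'s smoothness (just proved) and the density of $S^{\ord}$ in $S$.
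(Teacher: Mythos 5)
Your central idea --- that $V$ is insensitive to first--order deformations (Frobenius kills a square-zero ideal, so $V_{cris}=V_{dR}\otimes 1$), whence the condition on the deformed $\mathcal{P}_0$ is just that it stay inside the fixed subspace $H_0\otimes_k k[\epsilon]$, $H_0:=H^1_{dR}(\mathcal{A}_x)[V]$ --- is exactly the key point of the paper's proof. But the way you assemble the dimension count contains a genuine error. The deformation functor of $S^{\sharp}$ is \emph{not} formally smooth over that of $S$: a first-order deformation $\varphi\in\Hom_{\kappa}(W,H/W)^{sym}$ of $\underline{A}$ admits a compatible lift of $\mathcal{P}_0$ only if $\varphi(P_0)\subset (H_0+W)/W$, and since $\dim(H_0(\Sigma)+P)=2n-a_{\Sigma}(w)$, this is a nontrivial condition precisely off $S_{\sharp}$ (i.e. when $a_{\Sigma}(w)>n-m$). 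The correct tangent space is the incidence space $\{(\varphi,\psi):\varphi|_{P_0}=\psi\bmod W\}$, of dimension $(n-m)m+m^2=nm$ at \emph{every} point --- not $\dim T_xS+\dim S^{\sharp}_x$, which is strictly larger than $nm$ at deep strata and would not prove smoothness. Relatedly, the combinatorial identity you invoke, that $l(w)+(n-m)(a_{\Sigma}(w)-n+m)$ is constant equal to $nm$, is false: for $w=id$ it gives $0+(n-m)m<nm$, and in the $U(n,1)$ case the exceptional fibers are $(n-1)$-dimensional projective spaces over $0$-dimensional strata, so $\dim S_w^{\sharp}=n-1<n$. Thus $S^{\sharp}$ is not a union of equidimensional pieces $S_w^{\sharp}$ of dimension $nm$, and your route to equidimensionality (and hence to the component count) collapses.

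The repair is the paper's argument. One shows $\dim S^{\sharp}(k[\epsilon])_y=nm$ at every $k$-point $y$ by the incidence computation: $\psi$ ranges over $\Hom(P_0,H_0(\Sigma)/P_0)$, which has dimension $(n-m)m$ because $\dim H_0(\Sigma)=n$ (as $H_0\simeq H^{(p)}/W^{(p)}$ has type $(n,m)$); given $\psi$, the restriction $\varphi|_{P_0}$ is determined modulo $W$, and extending $\varphi$ from $P_0$ to $P$ contributes $m\cdot m=m^2$. Then the closure $Y$ in $S^{\sharp}$ of a component of $S^{\ord}$ has dimension $nm$, so every point of $Y$ is a regular point of $S^{\sharp}$ lying on a unique component; the connectedness (indeed irreducibility) of the Grassmannian fibers of $f$ then excludes any further component of $S^{\sharp}$ inside $f^{-1}(f(Y))$, which yields both smoothness and the bijection on components. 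Your argument for the bijection is close to this in spirit, but as written it leans on the equidimensionality and formal smoothness you have not (and cannot) establish.
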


\begin{proof}
We work over an algebraically closed field $k$ containing $\kappa$.
Let $y\in S^{\sharp}(k)$ and $x=f(y).$ Let $k[\epsilon]$ be the
ring of dual numbers. Denote by $S^{\sharp}(k[\epsilon])_{y}$ the
tangent space at $y$ to the moduli problem $S^{\sharp}.$ This is
the set of elements in $S^{\sharp}(k[\epsilon])$ mapping to $y$
modulo $\epsilon,$ equipped with the natural structure of a $k$-vector
space. We shall show that 
\begin{equation}
\dim S^{\sharp}(k[\epsilon])_{y}=nm.\label{eq:dimension}
\end{equation}

Let us first see how this implies the theorem. Let $Y$ be the Zariski
closure of an irreducible component of $S^{\ord}$ in $S^{\sharp}.$
It is $nm$ dimensional, hence (\ref{eq:dimension}), applied to $y\in Y$,
shows that $Y$ is non-singular, and any other irreducible component
of $S^{\sharp}$ is disjoint from $Y.$ Since the fibers of $f$ are
connected, there do not exist any other irreducible components in
$f^{-1}(f(Y))$. It remains to prove (\ref{eq:dimension}).

Standard techniques in deformation theory show that we have to compute
the tangent space to a certain incidence variety between Grassmanians
(see \cite{=00005BHar=00005D}, Example 16.2). We introduce the following
notation:
\[
W=\omega_{\mathcal{A}_{x}/k}\subset H=H_{dR}^{1}(\mathcal{A}_{x}),\,\,\,\,H_{0}=H_{dR}^{1}(\mathcal{A}_{x})[V]
\]
and

\[
P_{0}=\mathcal{P}_{0,y}\subset H_{0}\cap W.
\]
These are $k$-vector spaces with $\kappa$ action. The polarization
pairing $\left\{ ,\right\} _{\phi}:H\times H\to k$ induces a perfect
pairing 
\[
\left\{ ,\right\} _{\phi}:W\times H/W\to k,
\]
satisfying $\{\iota(a)u,v\}_{\phi}=\{u,\iota(\overline{a})v\}_{\phi}.$
We claim that $S^{\sharp}(k[\epsilon])_{y}$ is identified with
\[
\left\{ (\varphi,\psi)|\,\,\varphi\in\Hom_{\kappa}(W,H/W)^{sym},\,\,\psi\in\Hom_{\kappa}(P_{0},H_{0}/P_{0}),\,\,\varphi|_{P_{0}}=\psi\mod W\right\} .
\]
Indeed, by Grothendieck's crystalline deformation theory \cite{=00005BGro=00005D},
$S(k[\epsilon])_{x}$ is identified with $\Hom_{\kappa}(W,H/W)^{sym}.$
The superscript $sym$ refers to homomorphisms symmetric with respect
to $\{,\}_{\phi},$ i.e. satisfying $\{w,\varphi(w')\}_{\phi}=\{w',\varphi(w)\}_{\phi}$
for all $w,w'\in W$. 

The space $\Hom_{\kappa}(P_{0},H_{0}/P_{0})$ classifies infinitesimal
deformations of $P_{0}$ preserving the type $\Sigma$ and the property
of being killed by $V.$ This is because under the canonical identification
\[
H_{cris}^{1}(\mathcal{A}_{x})_{\Spec(k)\hookrightarrow\Spec(k[\epsilon])}=H_{dR}^{1}(\mathcal{A}_{x})\otimes_{k}k[\epsilon]
\]
the map induced on the left hand side by functoriality from $\Ver:\mathcal{A}_{x}^{(p)}\to\mathcal{A}_{x}$
is $V_{cris}=V_{dR}\otimes1.$

Finally the condition $\varphi|_{P_{0}}=\psi\mod W$ means that the
infinitesimal deformation of $P_{0}$ stays in the Hodge filtration.

\medskip{}

Our problem is now reduced to linear algebra. Note first that
\[
\varphi\in\Hom_{\kappa}(W,H/W)^{sym}=\Hom(P,H(\Sigma)/P),
\]
where we have written $P=W(\Sigma),$ the symmetry condition with
respect to the pairing $\{,\}_{\phi}$ then determining uniquely the
component in $\Hom(Q,H(\overline{\Sigma})/Q),$ where $Q=W(\overline{\Sigma})$.
Likewise,
\[
\psi\in\Hom_{\kappa}(P_{0},H_{0}/P_{0})=\Hom(P_{0},H_{0}(\Sigma)/P_{0}).
\]

The dimension of $H_{0}(\Sigma)$ is $n.$ Indeed, $H_{0}=H[V]$ is
the image of the map $F:H^{(p)}\to H,$ whose kernel is $W^{(p)}$.
As $H$ itself is balanced (of type $(m+n,m+n)),$ and $H^{(p)}[F]=W^{(p)}$
is of type $(m,n)$, 
\[
H_{0}=H[V]\simeq H^{(p)}/W^{(p)}
\]
is of type $(n,m).$ Thus, $\psi$ varies in a space of dimension
$(n-m)m.$

Given $\psi,$ $\varphi|_{P_{0}}$ is determined, and by this we take
care of the constraint $\varphi|_{P_{0}}=\psi\mod W.$ It remains
to extend $\varphi$ from $P_{0}$ to $P.$ As the codimension of
$P_{0}$ in $P$ is $m$ and the dimension of $H(\Sigma)/P$ is $(n+m)-n=m,$
this adds $m^{2}$ dimensions to the tangent space. Altogether
\[
\dim S^{\sharp}(k[\epsilon])_{y}=(n-m)m+m^{2}=nm
\]
as desired.
\end{proof}

\subsection{\label{subsec:The-maps-from S_0(p)_m and S_0(p)_et}The maps from
$S_{0}(p)_{m}$ and $S_{0}(p)_{et}$ to $S^{\sharp}$}

We denote by $S_{0}(p)_{m}$ and $S_{0}(p)_{et}$ the Zariski closures
in $S_{0}(p)$ of $S_{0}(p)_{m}^{\ord}$ and $S_{0}(p)_{et}^{\ord}$.
Our purpose is to define \emph{finite flat} morphisms
\[
\pi_{m}^{\sharp}:S_{0}(p)_{m}\to S^{\sharp},\,\,\,\pi_{et}^{\sharp}:S_{0}(p)_{et}\to S^{\sharp}
\]
which extend the restrictions $\pi_{m}:S_{0}(p)_{m}^{\ord}\to S^{\ord}$
and $\pi_{et}:S_{0}(p)_{et}^{\ord}\to S^{\ord}$ of $\pi$ to the
ordinary-multiplicative and ordinary-\'etale loci. In fact, $\pi_{m}^{\sharp}$
will be an isomorphism, and our main interest will be in $\pi_{et}^{\sharp}.$ 

We stress that although the compositions of these maps with the projection
from $S^{\sharp}$ to $S$ both agree with $\pi$, as maps to $S^{\sharp}$
they do not agree on the intersection of $S_{0}(p)_{m}$ and $S_{0}(p)_{et}$,
except for the part lying over $S_{\sharp}.$

\subsubsection{The multiplicative component}
\begin{lem}
\label{lem:P_0}Let $R$ be a $\kappa$-algebra and $(\underline{A},H)\in S_{0}(p)_{m}(R)$.
Then: (i) $\Fr(H)=0$, (ii) $\omega_{H/R}(\Sigma)$ is locally free
of rank $m$, and (iii) the subsheaf 
\begin{equation}
\mathcal{P}_{0}:=\ker(\omega_{A/R}(\Sigma)\to\omega_{H/R}(\Sigma))\label{eq:P_0 on S_0(p)_m}
\end{equation}
agrees with $\mathcal{P}_{0}=\mathcal{P}[V]$ over $S^{\ord}$, is
locally free of rank $n-m$ and killed by $V=\Ver{}_{A/R}^{*}.$
\end{lem}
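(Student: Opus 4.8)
The plan is to reduce all three assertions to the ordinary locus, where $H$ is completely explicit, and then to propagate by density. It suffices to treat the universal pair $(\mathcal{A},\mathcal{H})$ over $S_0(p)_m$ and pull back along the given map $\Spec R\to S_0(p)_m$. I would use throughout two features of $S_0(p)_m$: it is reduced, indeed normal, by Proposition~\ref{prop:=00005BG=0000F6=00005D}; and $S_0(p)_m^{\ord}$ is Zariski dense in it by construction, with $\pi_m$ an isomorphism onto $S^{\ord}$ and $\sigma_m$ identifying $\mathcal{H}|_{S_0(p)_m^{\ord}}$ with $\mathcal{A}[p]^{mult}$, whose structure is read off from the $\mu$-ordinary $p$-divisible group (\ref{eq:mu_ord_p_div_gp}). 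The soft principles invoked are: (a) a closed condition holding on a dense open of a reduced scheme holds everywhere; (b) a coherent sheaf on a reduced scheme with locally constant fibre dimension is locally free, and a homomorphism between locally free sheaves on a reduced scheme that vanishes on a dense open vanishes identically.

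For (i) I would argue that $\Fr(\mathcal{H})=0$ --- equivalently $\mathcal{H}\subseteq\mathcal{A}[\Fr_{\mathcal{A}/S_0(p)_m}]$, equivalently the composite $\mathcal{H}\hookrightarrow\mathcal{A}\to\mathcal{A}^{(p)}$ factors through the zero section --- is a closed condition on the base, since $\mathcal{H}$ is finite locally free. It holds over $S_0(p)_m^{\ord}$: there $\mathcal{A}[p]^{mult}$ is of multiplicative type and killed by $p$, so its Cartier dual is \'etale and killed by $p$, hence killed by Verschiebung, and therefore $\mathcal{A}[p]^{mult}$ itself is killed by relative Frobenius. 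By principle (a) it then holds on all of $S_0(p)_m$, and (i) for arbitrary $R$ follows by pull-back.

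For (ii) and (iii) the key is that, by (i), every geometric fibre $H_x$ is killed by Frobenius, so $\dim_k\omega_{H_x}=\log_p|H_x|=2m$ is constant; by (b), $\omega_{\mathcal{H}/S_0(p)_m}$ is locally free of rank $2m$, its $\Sigma$-part is a direct summand, hence locally free, and the rank of that summand --- which one reads off on the dense open, where $\mathcal{A}[p]^{mult}$ is a form of $(\mathcal{O}_E\otimes\mu_p)^m$ --- is $m$. For (iii), the conormal sequence of $\mathcal{H}\hookrightarrow\mathcal{A}$ gives a surjection $\omega_{\mathcal{A}}\twoheadrightarrow\omega_{\mathcal{H}}$, hence $\mathcal{P}=\omega_{\mathcal{A}}(\Sigma)\twoheadrightarrow\omega_{\mathcal{H}}(\Sigma)$, whose kernel $\mathcal{P}_0$ from (\ref{eq:P_0 on S_0(p)_m}) is then locally free of rank $n-m$. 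To identify it with $\mathcal{P}[V]$ over $S^{\ord}$ I would use (\ref{eq:mu_ord_p_div_gp}): there $\mathcal{A}[p]^{mult}\subseteq\mathcal{A}[\Fr]$ with quotient the local-local part $\mathscr{G}_{\Sigma}^{n-m}[\Fr]$; since $\omega$ is exact on short exact sequences of Frobenius-killed group schemes and $\omega_{\mathcal{A}[\Fr]}=\omega_{\mathcal{A}}$, this yields $\mathcal{P}_0=\ker(\mathcal{P}\to\omega_{\mathcal{A}[p]^{mult}}(\Sigma))=\omega_{\mathscr{G}_{\Sigma}^{n-m}}$, which is $\mathcal{P}[V]$ because $\Ver$ is inseparable on the $\mathscr{G}_{\Sigma}$-part (so $\Ver^{*}=V_{\mathcal{P}}$ vanishes there) and an isomorphism on the multiplicative part. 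Thus $V_{\mathcal{P}}$ kills $\mathcal{P}_0$ on the dense open $S_0(p)_m^{\ord}$; being the vanishing of a map $\mathcal{P}_0\to\mathcal{Q}^{(p)}$ of locally free sheaves on the reduced scheme $S_0(p)_m$, it holds everywhere, and (iii) follows for arbitrary $R$ by pull-back.

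The only real content is leaving the ordinary locus, and that rests squarely on the reducedness in Proposition~\ref{prop:=00005BG=0000F6=00005D}: without it the closed and vanishing conditions above could fail off the dense open. The two group-scheme facts I invoke --- that a finite flat group scheme killed by Frobenius has locally free $\omega$ with $|G|=p^{\rk\omega_G}$, and that $G\mapsto\omega_G$ is exact on short exact sequences of such --- are standard (e.g.\ via the equivalence with pairs $(M,M^{(p)}\to M)$, $M$ locally free). I expect the explicit $\mu$-ordinary computation underlying ``$\mathcal{P}_0=\mathcal{P}[V]$'' and the bookkeeping with $\Sigma$-parts to be the most tedious, though entirely routine, part.
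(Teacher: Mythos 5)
Your proposal is correct, and its overall architecture -- reduce to the universal pair over $S_{0}(p)_{m}$, use reducedness from Proposition \ref{prop:=00005BG=0000F6=00005D}, verify everything on the dense ordinary locus where $H=A[p]^{mult}$ is explicit, and propagate by closedness of conditions -- is exactly the paper's. Parts (i) and (iii) match the paper's proof step for step (you supply more detail for the identification $\mathcal{P}_{0}=\mathcal{P}[V]$ over $S^{\ord}$ via the splitting $\mathcal{A}[\Fr]=\mathscr{G}_{\Sigma}^{n-m}[\Fr]\times\mathcal{A}[p]^{mult}$, which the paper leaves implicit). The one genuine divergence is in (ii): the paper works fiberwise with the covariant Dieudonn\'e module, noting that $\Fr(H)=0$ forces $\Lie(H_{x})=M(H_{x})$, and then invokes the Raynaud (balanced) condition to get $\dim M(H_{x})(\Sigma)=m$ at \emph{every} geometric point directly. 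You instead get only the total rank $\dim_{k}\omega_{H_{x}}=2m$ pointwise from the order formula for height-one group schemes, and then pin down the rank of the $\Sigma$-summand by local constancy of ranks of a direct summand of a locally free sheaf, transported from the dense open. Both are valid; the paper's route is more robust in that it needs no connectedness bookkeeping (the balanced condition does the work at each point separately), while yours trades the Dieudonn\'e-theoretic input for the topological fact that every connected component of $S_{0}(p)_{m}$ meets $S_{0}(p)_{m}^{\ord}$ -- which does hold here, since $S_{0}(p)_{m}$ is by definition the closure of that locus, so your argument goes through.
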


\begin{proof}
(i) This is a closed condition and it holds on $S_{0}(p)_{m}^{\ord}$,
so it holds by continuity on its Zariski closure $S_{0}(p).$ 

(ii) By reduction to the universal case we may assume, since $S_{0}(p)_{m}$
is reduced by Proposition \ref{prop:=00005BG=0000F6=00005D}, that
$R$ is reduced. It is therefore enough to prove that all the geometric
fibers of $\omega_{H/R}(\Sigma)$ are of the same dimension $m.$
We may therefore assume that $R=k$ is an algebraically closed field. 

Let $M=M(H)$ be the covariant Dieudonn\'e module of $H/k.$ Recall
that $\Lie(H)=M[V]$, where $V$ is the map $M(H)\to M(H^{(p)})$
induced by $\Fr.$ By (i) $\Lie(H)=M.$ But the Dieudonn\'e module is
$2m$-dimensional and balanced, so $M(\Sigma)$ is $m$-dimensional.
Hence, $\Lie(H)(\Sigma)$ and its dual $\omega_{H/k}(\Sigma)$ are
$m$-dimensional. 

(iii) Since the map $\omega_{A/R}\to\omega_{H/R}$ is surjective,
the assertion on the rank follows from (ii). The condition that $V$
kills $\mathcal{P}_{0}$ holds over $S_{0}(p)_{m}^{\ord}$ (where
$\mathcal{P}_{0}=\mathcal{P}[V]$), so being a closed condition, continues
to hold over $S_{0}(p)_{m}.$
\end{proof}
Define the map $\pi_{m}^{\sharp}:S_{0}(p)_{m}\to S^{\sharp}$ by
\[
\pi_{m}^{\sharp}(\underline{A},H)=(\underline{A},\mathcal{P}_{0}),
\]
where $\mathcal{P}_{0}$ is given by (\ref{eq:P_0 on S_0(p)_m}).
By the lemma, it is well defined, and it clearly extends the isomorphism
$\pi_{m}:S_{0}(p)_{m}^{\ord}\simeq S^{\ord}.$
\begin{prop}
\label{prop:pi_m^sharp}
The map $\pi_{m}^{\sharp}$ is an isomorphism $S_{0}(p)_{m}\simeq S^{\sharp}.$
\end{prop}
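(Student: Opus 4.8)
The plan is to construct an explicit inverse morphism $S^{\sharp} \to S_{0}(p)_{m}$ and check that the two composites are the identity. Given a $\kappa$-algebra $R$ and a point $(\underline{A}, \mathcal{P}_{0}) \in S^{\sharp}(R)$, I want to recover a multiplicative Raynaud subgroup $H \subset A[p]$. The natural candidate is $H := A[p]^{\mathrm{mult}}$, the maximal finite flat subgroup scheme of multiplicative type; over $S^{\ord}$ this is exactly the section $\sigma_m$ used before, and it is automatically isotropic, $\mathcal{O}_E$-stable and Raynaud. The point is that $(\underline{A}, A[p]^{\mathrm{mult}})$ lies in the Zariski closure $S_{0}(p)_{m}$ — not merely in $S_{0}(p)$ — and that the assignment is a morphism of schemes; for this one works with the universal object over $S^{\sharp}$, which is reduced (indeed smooth) by Theorem~\ref{thm:smoothness of S=000023}, so it suffices to exhibit the subgroup scheme fppf-locally and check it specializes correctly. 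The subtlety is that $A[p]^{\mathrm{mult}}$ need not be locally free of constant rank over all of $S^{\sharp}$ if the multiplicative rank jumps; so instead I would characterize $H$ via the Dieudonné/Hodge data: on $S_{0}(p)_{m}$ one has, by Lemma~\ref{lem:P_0}, that $\omega_{H/R}(\Sigma)$ is the rank-$m$ quotient $\omega_{A/R}(\Sigma)/\mathcal{P}_0$, and dually $H$ is cut out inside $A[\Fr]$ (using part (i) of that lemma, $\Fr(H)=0$) by the subbundle dual to $\mathcal{P}_0$. So the construction of the inverse is: send $(\underline{A},\mathcal{P}_0)$ to $\underline{A}$ together with the subgroup scheme $H \subset A[\Fr]$ whose Dieudonné module is the annihilator determined by $\mathcal{P}_0$ and its polarization-orthogonal in $\omega_A(\overline\Sigma)$.

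Concretely I would proceed as follows. First, over $S^{\sharp}$ the bundle $f^{*}\mathcal{P}$ carries the tautological $\mathcal{P}_0 \subset f^{*}\mathcal{P}[V]$; set $\mathcal{N} := f^{*}\mathcal{P}/\mathcal{P}_0$, a rank-$m$ quotient. Using the polarization pairing, which identifies $\omega_{A^t}(\Sigma)$ with $\mathcal{Q} = \omega_A(\overline\Sigma)$, one obtains a canonical rank-$m$ subbundle $\mathcal{M} \subset \omega_A(\overline\Sigma)$ (the orthogonal of $\mathcal{P}_0$ under the Hodge–de Rham pairing restricted appropriately). The pair $(\mathcal{N},\mathcal{M})$ defines a subspace of the Hodge filtration inside $H^1_{dR}(A)$ that is killed by $F$ (since $\mathcal{P}_0 \subset \mathcal{P}[V]$ means the dual condition on the $\Fr$-kernel holds), hence by Dieudonné theory for finite flat group schemes of height $\le 1$ — equivalently, since everything is $p$-torsion inside $A[\Fr]$ — determines a finite flat subgroup scheme $H \subset A[\Fr] \subset A[p]$, locally free of rank $p^{2m}$, with $\omega_{H}(\Sigma) = \mathcal{N}$ and $\omega_H(\overline\Sigma) = \mathcal{M}$. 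By construction $H$ is $\mathcal{O}_E$-stable and balanced (Raynaud), and isotropy follows because $\mathcal{M}$ was taken to be the polarization-orthogonal of $\mathcal{P}_0$, i.e. $H = H^{\perp} \cap A[\Fr]$ up to the rank count — this forces $H$ isotropic. Over $S^{\ord}$ this $H$ is of multiplicative type, so $(\underline{A}, H) \in S_0(p)^{\ord}_m$; since $S^{\sharp}$ is irreducible on each component (again Theorem~\ref{thm:smoothness of S=000023}) with $S^{\ord}$ dense, and $S_0(p)_m$ is closed in $S_0(p)$, the map $S^{\sharp} \to S_0(p)$ so defined factors through $S_0(p)_m$.

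Finally I would verify the two composites. Going $S_0(p)_m \xrightarrow{\pi_m^{\sharp}} S^{\sharp} \to S_0(p)_m$: starting from $(\underline{A},H)$, Lemma~\ref{lem:P_0} gives $\mathcal{P}_0 = \ker(\omega_A(\Sigma) \to \omega_H(\Sigma))$, so $\mathcal{N} = \omega_H(\Sigma)$, and the reconstructed subgroup has $\omega(\Sigma) = \omega_H(\Sigma)$; by duality and the fact that both $H$ and the reconstruction sit in $A[\Fr]$ with the same Hodge data, they coincide. (Here one uses that a finite flat subgroup scheme of $A[\Fr]$ is determined by its Lie algebra, via the Dieudonné correspondence over the base.) Conversely, starting from $(\underline{A},\mathcal{P}_0) \in S^{\sharp}(R)$, the reconstructed $H$ satisfies by construction $\omega_H(\Sigma) = f^{*}\mathcal{P}/\mathcal{P}_0$, so $\ker(\omega_A(\Sigma)\to\omega_H(\Sigma)) = \mathcal{P}_0$, recovering the original point. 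Both composites being the identity, $\pi_m^{\sharp}$ is an isomorphism.

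\medskip

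\noindent\textbf{Main obstacle.} The delicate point is the passage from the linear-algebra data $(\mathcal{P}_0, \mathcal{M})$ to an honest finite flat subgroup scheme $H \subset A[\Fr]$ over an arbitrary (possibly non-reduced) $\kappa$-algebra $R$, and the verification that $H$ is locally free of the correct rank $p^{2m}$ everywhere, not just generically — the multiplicative rank of $A[p]$ jumps on deeper strata, so one must argue via $A[\Fr]$ and the contravariant Dieudonné theory of Grothendieck–Messing, where $A[\Fr]$ corresponds to $H^1_{dR}(A)/VH^1_{dR}(A^{(p)})$ with its Hodge filtration, and subgroup schemes correspond to sub-Dieudonné-modules which are locally direct summands. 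Establishing that $\mathcal{P}_0 \subset \mathcal{P}[V]$ (a subbundle condition that is part of the moduli problem $S^{\sharp}$) translates exactly into "$\mathcal{N}$ is $F$-stable and a local summand" — hence cuts out a genuine subgroup scheme — is where the real content lies; the rest is bookkeeping with the polarization and a density/closedness argument to land in $S_0(p)_m$ rather than all of $S_0(p)$.
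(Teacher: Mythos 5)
Your route is genuinely different from the paper's: you try to build an explicit inverse $S^{\sharp}\to S_{0}(p)_{m}$ at the level of moduli problems over an arbitrary $\kappa$-algebra $R$, whereas the paper only checks that $\pi_{m}^{\sharp}$ is injective on geometric points (a pointwise Dieudonn\'e-module computation resting on Lemma \ref{lem:P_0}), observes that $\pi_{m}^{\sharp}$ is proper, quasi-finite and birational, and then invokes Zariski's Main Theorem together with the smoothness (hence normality) of the target $S^{\sharp}$ supplied by Theorem \ref{thm:smoothness of S=000023}. That argument needs no group-scheme theory over non-reduced bases at all.

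As written, your proof has a gap exactly where you flag the ``main obstacle'': the passage from the linear-algebra datum $\mathcal{P}_{0}$ to a finite flat subgroup scheme $H\subset A[\Fr]$ over a possibly non-reduced $R$ is asserted but not carried out. The correct tool is the equivalence between finite locally free group schemes killed by Frobenius and finite locally free restricted ($p$-)Lie algebras: you must exhibit $\Lie(H)$ as a local direct summand of $\Lie(A)$ that is closed under the $p$-operation. Concretely $\Lie(H)$ should be $\mathrm{Ann}(\mathcal{P}_{0})\oplus\Lie(A)(\overline{\Sigma})$, and $p$-closedness amounts, by the adjunction between $F$ on $\Lie$ and $V$ on $\omega$, precisely to $V(\mathcal{P}_{0})=0$, i.e.\ to the defining condition $\mathcal{P}_{0}\subset\mathcal{P}[V]$ of the moduli problem $S^{\sharp}$. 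You yourself say that this translation ``is where the real content lies'' but you do not supply it; without it neither the existence and flatness of $H$ nor the identification of the two composites (which rests on the same equivalence, namely that a subgroup scheme of $A[\Fr]$ is determined by its $p$-Lie subalgebra) is established. Your appeal to Grothendieck--Messing is also a red herring: what is needed is the height-one correspondence, not crystalline deformation theory. The paper's finite-plus-birational-onto-normal argument is the cheaper way to avoid all of this.
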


\begin{proof}
We first check that the map is 1-1 on $k$-points where $k$ is an
algebraically closed field. Let $(\underline{A},H)\in S_{0}(p)_{m}(k)$
and $\pi_{m}^{\sharp}(\underline{A},H)=(\underline{A},\mathcal{P}_{0}).$
The proof of Lemma \ref{lem:P_0} shows that $\Lie(H)(\Sigma)$ is
uniquely determined by $\mathcal{P}_{0}$ as the annihilator of $\mathcal{P}_{0}$
in $\Lie(A)(\Sigma).$ On the other hand $\Lie(H)(\overline{\Sigma})=\Lie(A)(\overline{\Sigma})$
since both are $m$-dimensional. We conclude that $\Lie(H)$ is uniquely
determined as a subspace of $\Lie(A)=M(A[p])[V].$ But the proof of
Lemma \ref{lem:P_0} also shows that $\Lie(H)=M(H)$, hence $M(H)\subset M(A[p])$
is uniquely determined, so $H\subset A[p]$ is uniquely determined
as a subgroup scheme.

Since $\pi_{m}^{\sharp}$ is clearly proper and quasi-finite, it is
finite. It is also birational. But $S^{\sharp}$ is smooth, so by
Zariski's Main Theorem $\pi_{m}^{\sharp}$ is an isomorphism.
\end{proof}
We let $\sigma_{m}^{\sharp}:S^{\sharp}\to S_{0}(p)_m \subset S_{0}(p)$ be the section
inverse to $\pi_{m}^{\sharp}.$

\subsubsection{The \'etale component}

We are now ready to extend the diagram which was constructed in \S\ref{subsec:Morphisms-between-ordinary-=0000E9tale}
from the ordinary locus to its Zariski closure.

\[\xymatrix{& S_0(p)_{et}^{(p)}\ar[d]^{\rho^\prime} \ar[dr]^{Fr_p} & \\ S_0(p)_{et}\ar[ru]^{Fr_p}\ar[r]^(0.6){\pi_{et}^{\sharp}}\ar[dr]_{\theta^\prime} & S^{\sharp}\ar@<-0.5ex>[d]_{\sigma_m^{\sharp}}\ar[r]^(0.4)\rho & S_0(p)_{et} \\ &  S_0(p)_{m} \ar@<-0.5ex>[u]_{\pi_m^{\sharp}}\ar[ur]_\theta }\]

\medskip{}
\begin{thm}
\label{thm:Extended theta and rho}(i) The maps $\rho,\rho',\theta$
and $\theta'$ can be extended, and a map $\pi_{et}^{\sharp}$ can
be defined, so that the diagram above is commutative. (ii) All the
morphisms in the diagram are finite and surjective. The maps $\pi_{et}^{\sharp}$,
$\theta'$ and $\rho'$ are also flat. 
\end{thm}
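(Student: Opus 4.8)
The plan is to extend each arrow by the moduli recipe that already defines it over $S^{\ord}$ in \S\ref{subsec:Morphisms-between-ordinary-=0000E9tale}, and to control the extension by two facts: all the schemes in the diagram are reduced (Proposition~\ref{prop:=00005BG=0000F6=00005D}, Theorem~\ref{thm:smoothness of S=000023}), and $S^{\ord}$ is dense in each of them. The point is that every ingredient of a moduli definition --- ``this subgroup scheme is finite flat of such-and-such rank, $\mathcal{O}_{E}$-stable, isotropic, Raynaud'', or ``this subsheaf of a locally free sheaf is a sub-bundle of such-and-such rank'' --- can be tested on geometric fibres or is a closed condition, so over a reduced base it is forced by its validity at the generic points, which we already know over $S^{\ord}$; and once the maps are in place, every identity in the diagram is an equality of morphisms from a reduced scheme into a separated scheme, hence holds globally as soon as it holds over the dense $S^{\ord}$. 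Finiteness, surjectivity and flatness then follow formally.

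\textbf{The maps $\theta,\theta',\rho,\pi_{et}^{\sharp}$.} Keep the formulas $\theta(\underline{A},H)=(\underline{A}^{(p^{2})},\Fr(\Ver^{-1}(H)))$ and $\theta'(\underline{A},H)=(\underline{A},\Ver(\Fr^{-1}(H^{(p^{2})})))$. On $S_{0}(p)_{m}$ we have $\Fr(H)=0$ (Lemma~\ref{lem:P_0}(i)), so $p\cdot\Ver^{-1}(H)=\Fr(\Ver(\Ver^{-1}(H)))\subset\Fr(H)=0$ and hence $\Ver^{-1}(H)\subset\mathcal{A}^{(p)}[p]$; moreover $\mathcal{A}^{(p)}[\Fr]\subset\Ver^{-1}(H)$, since this inclusion of finite flat closed subschemes of $\mathcal{A}^{(p)}[p]$ holds at the generic points of the reduced scheme $S_{0}(p)_{m}$ (where it is the already-known ordinary case), hence everywhere --- a subsheaf of a locally free sheaf over a reduced scheme vanishing at every generic point is zero. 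Therefore $\Fr(\Ver^{-1}(H))\cong\Ver^{-1}(H)/\mathcal{A}^{(p)}[\Fr]$ is finite flat of rank $p^{2m}$, and $\mathcal{O}_{E}$-stability, isotropy and the Raynaud condition are closed conditions valid over $S_{0}(p)_{m}^{\ord}$; since $\theta$ carries the dense $S_{0}(p)_{m}^{\ord}$ into $S_{0}(p)_{et}^{\ord}$ and $S_{0}(p)_{et}$ is closed in $S_{0}(p)$, $\theta$ has image in $S_{0}(p)_{et}$. The identical argument, now using the closed relation $H^{(p)}\subset\mathcal{A}^{(p)}[\Ver]$ on $S_{0}(p)_{et}$ --- true over the \'etale-ordinary locus because there $\Ver\circ\Fr=[p]$ kills $H$ --- produces $\theta'\colon S_{0}(p)_{et}\to S_{0}(p)_{m}$. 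One then sets $\rho:=\theta\circ\sigma_{m}^{\sharp}$ and $\pi_{et}^{\sharp}:=\pi_{m}^{\sharp}\circ\theta'$, where $\pi_{m}^{\sharp}$ is the isomorphism of Proposition~\ref{prop:pi_m^sharp} and $\sigma_{m}^{\sharp}$ its inverse; over $S^{\ord}$ these restrict to the maps $\rho$ and $\pi_{et}$ of \S\ref{subsec:Morphisms-between-ordinary-=0000E9tale}, and the two lower triangles of the diagram commute by construction.

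\textbf{The map $\rho'$: the main obstacle.} Extending $\rho'$ to $S_{0}(p)_{et}^{(p)}$ is the delicate step: the naive extension of~(\ref{eq:Definition of rho'}) exhibits the desired sub-bundle only as the kernel of the isogeny-induced map $\varphi^{*}\colon\omega_{\mathcal{A}}(\Sigma)\to\omega_{\mathcal{A}_{1}/\mathcal{H}_{1}}(\Sigma)$ from the proof of Theorem~\ref{thm:fol-mor corresp}, and it is not evident a priori that $\varphi^{*}$ has locally constant rank, i.e. that one obtains a genuine sub-bundle of rank $n-m$ (compare the footnote in \S\ref{subsec:The-maps-from S_0(p)_m and S_0(p)_et} on images of finite flat group schemes). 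I would bypass this using \emph{normality}: $S_{0}(p)_{et}$ is a union of irreducible components of $S_{0}(p)$ (being the closure of the equidimensional $S_{0}(p)_{et}^{\ord}$ inside the equidimensional $S_{0}(p)$), those components have normal local rings by Proposition~\ref{prop:=00005BG=0000F6=00005D}, and they are pairwise disjoint because $S_{0}(p)_{et}$ is relatively irreducible over the \emph{smooth} scheme $S$. Over $S^{\ord}$ the identity $\pi_{et}=\rho'\circ Fr_{p}$ of \S\ref{subsec:Morphisms-between-ordinary-=0000E9tale} shows that $\pi_{et}^{\sharp}$ pulls local functions on $S^{\sharp}$ back to $p$-th powers over $S_{0}(p)_{et}^{\ord}$; since a $p$-th root of a function that is regular on all of the normal scheme $S_{0}(p)_{et}$ is itself regular there, $\pi_{et}^{\sharp}$ in fact factors through the relative Frobenius $Fr_{p}\colon S_{0}(p)_{et}\to S_{0}(p)_{et}^{(p)}$. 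The resulting morphism is $\rho'$, and $\rho'\circ Fr_{p}=\pi_{et}^{\sharp}$ by construction. Restricting the remaining identities of \S\ref{subsec:Morphisms-between-ordinary-=0000E9tale} to $S^{\ord}$ and invoking density (the source $S_{0}(p)_{et}^{(p)}$ is reduced) gives the commutativity of the two upper triangles, in particular $\rho\circ\rho'=Fr_{p}$.

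\textbf{Finiteness, surjectivity and flatness.} Each composite of two adjacent arrows in the diagram is a power of $Fr_{p}$ --- checked over $S^{\ord}$ and then on all of the reduced source ---, e.g. $\rho\circ\pi_{et}^{\sharp}=Fr_{p}^{2}$, $\theta\circ\theta'=\theta'\circ\theta=Fr_{p}^{2}$ and $\rho\circ\rho'=Fr_{p}$. Since $Fr_{p}$ is finite and bijective, every arrow $g$ is quasi-finite (its fibres inject into singleton fibres of some $Fr_{p}^{k}$) and proper (from $g'\circ g=Fr_{p}^{k}$ proper and $g'$ separated, by cancellation), hence finite; a finite morphism whose image is dense --- it contains the dense ordinary locus of the target --- is surjective. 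Finally, for $\pi_{et}^{\sharp}$, $\theta'$ and $\rho'$ the source is Cohen--Macaulay and equidimensional of dimension $nm$ (a disjoint union of the normal, hence Cohen--Macaulay, components of $S_{0}(p)_{et}$, or of its Frobenius twist $S_{0}(p)_{et}^{(p)}$), the target --- $S^{\sharp}$, resp. $S_{0}(p)_{m}\cong S^{\sharp}$ --- is regular and equidimensional of dimension $nm$ (Theorem~\ref{thm:smoothness of S=000023}, Proposition~\ref{prop:pi_m^sharp}), and the morphism is quasi-finite, so ``miracle flatness'' shows that these three maps are flat.
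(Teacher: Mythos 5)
Your proposal is correct and follows essentially the same route as the paper's proof: the same moduli formulas for $\theta$ and $\theta'$, extended to the Zariski closures by closedness of the relevant containments of finite flat subgroup schemes over a reduced base, then $\rho=\theta\circ\sigma_{m}^{\sharp}$, $\pi_{et}^{\sharp}=\pi_{m}^{\sharp}\circ\theta'$, commutativity by density, and miracle flatness for the three flat maps (the only imprecision being that the containment you cite for $\theta'$, namely $H^{(p)}\subset\mathcal{A}^{(p)}[\Ver]$, is trivially true and not the one needed --- the needed closed conditions are $\Fr^{-1}(H^{(p^{2})})\subset\mathcal{A}^{(p)}[p]$ and $\mathcal{A}^{(p)}[\Ver]\subset\Fr^{-1}(H^{(p^{2})})$, to which your template applies verbatim). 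The one step you package differently is $\rho'$: the paper extends it via a lemma on rational maps $\rho'$ satisfying $\rho\circ\rho'=Fr_{p}$ into a finite cover of a normal variety, whereas you extract it from $\rho'\circ Fr_{p}=\pi_{et}^{\sharp}$ by taking $p$-th roots of regular functions on the normal scheme $S_{0}(p)_{et}$ --- this is the same integrality argument resting on the same input from Proposition \ref{prop:=00005BG=0000F6=00005D}, so the two proofs coincide in substance.
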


This theorem can be strengthened, as we shall see in Theorem \ref{thm:smoothness_S_0(p)_et}
below: The maps $\rho$ and $\theta$ are also flat, and $S_{0}(p)_{et}$
is non-singular. However, this will result from considerations involving
the extension of the foliation $\mathcal{T}S^{+}$ to $S^{\sharp}$
and not merely from the constructions outlined here.

In the proof we shall use repeatedly the fact that if $H_{1}$ and
$H_{2}$ are two finite flat subgroup schemes of a finite flat group
scheme $G$ over a separated base, then the locus in the base where
$H_{1}\subset H_{2}$ is closed.
\begin{proof}
We first extend our map $\theta$ (\emph{cf.} \S\ref{subsec:Morphisms-between-ordinary-=0000E9tale})
from the ordinary locus to a morphism
\[
\theta:S_{0}(p)_{m}\to S_{0}(p)_{et},\,\,\,\,\theta(\underline{A},H)=(\underline{A}^{(p^{2})},\Fr(\Ver^{-1}(H))).
\]
We must show that this is well-defined. We have proved above that
for $(\underline{A},H)\in S_{0}(p)_{m}(R),$ $\Fr(H)=0,$ which means
$p\Ver^{-1}(H)=0,$ or $\Ver^{-1}(H)\subset A^{(p)}[p].$ Thus $\Ver^{-1}(H)$
is a finite flat subgroup scheme of rank $p^{n+3m}$ of $A^{(p)}[p].$

We claim that $A^{(p)}[\Fr]\subset\Ver^{-1}(H).$ This holds, as we
have seen before, over $S_{0}(p)_{m}^{\ord},$ so by the remark preceding
the proof, it persists over the Zariski closure $S_{0}(p)_{m}$.

We may now conclude that $J=\Fr(\Ver^{-1}(H))$ is a finite flat subgroup
scheme of rank $p^{2m}$ of $A^{(p^{2})}[p].$ That it is isotropic
follows from the fact that $\Ver^{-1}(H)\subset A^{(p)}[p],$ because
for $u,v\in\Ver^{-1}(H)$
\[
\left\langle \Fr(u),\Fr(v)\right\rangle _{A^{(p^{2})}}=\left\langle u,\Ver\circ\Fr(v)\right\rangle _{A^{(p)}}=\left\langle u,pv\right\rangle _{A^{(p)}}=0.
\]
Clearly $J$ is $\mathcal{O}_{E}$-stable. To check that it is Raynaud
we may assume, as usual, that $R=k$ is an algebraically closed field.
The exact sequences of covariant Dieudonn\'e modules
\[
0\to M(A^{(p)}[\Ver])\to M(\Ver^{-1}(H))\to M(H)\to0
\]
and
\[
0\to M(A^{(p)}[\Fr])\to M(\Ver^{-1}(H))\to M(J)\to0
\]
show, since $M(H)$ is balanced and $M(A^{(p)}[\Ver])$ and $M(A^{(p)}[\Fr])$
have the same signature, that $J$ is Raynaud. To see this last point,
from the exact sequence
\[
0\to M(A[\Fr])\to M(A[p])\to M(A^{(p)}[\Ver])\to0
\]
and the fact that $M(A[p])$ is balanced, we get that the types of
$M(A[\Fr])$ and $M(A^{(p)}[\Ver])$ are opposite, hence the types
of $M(A^{(p)}[\Fr])$ and $M(A^{(p)}[\Ver])$ are the same.

We conclude that $\theta$ is well-defined and maps $S_{0}(p)_{m}$
into $S_{0}(p).$ Since it maps $S_{0}(p)_{m}^{\ord}$ into $S_{0}(p)_{et}^{\ord},$
it actually maps $S_{0}(p)_{m}$ into $S_{0}(p)_{et}.$

As before, we define
\[
\rho:S^{\sharp}\to S_{0}(p)_{et},\,\,\,\,\rho=\theta\circ\sigma_{m}^{\sharp}.
\]

We shall next define a similar extension of $\theta'$ (\emph{cf.}
\S\ref{subsec:Morphisms-between-ordinary-=0000E9tale}) to a map 
\[
\theta':S_{0}(p)_{et}\to S_{0}(p)_{m},\,\,\,\theta'(\underline{A},H)=(\underline{A},\Ver(\Fr^{-1}(H^{(p^{2})}))),
\]
and let
\[
\pi_{et}^{\sharp}=\pi_{m}^{\sharp}\circ\theta':S_{0}(p)_{et}\to S^{\sharp}.
\]
Let $(\underline{A},H)\in S_{0}(p)_{et}(R).$ Consider $H'=\Ver(\Fr^{-1}(H^{(p^{2})}))\subset A.$
We claim that $H'$ is a finite flat subgroup scheme of $A[p]$ of
rank $p^{2m}.$ To see it, note first that $\Fr^{-1}(H^{(p^{2})})$
is finite flat of rank $p^{n+3m},$ being the kernel of the isogeny
\[
\psi:A^{(p)}\overset{\Fr}{\to}A^{(p^{2})}\to A^{(p^{2})}/H^{(p^{2})}.
\]
Second, note that $\Fr^{-1}(H^{(p^{2})})$ is contained in $A^{(p)}[p].$
Indeed, this holds over $S_{0}(p)_{et}^{\ord},$ so it holds by continuity
over the whole of $S_{0}(p)_{et}.$ Third, we claim that
\[
A^{(p)}[\Ver]\subset\Fr^{-1}(H^{(p^{2})}).
\]
This too follows by continuity, since it clearly holds over $S_{0}(p)_{et}^{\ord}.$
We conclude that $H'$ is finite flat of rank $p^{2m}.$ Moreover
$\Fr(H')=0$, since $\Fr\circ\Ver=p\cdot id_{A^{(p)}}$ and $\Fr^{-1}(H^{(p^{2})})\subset A^{(p)}[p].$
One checks now, as before, that $H'$ is isotropic, $\mathcal{O}_{E}$-stable
and Raynaud. Setting $\theta'(\underline{A},H)=(\underline{A},H')$
defines a map from $S_{0}(p)_{et}$ to $S_{0}(p)$. As it maps $S_{0}(p)_{et}^{\ord}$
to $S_{0}(p)_{m}^{\ord}$ its image is in $S_{0}(p)_{m}$ and $\theta'$
extends the morphism between the ordinary parts constructed in \S\ref{subsec:Morphisms-between-ordinary-=0000E9tale}.

This concludes the definition of the maps in the lower triangles.
It is easily checked that
\[
\theta\circ\theta'=\rho\circ\pi_{et}^{\sharp}=Fr_{p}^{2}.
\]
As $S^{\sharp}$ is non-singular, $S_{0}(p)_{et}$ is Cohen-Macaulay
and $\pi_{et}^{\sharp}$ is finite and onto, we deduce from \cite{=00005BEis=00005D}
18.17 that $\pi_{et}^{\sharp}$ is flat. Hence $\theta'=\sigma_{m}^{\sharp}\circ\pi_{et}^{\sharp}$
is also flat.

It remains to define $\rho'$. This has been done over the ordinary
locus, via the modular interpretation, in \S\ref{subsec:Morphisms-between-ordinary-=0000E9tale}.
Extending the definition of $\rho'$ via the modular interpretation
to $S_{0}(p)_{et}^{(p)}$ is possible, but painful. Instead, we conclude
the proof of the theorem with the help of the following general lemma.
It follows from it that $\rho'$ extends to the whole of $S_{0}(p)_{et}^{(p)}$.
The commutativity of the diagram follows by continuity from the fact
that it is commutative over the ordinary locus. The fact that $\rho'$
is flat follows again from \cite{=00005BEis=00005D} 18.17 since $S_{0}(p)_{et}^{(p)}$
is Cohen-Macaulay, $S^{\sharp}$ is non-singular and $\rho'$ is finite
and onto. Surjectivity of the maps follows from the fact that they
are finite and dominant.
\end{proof}
\begin{lem}
Let $X$ and $Y$ be irreducible varieties over a perfect field $\kappa$
of characteristic $p,$ with $Y$ normal. Suppose that we are given
a finite morphism $\rho$ and a rational map $\rho'$
\[
Y\overset{\rho'}{\dashrightarrow}X\overset{\rho}{\to}Y^{(p)}
\]
such that $\rho\circ\rho'=Fr_{p}.$ Then $\rho'$ extends to a morphism
on the whole of Y.
\end{lem}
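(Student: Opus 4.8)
The plan is to present $\rho'$ as a rational \emph{section} of a finite morphism over $Y$, and then to use the normality of $Y$ to conclude that such a section is automatically a morphism. Since $\rho\circ\rho'=Fr_{p}$ is dominant, $\rho'$ is dominant; fix a dense open $U\subseteq Y$ on which $\rho'$ is defined. Form the fibre product
\[
\widetilde{X}=X\times_{Y^{(p)},\,Fr_{p}}Y,
\]
with projections $q_{1}\colon\widetilde{X}\to X$ and $q_{2}\colon\widetilde{X}\to Y$; as $\rho$ is finite, its base change $q_{2}$ is finite. The morphisms $\rho'|_{U}\colon U\to X$ and $\mathrm{incl}_{U}\colon U\to Y$ become equal after composition with $\rho$ and $Fr_{p}$ respectively (this is exactly the relation $\rho\circ\rho'=Fr_{p}$ restricted to $U$), so they define a morphism $s\colon U\to\widetilde{X}$ with $q_{2}\circ s=\mathrm{incl}_{U}$ and $q_{1}\circ s=\rho'|_{U}$. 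Thus $s$ is a rational section of $q_{2}$ that recovers $\rho'$ through $q_{1}$.

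Next I would let $\Gamma\subseteq\widetilde{X}$ be the Zariski closure of $s(U)$, with its reduced subscheme structure; being the closure of the image of the irreducible scheme $U$, it is integral. The restriction $q_{2}|_{\Gamma}\colon\Gamma\to Y$ is finite (a closed subscheme of the $Y$-finite scheme $\widetilde{X}$) and surjective (its image is closed, since $q_{2}|_{\Gamma}$ is proper, and contains the dense open $U$). The crucial point is that it is \emph{birational}. Indeed, $q_{2}$ is separated, so the section $s$ of the base change $q_{2}^{-1}(U)\to U$ is a closed immersion; hence $s(U)$ is closed in $q_{2}^{-1}(U)$ and therefore equals $\Gamma\cap q_{2}^{-1}(U)$, while $q_{2}|_{s(U)}$ is the inverse of the isomorphism $s\colon U\overset{\sim}{\to}s(U)$. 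A finite birational morphism onto a normal integral scheme is an isomorphism: the inclusions $\mathcal{O}_{Y}\subseteq(q_{2}|_{\Gamma})_{*}\mathcal{O}_{\Gamma}\subseteq\kappa(Y)$ exhibit $(q_{2}|_{\Gamma})_{*}\mathcal{O}_{\Gamma}$ as an integral extension of the integrally closed ring $\mathcal{O}_{Y}$ inside its fraction field, so the first inclusion is an equality. Hence $q_{2}|_{\Gamma}\colon\Gamma\overset{\sim}{\to}Y$, and $\rho':=q_{1}\circ(q_{2}|_{\Gamma})^{-1}\colon Y\to X$ is a morphism whose restriction to $U$ is $q_{1}\circ s=\rho'$, i.e. it extends the given rational map. (Equivalently, this last step is Zariski's Main Theorem applied to $q_{2}|_{\Gamma}$.)

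I expect the only subtle point to be the birationality of $q_{2}|_{\Gamma}$: it hinges on the fact that a section of a separated morphism is a closed immersion, which is what guarantees that $q_{2}|_{\Gamma}$ is generically an \emph{isomorphism} rather than merely generically finite. Everything else is formal --- the finiteness of $\rho$ makes $\widetilde{X}$, and hence $\Gamma$, finite and therefore proper over $Y$, so no properness assumption on $X$ itself is needed, and the normality of $Y$ enters precisely, and only, in upgrading ``finite and birational'' to ``isomorphism''.
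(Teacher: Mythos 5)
Your proof is correct. It takes a more geometric route than the paper's: you form the fibre product $X\times_{Y^{(p)},Fr_p}Y$, interpret $\rho'$ as a rational section $s$ of the finite morphism $q_2$, take the closure $\Gamma$ of $s(U)$, and invoke ``finite $+$ birational onto a normal integral scheme $\Rightarrow$ isomorphism'' (your birationality step, via the fact that a section of a separated morphism is a closed immersion, is the one genuinely non-formal point and you handle it correctly). The paper instead argues directly on coordinate rings: after reducing to $X=\Spec(A)$, $Y=\Spec(B)$, the finiteness of $\rho$ makes $A$ integral over $B^{(p)}\subset B$, the rational map $\rho'$ places $A$ inside $L=\mathrm{Frac}(B)$, and normality ($B$ integrally closed in $L$) forces $A\subset B$, i.e.\ a morphism $Y\to X$. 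At bottom the two arguments use normality in exactly the same way --- your final step $\mathcal{O}_Y\subseteq(q_2|_\Gamma)_*\mathcal{O}_\Gamma\subseteq\kappa(Y)$ is the paper's $B\subseteq A'\subseteq L$ computation transported to the graph closure --- but the packaging differs. What your version buys: it is intrinsically global, so it sidesteps the (routine but unstated) reduction to affines and the gluing of local extensions, and it makes transparent that no properness of $X$ is needed, only finiteness of $\rho$. What the paper's version buys: it is three lines of commutative algebra and produces the extension as an explicit ring containment $A\subset B$ rather than through an auxiliary fibre product.
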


\begin{proof}
We may assume that $X$ and $Y$ are affine. Let $X=\Spec(A)$ and
$Y=\Spec(B)$. Let $K$ and $L$ be the fields of fractions of $A$
and $B$, respectively. Let $B^{(p)}\subset L^{(p)}$ be the image
of $B$ and $L$ under Frobenius. Then $B^{(p)}\subset A$ and $L^{(p)}\subset K\subset L.$
Since $A$ is integral over $B^{(p)}$ and $B$ is integrally closed
in $L,$ we get $A\subset B,$ which proves the lemma, and concludes
the proof of Theorem \ref{thm:Extended theta and rho}. 
\end{proof}

\subsection{The extended foliation}

Via the map $f:S^{\sharp}\to S$ we can pull back the universal abelian
scheme $\mathcal{A}$ and its de Rham cohomology, and get a locally
free sheaf $f^{*}H_{dR}^{1}(\mathcal{A}/S)$ endowed with its own
Gauss-Manin connection
\[
\nabla:f^{*}H_{dR}^{1}(\mathcal{A}/S)\to f^{*}H_{dR}^{1}(\mathcal{A}/S)\otimes\Omega_{S^{\sharp}}^{1}.
\]
Over $S^{\sharp}$ we find the tautological vector bundle
\[
\mathcal{P}_{0}\subset f^{*}\mathcal{P}\subset f^{*}\omega_{\mathcal{A}/S}\subset f^{*}H_{dR}^{1}(\mathcal{A}/S).
\]
Let $y\in S^{\sharp}$. In the proof of Theorem \ref{thm:smoothness of S=000023}
we identified each tangent vector $\eta\in\mathcal{T}S_{y}^{\sharp}$
with a pair $\eta=(\varphi,\psi)$ such that, in the notation introduced
there,
\[
\varphi\in\Hom_{\kappa}(W,H/W)^{sym},\,\,\psi\in\Hom_{\kappa}(P_{0},H_{0}/P_{0}),\,\,\varphi|_{P_{0}}=\psi\mod W.
\]
We define the subsheaf $\mathcal{T}S^{\sharp+}\subset\mathcal{T}S^{\sharp}$
by the condition
\[
\eta\in\mathcal{T}S_{y}^{\sharp+}\Leftrightarrow\psi=0.
\]
\begin{prop}
\label{prop:extension_of_foliation}
The subsheaf $\mathcal{T}S^{\sharp+}$ is a height 1 foliation of
rank $m^{2}$ which extends $\mathcal{T}S^{+}$. It is transversal
to the fibers of $f:S^{\sharp}\to S.$
\end{prop}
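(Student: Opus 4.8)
The plan is to transport the argument of Proposition~\ref{prop:Being a foliation} along $f$, the one new ingredient being a description of the condition ``$\psi=0$'' in terms of the pulled-back Gauss--Manin connection. First I would establish the analogue of Corollary~\ref{cor: characterization of TS+}: for $y\in S^{\sharp}$ and $\eta\in\mathcal{T}S^{\sharp}_{y}$ written as $\eta=(\varphi,\psi)$ in the notation of the proof of Theorem~\ref{thm:smoothness of S=000023}, one has $\psi=\overline{\nabla}_{\eta}|_{\mathcal{P}_{0}}$, the reduction modulo $\mathcal{P}_{0}$ of $\nabla_{\eta}$ restricted to the tautological bundle $\mathcal{P}_{0}\subset f^{*}H_{dR}^{1}(\mathcal{A}/S)$; hence
\[
\mathcal{T}S^{\sharp+}=\{\eta\in\mathcal{T}S^{\sharp}\mid\nabla_{\eta}(\mathcal{P}_{0})\subseteq\mathcal{P}_{0}\}.
\]
Here one uses that in Grothendieck--Messing deformation theory $V_{\mathrm{cris}}=V_{dR}\otimes 1$ (as recorded in the proof of Theorem~\ref{thm:smoothness of S=000023}), so the sub-crystal $H_{dR}^{1}(\mathcal{A}/S)[V]=\mathrm{Im}(F)$ deforms trivially and the infinitesimal deformations of $\mathcal{P}_{0}$ inside it are governed by $\overline{\nabla}$ exactly as the Hodge filtration is governed by Kodaira--Spencer; since $\nabla$ preserves the sub-bundle $f^{*}(H_{dR}^{1}(\mathcal{A}/S)[V])$ and its $\Sigma$-part, and $\mathcal{P}_{0}$ lies inside it, the two formulations of ``$\psi=0$'' coincide.

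Granting this characterization, the sub-bundle statement follows formally. On all of $S$ the sheaf $H_{dR}^{1}(\mathcal{A}/S)[V]=\mathrm{Im}(F)$ is a sub-bundle (\S\ref{subsec:Frobenius,-Verschiebung-and}), so $\eta\mapsto\overline{\nabla}_{\eta}|_{\mathcal{P}_{0}}$ is an $\mathcal{O}_{S^{\sharp}}$-linear map of vector bundles
\[
\beta:\mathcal{T}S^{\sharp}\longrightarrow\mathcal{H}om\bigl(\mathcal{P}_{0},\,f^{*}(H_{dR}^{1}(\mathcal{A}/S)[V])(\Sigma)/\mathcal{P}_{0}\bigr)
\]
with $\ker\beta=\mathcal{T}S^{\sharp+}$. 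The target has rank $(n-m)m$ (the bundle $f^{*}(H_{dR}^{1}(\mathcal{A}/S)[V])(\Sigma)$ has rank $n$, $\mathcal{P}_{0}$ has rank $n-m$), and the proof of Theorem~\ref{thm:smoothness of S=000023} shows $\beta$ is surjective on every fibre (the parameter $\psi$ ranges freely there, an extension of $\varphi$ always existing). Thus $\beta$ is a surjection of vector bundles over the smooth scheme $S^{\sharp}$, so $\mathcal{T}S^{\sharp+}=\ker\beta$ is a sub-bundle of rank $nm-(n-m)m=m^{2}$. To see that it extends $\mathcal{T}S^{+}$, note that over $S^{\ord}$ the map $f$ is an isomorphism, $\mathcal{P}_{0}$ is the bundle $\mathcal{P}[V]$, and the displayed characterization is then Corollary~\ref{cor: characterization of TS+}.

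The two closure conditions would then be established exactly as in the proof of Proposition~\ref{prop:Being a foliation}, with $\nabla$ replaced by its pull-back. For involutivity: the Gauss--Manin connection is flat and flatness is preserved by pull-back, so $\nabla_{[\eta_{1},\eta_{2}]}=\nabla_{\eta_{1}}\nabla_{\eta_{2}}-\nabla_{\eta_{2}}\nabla_{\eta_{1}}$, and this carries $\mathcal{P}_{0}$ into $\mathcal{P}_{0}$ whenever $\eta_{1},\eta_{2}$ do. For closure under $\eta\mapsto\eta^{(p)}$: the $p$-curvature $\psi(\eta)=\nabla_{\eta^{(p)}}-\nabla_{\eta}^{(p)}$ of the pull-back connection is the pull-back of the $p$-curvature on $S$, which vanishes on $H_{dR}^{1}(\mathcal{A}/S)[V]$ by the easy direction of Cartier's theorem, as in the proof of Proposition~\ref{prop:Being a foliation}; hence it vanishes on $f^{*}(H_{dR}^{1}(\mathcal{A}/S)[V])\supseteq\mathcal{P}_{0}$, and $\nabla_{\eta^{(p)}}(\mathcal{P}_{0})=\nabla_{\eta}^{(p)}(\mathcal{P}_{0})\subseteq\mathcal{P}_{0}$. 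Finally, transversality to the fibres of $f$ is read off the parametrization: the tangent space at $y$ to the fibre $S^{\sharp}_{f(y)}$ is the set of pairs $(\varphi,\psi)$ with $\varphi=0$, so an element of $\mathcal{T}S^{\sharp+}_{y}$ tangent to the fibre has $\psi=0$ and $\varphi=0$, hence vanishes; equivalently $df$ embeds $\mathcal{T}S^{\sharp+}$ into $f^{*}\mathcal{T}S$.

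The main obstacle is the first step, the identification $\psi=\overline{\nabla}_{\eta}|_{\mathcal{P}_{0}}$: this is where one genuinely uses that $S^{\sharp}$ was built so that the sub-crystal $H_{dR}^{1}(\mathcal{A}/S)[V]$, together with its connection, controls the infinitesimal variation of $\mathcal{P}_{0}$, and it requires care with the crystalline deformation theory used in the proof of Theorem~\ref{thm:smoothness of S=000023}. Once this translation is in hand, the rest is a mechanical transport of Proposition~\ref{prop:Being a foliation} and Corollary~\ref{cor: characterization of TS+} along $f$.
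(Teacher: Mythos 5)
Your argument is correct, and most of it (the identification of the fibre of $\mathcal{T}S^{\sharp+}$ via the $(\varphi,\psi)$ parametrization, the rank count giving the sub-bundle property, the restriction to $S^{\ord}$, and the transversality) coincides with the paper's proof, which likewise invokes the dimension computation from Theorem~\ref{thm:smoothness of S=000023} and the reducedness of $S^{\sharp}$ to get a rank-$m^{2}$ sub-bundle. Where you genuinely diverge is in establishing the $p$-Lie algebra property: the paper disposes of involutivity and $p$-closure in one line by continuity, observing that these are closed conditions on a sub-bundle of $\mathcal{T}S^{\sharp}$ which hold over the dense open subset $S^{\ord}$ by Proposition~\ref{prop:Being a foliation}. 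You instead first extend Corollary~\ref{cor: characterization of TS+} to all of $S^{\sharp}$, identifying $\psi$ with the second fundamental form $\overline{\nabla}_{\eta}|_{\mathcal{P}_{0}}$ of the tautological bundle inside the $\nabla$-stable sub-bundle $f^{*}(H_{dR}^{1}(\mathcal{A}/S)[V])$, and then rerun the flatness and $p$-curvature argument verbatim. Your crystalline identification is sound (it is exactly the mechanism the paper itself invokes, but only over $S_{\sharp}$, in the proof of Theorem~\ref{thm:Sfol_integral_submanifold}: the point being that $V_{\mathrm{cris}}=V_{dR}\otimes 1$ makes $H_{dR}^{1}[V]$ horizontal over first-order neighborhoods, so deformations of $\mathcal{P}_{0}$ inside it are measured by $\overline{\nabla}$). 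The cost of your route is this extra deformation-theoretic step; the payoff is an intrinsic Gauss--Manin description of $\mathcal{T}S^{\sharp+}$ at every point of $S^{\sharp}$, including over the deep EO strata where $f$ is not an isomorphism, rather than only a characterization by continuity from the ordinary locus.
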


\begin{proof}
In the proof of Theorem \ref{thm:smoothness of S=000023} we found
that $\dim\mathcal{T}S_{y}^{\sharp+}=m^{2}.$ As the base $S^{\sharp}$
is reduced and the dimensions of its fibers are constant, $\mathcal{T}S^{\sharp+}$
is a sub-bundle of rank $m^{2}$. The tangent space to the fiber of
$f$ through $y$ is the set of pairs $(\varphi,\psi)$ with $\varphi=0.$
Thus $\mathcal{T}S^{\sharp+}$ is transversal to it. 

If $y\in S^{\ord}$, Corollary \ref{cor: characterization of TS+}
shows that $\psi=0$ is equivalent to $\eta\in\mathcal{T}S^{+}.$
Finally, the fact that $\mathcal{T}S^{\sharp+}$ is a $p$-Lie subalgebra
follows by continuity from the fact that $\mathcal{T}S^{+}$ is closed
under Lie bracket and raising to power $p,$ since $S^{\ord}$ is
dense in $S^{\sharp}.$
\end{proof}
We can now state the main theorem of this section.
\begin{thm}
\label{thm:smoothness_S_0(p)_et}The variety $S_{0}(p)_{et}$ is non-singular,
the morphism $\rho$ is finite and flat, and identifies $S_{0}(p)_{et}$
with the quotient of $S^{\sharp}$ by the foliation $\mathcal{T}S^{\sharp+}.$
\end{thm}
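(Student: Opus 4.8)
The plan is to identify $S_{0}(p)_{et}$, via $\rho$, with the Ekedahl quotient of $S^{\sharp}$ by the foliation $\mathcal{T}S^{\sharp+}$, and then to read off non-singularity and flatness from the general theory recalled in \S\ref{subsec:Foliations-and-inseparable}. Since $S^{\sharp}$ is non-singular by Theorem \ref{thm:smoothness of S=000023} and $\mathcal{T}S^{\sharp+}$ is a height $1$ foliation of rank $m^{2}$ by Proposition \ref{prop:extension_of_foliation}, Proposition \ref{prop:=00005BEk=00005D,-Proposition-2.4.} yields a finite flat height $1$ morphism $q\colon S^{\sharp}\to Z:=S^{\sharp}/\mathcal{T}S^{\sharp+}$ of degree $p^{m^{2}}$; moreover $Z$ is non-singular, since regularity descends along the faithfully flat morphism $q$.

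First I would show that $\rho$ factors through $q$. For any local section $g$ of $\mathcal{O}_{S_{0}(p)_{et}}$ and any local derivation $\xi$ in $\mathcal{T}S^{\sharp+}$, the function $\xi(\rho^{\#}g)$ vanishes on $S^{\ord}$: indeed over $S^{\ord}$ the map $\rho$ is precisely the quotient by $\mathcal{T}S^{+}$ by Theorem \ref{thm:fol-mor corresp}, so $\rho^{\#}g$ is a section of $\mathcal{O}_{S^{\ord}}^{\mathcal{T}S^{+}=0}$ there. Since $S^{\ord}$ is dense in $S^{\sharp}$ and $S^{\sharp}$ is reduced, $\xi(\rho^{\#}g)=0$ on all of $S^{\sharp}$; that is, $\rho^{\#}$ carries $\mathcal{O}_{S_{0}(p)_{et}}$ into $\mathcal{O}_{S^{\sharp}}^{\mathcal{T}S^{\sharp+}=0}$. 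By the local description $\mathcal{O}_{Z}=\mathcal{O}_{S^{\sharp}}^{\mathcal{T}S^{\sharp+}=0}$ of the quotient (Proposition \ref{prop:=00005BEk=00005D,-Proposition-2.4.}), $\rho$ factors uniquely as $\rho=\bar{\rho}\circ q$ for a morphism $\bar{\rho}\colon Z\to S_{0}(p)_{et}$.

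It then remains to prove that $\bar{\rho}$ is an isomorphism. It is quasi-finite (the fibres of $\rho$ are finite and $q$ is surjective) and, since $\rho$ is finite hence proper and $q$ is surjective, it is also proper, hence finite. Its degree is $\deg\rho/\deg q$; but $\rho$ is dominant of degree $p^{m^{2}}$ --- this follows from its restriction $\rho|_{S^{\ord}}\colon S^{\ord}\to S_{0}(p)_{et}^{\ord}$, computed to have degree $p^{m^{2}}$ in \S\ref{subsec:Morphisms-between-ordinary-=0000E9tale} --- so $\deg\bar{\rho}=1$ and $\bar{\rho}$ is birational. Because $S_{0}(p)_{et}$ is normal by Proposition \ref{prop:=00005BG=0000F6=00005D}, a finite birational morphism onto it is an isomorphism (Zariski's main theorem), so $\bar{\rho}$ is an isomorphism. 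Hence $S_{0}(p)_{et}\cong Z$ is non-singular, $\rho=\bar{\rho}\circ q$ is finite and flat, and $\bar{\rho}$ identifies $S_{0}(p)_{et}$ with $S^{\sharp}/\mathcal{T}S^{\sharp+}$.

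The one point that requires care is the factorization $\rho=\bar{\rho}\circ q$ as a morphism of schemes rather than merely a rational map: one must know that $\mathcal{T}S^{\sharp+}$ annihilates all of $\rho^{\#}\mathcal{O}_{S_{0}(p)_{et}}$, not just the pullbacks of rational functions, and this is exactly where reducedness of $S^{\sharp}$ and the density of $S^{\ord}$, combined with Theorem \ref{thm:fol-mor corresp}, enter. Everything else is formal: the genuine content has already been invested in Theorem \ref{thm:fol-mor corresp}, Theorem \ref{thm:smoothness of S=000023}, Proposition \ref{prop:extension_of_foliation}, and the normality theorem of G\"ortz and Pappas--Zhu (Proposition \ref{prop:=00005BG=0000F6=00005D}).
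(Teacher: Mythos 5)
Your proposal is correct and follows essentially the same route as the paper: form the Ekedahl quotient of $S^{\sharp}$ by $\mathcal{T}S^{\sharp+}$, use density of $S^{\ord}$ together with Theorem \ref{thm:fol-mor corresp} to show that $\rho^{\#}\mathcal{O}_{S_{0}(p)_{et}}$ is killed by the foliation so that $\rho$ factors through the quotient, and then invoke normality of $S_{0}(p)_{et}$ and Zariski's Main Theorem to see the induced finite birational map is an isomorphism. The only cosmetic difference is that you certify birationality by a degree count, where the paper uses the agreement of the two schemes over the ordinary locus.
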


\begin{proof}
By Proposition \ref{prop:=00005BEk=00005D,-Proposition-2.4.} we know
that $\mathcal{T}S^{\sharp+}$ corresponds to a finite flat quotient
map
\[
S^{\sharp}\overset{\widetilde{\rho}}{\to}S_{0}(p)_{et}^{\sim}
\]
onto a non-singular variety $S_{0}(p)_{et}^{\sim},$ which, thanks
to Theorem \ref{thm:fol-mor corresp}, coincides with $S_{0}(p)_{et}$
over $S^{\ord}.$ Denoting, for simplicity, $X=S^{\sharp}$, $Y=S_{0}(p)_{et}$
and $Y^{\sim}=S_{0}(p)_{et}^{\sim}$ we get (by definition) that $\mathcal{O}_{Y^{\sim}}$
is the subsheaf of $\mathcal{O}_{X}$ killed by $\mathcal{T}X^{+}.$
If $s$ is a section of $\mathcal{O}_{Y}$ over a Zariski open $U$
and $\xi\in\mathcal{T}X^{+}(U)$ then over $U\cap Y^{\ord}$ $s$
is killed by $\xi,$ hence by continuity $\xi s=0$ on all of $U$.
This shows $\mathcal{O}_{Y}\subset\mathcal{O}_{Y^{\sim}}\subset\mathcal{O}_{X}$
so the morphism $\rho$ factors as $\sigma\circ\widetilde{\rho}$
for a unique finite birational morphism $\sigma:S_{0}(p)_{et}^{\sim}\to S_{0}(p)_{et}.$
However, according to Proposition \ref{prop:=00005BG=0000F6=00005D},
$S_{0}(p)_{et}$ is normal. Zariski's Main Theorem implies now that
$\sigma$ is an isomorphism, completing the proof.
\end{proof}

\section{Integral subvarieties}

Recall that an integral subvariety of the foliation $\mathcal{T}S^{\sharp+}$
is a non-singular subvariety $Y\subset S^{\sharp}$ for which $\mathcal{T}S^{\sharp+}|_{Y}=\mathcal{T}Y$.
In this section we find two types of integral subvarieties: Shimura
varieties of signature $(m,m)$ embedded in $S$, and the EO stratum
$S_{\fol}.$ We end the paper with the natural question whether these
are the only global integral subvarieties.

\subsection{Shimura subvarieties of signature $(m,m)$}

There are many ways to embed Shimura varieties associated with unitary
groups of signature $(m,m)$ in our unitary Shimura variety $S_{K}$.
These smaller Shimura varieties can be associated with a quasi-split
unitary group, or with an inner form of it. The embeddings extend
to the integral models, hence to their special fibers, and can be
described in terms of the respective moduli problems. For $(n,m)=(2,1)$
and the resulting embeddings of modular curves or Shimura curves in
Picard modular surfaces, see \cite{=00005BdS-G1=00005D} \S4.2.2 or
\cite{=00005BdS-G2=00005D} \S1.4.
\begin{thm}
Let $S'$ be the special fiber of a unitary Shimura variety of signature
$(m,m)$ embedded in $S.$ Then $S'\cap S^{\ord}$ is an integral
subvariety of $\mathcal{T}S^{+}.$
\end{thm}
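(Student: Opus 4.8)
The plan is to verify the defining condition of an integral subvariety directly, namely that for every geometric point $x\in S'\cap S^{\ord}$ one has $\mathcal{T}S^{+}|_{x}=\mathcal{T}S'_{x}$. Both sides are $m^{2}$-dimensional: the right side because a unitary Shimura variety of signature $(m,m)$ has dimension $m^{2}$, and the left side by construction (Proposition \ref{prop:Being a foliation}). So it suffices to prove one inclusion, and the natural one to check is $\mathcal{T}S'_{x}\subset\mathcal{T}S^{+}|_{x}$, i.e. that $\mathcal{T}S'_{x}$ annihilates $KS(\mathcal{P}_{0}\otimes\mathcal{Q})|_{x}$ under the tangent-cotangent pairing.

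**First I would** unwind the moduli-theoretic description of the embedding $S'\hookrightarrow S$. A point of $S'$ is an abelian variety $A$ of dimension $n+m$ which is isogenous (or isomorphic, in the self-dual case) to $B\times C$, where $B$ carries the signature-$(m,m)$ PEL structure and $C$ is a fixed auxiliary abelian variety with $\mathcal{O}_{E}$-action of signature $(n-m,0)$, so that $\Lie(C)$ is purely of type $\Sigma$ and $C$ contributes nothing of type $\overline{\Sigma}$. Consequently, along $S'$ the Hodge bundle decomposes compatibly: $\mathcal{P}|_{S'}=\mathcal{P}_{B}\oplus\omega_{C}$ with $\omega_{C}$ of rank $n-m$, while $\mathcal{Q}|_{S'}=\mathcal{Q}_{B}$ is entirely the $\overline{\Sigma}$-part of $B$. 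The deformation theory of $S'$ inside $S$ then says that the tangent space $\mathcal{T}S'_{x}$ corresponds, under Grothendieck–Messing/Kodaira–Spencer, precisely to those first-order deformations that move only the Hodge filtration of $B$ and keep $\omega_{C}\subset H^{1}_{dR}(C)$ rigid.

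**The key computation** is then to identify $\mathcal{P}_{0}=\mathcal{P}[V]$ along $S'\cap S^{\ord}$. Because $C$ has signature $(n-m,0)$, its $p$-divisible group over the ordinary (hence here automatically $\mu$-ordinary-compatible) locus is an extension involving $(\mathcal{O}_{E}\otimes\mu_{p^{\infty}})^{?}$ and copies of $\mathscr{G}_{\Sigma}$; more precisely $\omega_{C}$ is killed by $V_{\mathcal P}$, so $\omega_{C}\subseteq\mathcal{P}_{0}$, and by the rank count (both have rank $n-m$ over the ordinary locus) we get $\mathcal{P}_{0}|_{S'\cap S^{\ord}}=\omega_{C}$. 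Now I invoke the Kodaira–Spencer characterization from the Lemma preceding Corollary \ref{cor: characterization of TS+}: for $\alpha\in\mathcal{P}_{0}=\omega_{C}$ and $\beta\in\mathcal{Q}=\mathcal{Q}_{B}$ and $\xi\in\mathcal{T}S'_{x}$,
\[
\langle KS(\alpha\otimes\beta),\xi\rangle=\{\nabla_{\xi}\alpha,\beta\}.
\]
Since $\xi$ is tangent to $S'$, it deforms $B$ and $C$ separately, so $\nabla_{\xi}$ preserves $H^{1}_{dR}(C)\subset H^{1}_{dR}(A)$; hence $\nabla_{\xi}\alpha\in H^{1}_{dR}(C)$. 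But $\beta\in\mathcal{Q}_{B}=\omega_{B^{t}}(\Sigma)$ lies in the $B$-factor, which is orthogonal to $H^{1}_{dR}(C^{t})$ under the canonical pairing (the pairing is the orthogonal sum of the pairings on the two factors). Therefore $\{\nabla_{\xi}\alpha,\beta\}=0$, i.e. $\xi\in\mathcal{T}S^{+}|_{x}$. This gives the inclusion and, by the dimension count, the equality.

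**The main obstacle** I anticipate is purely bookkeeping around the isogeny $B\times C\to A$ versus an isomorphism: in general $S'$ maps to $S$ via an isogeny correspondence, so one must check that the splitting of $H^{1}_{dR}(A)$ and of the de Rham pairing into a $B$-part and a $C$-part is respected by the Gauss–Manin connection along $S'$ (it is, since $\nabla$ is functorial for the isogeny and the isogeny is "constant in the $C$-direction" along $S'$), and that $\mathcal{P}_{0}$ really equals $\omega_{C}$ and not merely contains it once one restricts to the ordinary locus — this is where one uses that points of $S'\cap S^{\ord}$ are ordinary in $S$ so that $\operatorname{rk}\mathcal{P}_{0}=n-m$ exactly. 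Also, in the inner-form case one should note that $C$ need not be the same abelian variety at every point, only that its isogeny type is locally constant, which is enough for the sheaf-level argument. Since none of these points is deep, the proof is short once the moduli description is set up; as the authors remark, "part (i) is easy."
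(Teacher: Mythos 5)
Your argument is correct, and it is essentially a worked-out version of the first of the two routes the paper merely gestures at (``the proof of Theorem 2.3(ii) in [dS-G2] can be easily generalized, once the embedding of the appropriate moduli problems is written down explicitly''); the proof the paper actually writes out is the second route, via Moonen's generalized Serre--Tate coordinates. Concretely, the paper observes that points of $S'\cap S^{\ord}$ parametrize ordinary $2m$-dimensional abelian varieties, invokes classical Serre--Tate theory to put a formal torus structure on $\widehat{S'_{x}}$, uses functoriality of Moonen's cascades to identify its image with the canonical torus $\widehat{T}_{\iota(x)}$, and then quotes Theorem \ref{thm:compatibility with Moonen}. You instead work directly with the splitting $A\sim B\times C$ ($C$ of signature $(n-m,0)$), identify $\mathcal{P}_{0}=\omega_{C}$ over the ordinary locus by a type-plus-rank argument, and kill $KS(\mathcal{P}_{0}\otimes\mathcal{Q})$ on $\mathcal{T}S'$ using the orthogonal decomposition of the de~Rham pairing and the fact that $\nabla_{\xi}$ preserves the (constant) $C$-factor --- exactly the mechanism of Corollary \ref{cor: characterization of TS+}. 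Your route is more self-contained (it needs nothing from \S3.3 on Moonen's coordinates and would survive in settings where those are unavailable), at the cost of the bookkeeping you flag: one must pin down the isogeny $B\times C\to A$ and check that the splitting of $H^{1}_{dR}$, of $V$, and of $\{,\}$ is horizontal along $S'$. The paper's route outsources all of that to the functoriality of the Serre--Tate/Moonen structure under embeddings of Shimura data. Two minor points: the cleanest reason $\omega_{C}\subseteq\mathcal{P}[V]$ is pure type bookkeeping ($V_{\mathcal{P}}$ restricted to the $C$-factor lands in $\omega_{C}(\overline{\Sigma})^{(p)}=0$), with ordinariness needed only for the reverse inclusion via the rank count; and you should note explicitly that $\omega_{A^{t}}(\Sigma)$ lies entirely in the $B^{t}$-factor (since $\omega_{C^{t}}(\Sigma)\cong\omega_{C}(\overline{\Sigma})=0$), which is what makes the final pairing vanish.
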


\begin{proof}
The proof of Theorem 2.3(ii) in \cite{=00005BdS-G2=00005D} can be
easily generalized, once the embedding of the appropriate moduli problems
is written down explicitly. A different approach is to use Theorem
\ref{thm:compatibility with Moonen}. The set $S'\cap S^{\ord}$ is
open and dense in $S'$ and the $2m$-dimensional abelian varieties
which it parametrizes are ordinary (in the usual sense). The classical
Serre-Tate theorem attaches a structure of a formal torus to the formal
neighborhood $\widehat{S'_{x}}=Spf(\mathcal{\widehat{O}}_{S',x})$
in $S'$ of a point $x\in S'\cap S^{\ord}$. The compatibility of
Moonen's generalized Serre-Tate coordinates under embeddings of Shimura
varieties shows that under the embedding $\iota$ of $S'$ in $S$
the formal neighborhood $\widehat{S'_{x}}$ gets mapped to $\widehat{T}_{\iota(x)}\subset Spf(\widehat{\mathcal{O}}_{S,\iota(x)}).$
The theorem follows now from Theorem \ref{thm:compatibility with Moonen}.
\end{proof}

\subsection{EO strata}

The proof that the EO stratum $S_{\fol}$ is an integral subvariety
of the foliation $\mathcal{T}S^{+}$ is more difficult. We follow
the strategy outlined in \cite{=00005BdS-G1=00005D}, \S3.4, in particular
Lemma 3.10 there, but the generalization from signature $(2,1)$ to
the general case requires some work.

Recall that we denoted by $S_{\sharp}$ the open subset of $S$ where
$f:S^{\sharp}\to S$ is an isomorphism, and that $S_{\fol}$ is the
unique minimal EO stratum in $S_{\sharp},$ so we are justified in
writing $\mathcal{T}S^{+}$ instead of $\mathcal{T}S^{\sharp+}$ when
we refer to $\mathcal{T}S^{\sharp+}|_{S_{\fol}}.$ Recall also that
$\dim(S_{\fol})=m^{2}=\rk(\mathcal{T}S^{+}),$ a hint that we are
on the right track.

\subsubsection{The Dieudonn\'e module at a point of $S_{\protect\fol}$}

The following Proposition describes the structure of the \emph{contravariant
}Dieudonn\'e module $D_{0}=D(\mathcal{A}_{x}[p])$ at a point $x\in S_{\fol}(k)$
($k,$ as usual, algebraically closed and containing $\kappa$). It
can be deduced from \cite{=00005BMo2=00005D}, \S4.9, see also \cite{=00005BWoo=00005D},
\S3.5. Recall that there exists a canonical identification
\[
D_{0}=H_{dR}^{1}(\mathcal{A}_{x}/k),
\]
and that the skew-symmetric pairing $\{,\}_{\phi}$ on $D_{0}$, induced
by the polarization, becomes under this identification the pairing
$\{x,y\}_{\phi}=\{x,(\phi^{-1})^{*}y\}$ where $\{,\}$ is the canonical
pairing on $H_{dR}^{1}(\mathcal{A}_{x}/k)\times H_{dR}^{1}(\mathcal{A}_{x}^{t}/k)$.
\begin{prop}
\label{prop:DMfol}Let $x\in S_{\fol}(k)$ and $D_{0}=D(\mathcal{A}_{x}[p]).$
There exists a basis 
\[
\{e_{1},\dots,e_{n+m},f_{1},\dots,f_{n+m}\}
\]
 of $D_{0}$ with the following properties.

(i) $\kappa$ acts on the $e_{i}$ via $\Sigma$ and on the $f_{j}$
via $\overline{\Sigma}$.

(ii) $\{e_{i},f_{n+m+1-i}\}_{\phi}=1,$ and the other $\{e_{i},f_{j}\}_{\phi}$,
as well as $\{e_{i},e_{j}\}_{\phi}$ and $\{f_{i},f_{j}\}_{\phi}$,
are all 0.

(iii) The maps $F:D_{0}^{(p)}\to D_{0}$ and $V:D_{0}\to D_{0}^{(p)}$
induced by $\Fr$ and $\Ver$ are given by the following tables. We
abbreviate the list $e_{a},\dots,e_{b}$ as $e_{[a,b]}$ and similarly
with $f_{[a,b]}$. 

\bigskip{}

\renewcommand{\arraystretch}{1.5} \begin{tabular}{|p{1.4cm}||p{2.4cm}|p{2.4cm}|p{2.4cm}|} \hline $i\in $ & $[1, n-m]$ & $[n-m+1, n]$& $[n+1, n+m]$ \\ \hline \hline $F(e_i^{(p)})$ & $0$ & $-f_{i-n+m}$ & $0$ \\ \hline $V(e_i)$ & $0$ & $0$ & $f_{i-n+m}^{(p)}$  \\ \hline \end{tabular}

\bigskip{}

\begin{tabular}{|p{1.4cm}||p{2.4cm}|p{2.4cm}|p{2.4cm}| } \hline $j\in $ & $[1, m]$ & $[m+1, 2m]$& $[2m+1, n+m]$ \\ \hline \hline $F(f_j^{(p)})$ & $-e_j$ & $0 $ & $-e_{j-m}$ \\ \hline \end{tabular}

\bigskip{}

\begin{tabular}{|p{1.4cm}||p{2.4cm}|p{2.4cm}|p{2.4cm}| } \hline $j\in $ & $[1, m]$ & $[m+1, n]$& $[n+1, n+m]$ \\ \hline \hline $V(f_j)$ & $0$ & $e_{j-m}^{(p)}$ & $e_j^{(p)}$ \\ \hline \end{tabular} 

\bigskip{}
In particular, $\omega_{\mathcal{A}_{x}/k}=(D_{0}^{(p)}[F])^{(p^{-1})}=Span_{k}\{e_{[1,n-m]},e_{[n+1,n+m]},f_{[m+1,2m]}\}$
and $\ker(V_{\mathcal{P}})_{x}=\mathcal{P}_{0,x}=Span_{k}\{e_{[1,n-m]}\}.$
\end{prop}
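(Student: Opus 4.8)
The plan is to obtain the asserted basis by specializing to $w_{\fol}$ the explicit description, due to Moonen and Wooding, of the polarized $\mathcal{O}_E$-Dieudonn\'e space attached to an EO stratum. Recall from \S\ref{subsec:The-NP-and EO} that over a geometric point $x$ of $S_w$ the isomorphism class of $\mathcal{A}_x[p]$, together with its $\mathcal{O}_E$-action and the alternating pairing $\{,\}_\phi$ coming from the principal polarization, depends only on $w$, and that \cite{=00005BMo2=00005D}, \S4.9 (see also \cite{=00005BWoo=00005D}, \S3.5) realize this isomorphism class concretely. In that realization one fixes a $\kappa$-split basis of $D_0$ adapted to the two CM types --- the vectors $e_i$ on which $\kappa$ acts through $\Sigma$, the vectors $f_j$ on which it acts through $\overline{\Sigma}$ --- normalized so that $\{,\}_\phi$ is the standard ``anti-diagonal'' alternating form, which is precisely condition (ii); and then the $k$-linear maps $F:D_0^{(p)}\to D_0$ and $V:D_0\to D_0^{(p)}$ carry basis vectors to basis vectors (up to sign) according to the one-line notation of $w$ and the signature $(n,m)$. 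The substance of the proof is the bookkeeping of transcribing this recipe for the particular permutation $w_{\fol}$ introduced in \S\ref{subsec:The-moduli-scheme S=000023}; it yields exactly the three tables.

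Concretely I would proceed in three steps. First, write $w_{\fol}$ out and read off, block by block according to the intervals displayed in the tables, the action of $F$ and $V$ on each $e_i$ and $f_j$; the two displayed identities then follow immediately, since $\omega_{\mathcal{A}_x/k}=(D_0^{(p)}[F])^{(p^{-1})}$ is spanned by $e_{[1,n-m]},e_{[n+1,n+m]},f_{[m+1,2m]}$ (hence $\mathcal{P}_x=\omega_{\mathcal{A}_x/k}(\Sigma)$ is spanned by $e_{[1,n-m]},e_{[n+1,n+m]}$), while $\ker(V_{\mathcal{P}})_x=\mathcal{P}_{0,x}$ is spanned by $e_{[1,n-m]}$ because $V(e_i)=0$ for $i\in[1,n-m]$ and $V(e_i)=f_{i-n+m}^{(p)}\neq 0$ for $i\in[n+1,n+m]$. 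Second, I would check that the quadruple $(D_0,F,V,\{,\}_\phi)$ thus written down is a bona fide polarized $\mathcal{O}_E$-$\mathrm{BT}_1$ of the right numerical type: $F,V$ are $\mathcal{O}_E$-equivariant and $k$-linear, $FV=VF=0$ (as $p$ annihilates $D_0$), $\mathrm{im}(V)=\ker(F)$ and $\mathrm{im}(F)=\ker(V)$ (both equal, up to the Frobenius twist, to $\mathrm{Span}_k\{e_{[1,n-m]},e_{[n+1,n+m]},f_{[m+1,2m]}\}$), $F$ and $V$ are mutually adjoint for $\{,\}_\phi$, the form $\{,\}_\phi$ is $\mathcal{O}_E$-semilinear with the stated values, and the Hodge filtration has $\Sigma$-rank $n$ and $\overline{\Sigma}$-rank $m$ with $a_\Sigma(w_{\fol})=n-m$ and $l(w_{\fol})=m^2$, matching the invariants recorded in \S\ref{subsec:The-NP-and EO}. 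Third, since a polarized $\mathcal{O}_E$-$\mathrm{BT}_1$ over the algebraically closed field $k$ is determined up to isomorphism by its EO type, this identifies $D(\mathcal{A}_x[p])$ with the space just constructed, proving (i)--(iii).

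I expect the only genuine difficulty to be this transcription: one has to get the signs, and above all the order-reversal $i\mapsto n+m+1-i$ in the pairing (ii), correct, so that $F$ and $V$ come out mutually adjoint for $\{,\}_\phi$ and so that $\{,\}_\phi$ is compatible with the $\mathcal{O}_E$-action (i.e.\ $\{\iota(a)u,v\}_\phi=\{u,\iota(\overline a)v\}_\phi$); it is easy to slip by a shift of $m$ in the indices. Beyond that the argument is routine, and I would cross-check the output against the case $(n,m)=(2,1)$ of \cite{=00005BdS-G1=00005D}, \cite{=00005BdS-G2=00005D}.
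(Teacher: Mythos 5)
Your approach --- deducing the proposition by specializing the Moonen--Wooding classification of polarized $\mathcal{O}_E$-$\mathrm{BT}_1$'s to the shuffle $w_{\fol}$ and transcribing the resulting $F$, $V$ and pairing into the tables --- is exactly what the paper does, which offers no argument beyond the citation of \cite{=00005BMo2=00005D}, \S 4.9 and \cite{=00005BWoo=00005D}, \S 3.5. (One small slip in your sanity check: $\mathrm{im}(F)=\ker(V)=\mathrm{Span}_k\{e_{[1,n]},f_{[1,m]}\}$, whereas it is only $\mathrm{im}(V)=\ker(F)$ that equals the Frobenius twist of $\mathrm{Span}_k\{e_{[1,n-m]},e_{[n+1,n+m]},f_{[m+1,2m]}\}$.)
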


\begin{cor}
\label{cor:V_O(Q)_x at S_fol}Notation as above, if $x\in S_{\fol}$,
then $V(\mathcal{Q})_{x}=Span_{k}\{e_{1}^{(p)},\dots,e_{m}^{(p)}\}$
if $2m\le n$, and $V(\mathcal{Q})_{x}=Span_{k}\{e_{1}^{(p)},\dots,e_{n-m}^{(p)},e_{n+1}^{(p)},\dots,e_{2m}^{(p)}\}$
if $n<2m$.
\end{cor}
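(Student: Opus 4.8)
The plan is to read everything off the explicit description of the contravariant Dieudonn\'e module in Proposition~\ref{prop:DMfol}; the corollary is essentially a bookkeeping exercise with that data. First I would recall that $\mathcal{Q} = \omega_{\mathcal{A}/S}(\overline{\Sigma})$, so that at a point $x \in S_{\fol}(k)$, combining part~(i) of the Proposition with the computed value $\omega_{\mathcal{A}_{x}/k} = Span_{k}\{e_{[1,n-m]}, e_{[n+1,n+m]}, f_{[m+1,2m]}\}$, the $\overline{\Sigma}$-part is exactly
\[
\mathcal{Q}_{x} = Span_{k}\{f_{m+1}, \dots, f_{2m}\},
\]
an $m$-dimensional space, while $\mathcal{P}_{x} = Span_{k}\{e_{[1,n-m]}, e_{[n+1,n+m]}\}$.

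Next I would feed the basis $f_{m+1},\dots,f_{2m}$ of $\mathcal{Q}_{x}$ into the last table of Proposition~\ref{prop:DMfol}, which records $V(f_{j})$ for $j$ in the ranges $[1,m]$, $[m+1,n]$, $[n+1,n+m]$. Since $m<n$ by the standing assumption, $[m+1,2m]$ meets only the last two ranges, and the computation splits according to the position of $2m$ relative to $n$. If $2m \le n$, then $[m+1,2m] \subseteq [m+1,n]$, so $V(f_{j}) = e_{j-m}^{(p)}$ for $j=m+1,\dots,2m$, and as $j-m$ runs over $[1,m]$ this gives $V(\mathcal{Q})_{x} = Span_{k}\{e_{1}^{(p)},\dots,e_{m}^{(p)}\}$. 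If $n<2m$, then $[m+1,2m]=[m+1,n]\cup[n+1,2m]$ (both pieces nonempty, since $m<n\le 2m-1$), and the table gives $V(f_{j})=e_{j-m}^{(p)}$ on the first piece, with $j-m$ ranging over $[1,n-m]$, and $V(f_{j})=e_{j}^{(p)}$ on the second, with $j$ ranging over $[n+1,2m]$; hence $V(\mathcal{Q})_{x} = Span_{k}\{e_{1}^{(p)},\dots,e_{n-m}^{(p)},e_{n+1}^{(p)},\dots,e_{2m}^{(p)}\}$.

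To finish I would observe that in both cases the listed vectors are distinct members of the basis $\{e_{i}^{(p)}\}$ of $\mathcal{P}_{x}^{(p)}$ --- in the second case the index sets $[1,n-m]$ and $[n+1,2m]$ are disjoint because $n-m<n+1$ --- so they are linearly independent and the stated spans really are the image $V(\mathcal{Q})_{x}$; as a check, both are $m$-dimensional, since $(n-m)+(2m-n)=m$ in the second case, which incidentally shows that $V_{\mathcal{Q}}$ is injective along $S_{\fol}$. There is no genuine obstacle here: the only point requiring care is keeping the case split $2m\le n$ versus $n<2m$ explicit and not conflating the two reindexings $j\mapsto j-m$ and $j\mapsto j$ coming from the different rows of the table.
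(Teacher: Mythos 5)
Your computation is correct and is exactly the intended argument: the paper states this as an immediate corollary of Proposition \ref{prop:DMfol}, read off by restricting the $V$-table to the basis $f_{m+1},\dots,f_{2m}$ of $\mathcal{Q}_{x}$ and splitting according to whether $2m\le n$ or $n<2m$. Your case analysis, reindexing, and the dimension check $(n-m)+(2m-n)=m$ all match what the proposition's tables give.
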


In passing, we note that the Hasse matrix $H_{\mathcal{A}/S}=V_{\mathcal{P}}^{(p)}\circ V_{\mathcal{Q}}=0$
over $S_{\fol}$ if $2m\le n$ but not if $n<2m$. The Hasse invariant
$h_{\mathcal{A}/S}=\det(H{}_{\mathcal{A}/S})$ always vanishes, of
course.

\subsubsection{Proof of the main theorem}

In the proof of the following theorem we shall separate the three
cases (i) $n=2m$ (ii) $2m<n$ (iii) $n<2m$. Although the idea of
the proof is the same, the three cases become progressively more complicated.
Thus, for the sake of exposition, we felt it was better to treat them
separately, at the price of some repetition.

As a matter of notation, if $f:T\to S$ is a morphism of schemes,
and $\mathcal{F}$ is a coherent sheaf on $S$, we denote by $\mathcal{F}(T)$
the global sections of $f^{*}\mathcal{F}$ on $T$. We shall employ
this notation in particular when $T$ is an infinitesimal neighborhood
of a closed point of $S$, or a closed subscheme of such an infinitesimal
neighborhood.
\begin{thm}
\label{thm:Sfol_integral_submanifold}The EO stratum $S_{\fol}$ is
an integral subvariety of the foliation $\mathcal{T}S^{+},$ i.e.
$\mathcal{T}S^{+}|_{S_{\fol}}=\mathcal{T}S_{\fol}.$ 
\end{thm}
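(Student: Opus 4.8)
The strategy is to show that at every point $x\in S_{\fol}(k)$, the tangent space $\mathcal{T}S_{\fol,x}$ coincides with $\mathcal{T}S^{+}_{x}=KS(\mathcal{P}_{0}\otimes\mathcal{Q})_{x}^{\perp}$. Since both are subspaces of $\mathcal{T}S_{x}$ of the same dimension $m^{2}$, it suffices to prove one inclusion, say $\mathcal{T}S_{\fol,x}\subseteq\mathcal{T}S^{+}_{x}$; equivalently, by Corollary \ref{cor: characterization of TS+}, that every tangent vector $\xi$ to $S_{\fol}$ at $x$ satisfies $\nabla_{\xi}(\mathcal{P}_{0})\subseteq\mathcal{P}_{0}$. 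A first-order deformation $\xi\in S_{\fol}(k[\epsilon])_x$ corresponds, via Grothendieck--Messing / crystalline deformation theory, to a lift of the Hodge filtration $W=\omega_{\mathcal{A}_x/k}\subset D_{0}$ to a filtration $\widetilde{W}\subset D_{0}\otimes k[\epsilon]$ (symmetric for $\{,\}_{\phi}$), and the condition that $\xi$ be tangent to $S_{\fol}$ is exactly the condition that the deformed $p$-torsion group scheme remain in the EO stratum labelled by $w_{\fol}$, i.e. that the new $F$ and $V$ on $D_{0}\otimes k[\epsilon]$ (which are $F\otimes 1$, $V\otimes 1$, independent of $\epsilon$) interact with $\widetilde W$ in the same combinatorial pattern as at $x$. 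The key point is to translate ``$\xi$ tangent to the EO stratum'' into concrete linear conditions on the deformation using the explicit basis of Proposition \ref{prop:DMfol}, and then check these conditions force $\nabla_{\xi}(\mathcal{P}_{0})\subseteq\mathcal{P}_{0}$.

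More concretely, I would use the \emph{canonical filtration} of $\mathcal{A}[p]$ over $S_{\fol}$ — the filtration by $F$ and $V$, which over an EO stratum has locally constant ranks and hence defines sub-bundles of $H^{1}_{dR}(\mathcal{A}/S)|_{S_{\fol}}$. Because the Gauss--Manin connection commutes with $F$ and $V$ (they come from isogenies), $\nabla$ restricted to tangent directions \emph{along} $S_{\fol}$ preserves every step of this canonical filtration. The subsheaf $\mathcal{P}_{0}=\mathcal{P}[V]=\mathrm{Span}\{e_{[1,n-m]}\}$ (by the last line of Proposition \ref{prop:DMfol}) should be recognized as one of the pieces — or an intersection of pieces of type $\Sigma$ — of this canonical filtration over $S_{\fol}$. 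Concretely $\mathcal{P}_{0}$ is cut out inside $\omega(\Sigma)$ by the vanishing of $V_{\mathcal{P}}$, and over $S_{\fol}$ the image $V(\mathcal{Q})$ (computed in Corollary \ref{cor:V_O(Q)_x at S_fol}) together with $V(\mathcal{P})$ pins down $\mathcal{P}_0$ canonically as a subquotient of the canonical filtration. Hence a tangent vector $\xi$ along $S_{\fol}$ has $\nabla_{\xi}$ preserving $\mathcal{P}_{0}$, which is precisely the criterion of Corollary \ref{cor: characterization of TS+}. This gives the inclusion $\mathcal{T}S_{\fol}\subseteq\mathcal{T}S^{+}|_{S_{\fol}}$, and equality then follows from the dimension count $\dim S_{\fol}=m^{2}=\rk\mathcal{T}S^{+}$, noting $S_{\fol}$ is smooth (EO strata are smooth) so its tangent bundle genuinely has that rank.

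The main obstacle is making rigorous the claim that $\mathcal{P}_{0}$ is ``part of the canonical filtration'' in a way that is stable under $\nabla_{\xi}$ for $\xi$ tangent to $S_{\fol}$ — i.e. that being in the EO stratum $S_{\fol}$ forces the relevant filtration steps to deform as sub-crystals. This is where the three cases $n=2m$, $2m<n$, $n<2m$ genuinely differ: when $2m\le n$ one has $V(\mathcal{Q})_{x}=\mathrm{Span}\{e_{1}^{(p)},\dots,e_{m}^{(p)}\}$ and the Hasse map vanishes on $S_{\fol}$, so $\mathcal{P}_{0}$ is cleanly the kernel of $V_{\mathcal{P}}$ which is a sub-bundle with $\nabla$-stable formation along $S_{\fol}$; when $n<2m$ the Hasse map does not vanish and $\mathcal{P}_{0}$ is a more delicate intersection $\mathcal{P}[V]\cap(\text{something involving }V(\mathcal{Q})^{(p^{-1})})$, so one must argue separately that the relevant incidence conditions defining $S_{\fol}$ (as a closed subscheme of $S^{\sharp}$, or of an appropriate iterated Grassmannian bundle as in the proof of Theorem \ref{thm:smoothness of S=000023}) are exactly the vanishing locus of the derivative obstructions, forcing $\nabla_{\xi}(\mathcal{P}_{0})\subseteq\mathcal{P}_{0}$. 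In each case the computation reduces, after choosing the basis of Proposition \ref{prop:DMfol}, to checking that the matrix entries of $\nabla_{\xi}$ landing outside $\mathcal{P}_{0}$ are precisely those whose vanishing is imposed by tangency to the stratum; I expect this to be a finite but case-dependent linear-algebra verification modeled on \cite{=00005BdS-G1=00005D}, Lemma 3.10.
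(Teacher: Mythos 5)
Your skeleton coincides with the paper's: work at a point $x\in S_{\fol}(k)$ with the explicit basis of Proposition \ref{prop:DMfol}, parametrize first-order deformations of the Hodge filtration, invoke the canonical filtration over the EO stratum, and close with the dimension count $\dim S_{\fol}=m^{2}=\rk(\mathcal{T}S^{+})$. But the central deduction is not justified. You argue: $\nabla$ commutes with $F$ and $V$, hence tangent directions along $S_{\fol}$ preserve every step of the canonical filtration, hence $\nabla_{\xi}(\mathcal{P}_{0})\subseteq\mathcal{P}_{0}$. The first implication proves too much to be useful: the steps of the canonical filtration of $\mathcal{D}$ (images under $F$, preimages under $V$, of Frobenius twists) are $\nabla$-stable in \emph{all} directions wherever they form flat sub-bundles, precisely because $F$ and $V$ are horizontal and Frobenius pullbacks carry canonical horizontal generators; so this property cannot single out $\mathcal{T}S_{\fol}$ inside $\mathcal{T}S|_{S_{\fol}}$. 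The second implication fails because $\mathcal{P}_{0}=\omega_{\mathcal{A}}(\Sigma)\cap\ker V$ is not a step or subquotient of the canonical filtration of $\mathcal{D}$ as a sub-crystal: it involves the Hodge filtration, which is exactly what deforms. By Corollary \ref{cor: characterization of TS+}, ``$\nabla_{\xi}$ preserves $\mathcal{P}_{0}$'' amounts to ``$KS(\xi)$ annihilates $\mathcal{P}_{0}\otimes\mathcal{Q}$'', a genuine condition on the deformation, not a formal consequence of horizontality.

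The missing mechanism is the following. Over $\Spec(R)$ with $R=\mathcal{O}_{S,x}/\mathfrak{m}_{S,x}^{2}$ the absolute Frobenius factors through $k$, so for \emph{any} sub-bundle $\mathcal{M}\subseteq\mathcal{D}$ along $S_{\fol}$ the twist $\mathcal{M}^{(p)}$ restricts to the \emph{constant} subspace $R_{\fol}\otimes_{k}\mathcal{M}^{(p)}_{x}$. One must then produce a piece $\mathcal{M}$ of the canonical filtration (which exists as a flat family over the stratum by Oort) with $V(\mathcal{Q})\subseteq\mathcal{M}^{(p)}$ over $S_{\fol}$, chosen so that computing $V$ on the deformed generators of $\mathcal{Q}$ modulo the constant subspace $R\otimes_{k}\mathcal{M}^{(p)}_{x}$ exhibits exactly the coefficients $u_{ij}$ of the deformation in the directions transverse to $\mathcal{T}S^{+}$; the containment forces $u_{ij}=0$ on $R_{\fol}$, and the dimension count identifies $\{u_{ij}=0\}$ as the full set of infinitesimal equations of $S_{\fol}$. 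When $2m\le n$ one may take $\mathcal{M}=FV^{-1}(0)(\Sigma)$, with $V(\mathcal{Q})=\mathcal{M}^{(p)}$. When $n<2m$ no equality is available and one needs the specific choice $\mathcal{M}=V^{-2r}F^{2r+1}V^{-1}(0)(\Sigma)$, with $r$ determined by $r/(r+1)<m/n\le(r+1)/(r+2)$, together with a nontrivial Dieudonn\'e-module computation identifying $\mathcal{M}_{x}$; your placeholder ``$\mathcal{P}[V]\cap(\text{something involving }V(\mathcal{Q})^{(p^{-1})})$'' does not supply this (an untwist of a sub-bundle of $\mathcal{D}^{(p)}$ is not in general defined), so the case $n<2m$ remains open in your plan.
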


\begin{proof}
\textbf{1}. Let $x\in S_{\fol}$ and $R=\mathcal{O}_{S,x}/\mathfrak{m}_{S,x}^{2}$.
Let $D=H_{dR}^{1}(\mathcal{A}/R)$ be the infinitesimal deformation
of $D_{0}$. Although $\Spec(R)$ is not smooth over $k$, the $R$-module
$D$ inherits, by base-change, the Gauss-Manin connection 
\[
\nabla:D\to D\otimes_{R}(R\otimes_{\mathcal{O}_{S}}\Omega_{S}^{1}).
\]
As it admits a basis of horizontal sections over $\Spec(R),$ we may
write $D=D_{0}\otimes_{k}R$, the horizontal sections being $D_{0}\otimes_{k}k=D_{0}.$
Since the Gauss-Manin connection is compatible with isogenies, $F$
and $V$ take horizontal sections to horizontal sections. Thus Proposition
\ref{prop:DMfol}(iii) holds also for $F:D^{(p)}\to D$ and $V:D\to D^{(p)}.$
The pairing $\{,\}_{\phi}$ is horizontal for the Gauss-Manin connection,
i.e.
\[
d\{x,y\}_{\phi}=\{\nabla x,y\}_{\phi}+\{x,\nabla y\}_{\phi},
\]
so the formulae from part (ii) of the Proposition also persist in
$D$. What \emph{does change}, and, according to Grothendieck, completely
determines the infinitesimal deformation, is the Hodge filtration
$\omega_{\mathcal{A}/R}.$ One sees that the most general deformation
of $\omega_{\mathcal{A}_{x}/k}$ is given by

\begin{multline} \omega_{\mathcal A/R}(\Sigma) = \text{Span}_R \{ e_i + \sum_{j = n-m+1}^n u_{ij}e_j, \;\; e_\ell + \sum_{j = n-m+1}^n v_{\ell j} e_j  \;:  \\ 1 \leq i \leq n-m, \; n+1 \leq \ell \leq n+m \}, \end{multline}where
the $nm=(n-m)m+m^{2}$ variables $u_{ij}$ and $v_{\ell j}$ are local
parameters at $x,$ and their residues modulo $\mathfrak{m}_{S,x}^{2}$
form a basis for $\mathcal{T}S_{x}^{\vee}.$ The deformation $\omega_{\mathcal{A}/R}(\overline{\Sigma})$
is then completely determined by $\omega_{\mathcal{A}/R}(\Sigma)$
and the condition that $\omega_{\mathcal{A}/R}$ is isotropic for
$\{,\}_{\phi}.$ A small computation reveals that it is given by

\begin{multline} \omega_{\mathcal A/R}(\overline \Sigma) = \text{Span}_R \{ f_{n+m+1 - j} - \sum_{i =1}^{n-m} u_{ij}f_{n+m+1 - i} - \sum_{\ell = n+1}^{n+m} v_{\ell j}f_{n+m + 1 - \ell} \;:  \\  n-m+1 \leq j \leq n \}. \end{multline} Compare
the proof of Theorem \ref{thm:smoothness of S=000023}. The data encoded
in the matrices $u$ and $v$ is just the data denoted there by
\[
\varphi\in\Hom_{\kappa}(W,H/W)^{sym}=\Hom(P,H(\Sigma)/P).
\]

\medskip{}

\textbf{2. }Consider the abelian scheme $\mathcal{A}^{(p)}$ over
$\Spec(R),$ and note that it is constant:
\[
\mathcal{A}^{(p)}=\Spec(R)\times_{\Phi_{R},\Spec(R)}\mathcal{A}=\Spec(R)\times_{\Spec(k)}\mathcal{A}_{x}^{(p)},
\]
since the absolute Frobenius $\phi_{R}$ of the ring $R$ factors
as
\begin{equation}
R\twoheadrightarrow k\overset{\phi_{k}}{\to}k\hookrightarrow R.\label{eq:FrobFactorization}
\end{equation}
Inside $D^{(p)}=H_{dR}^{1}(\mathcal{A}^{(p)}/R)=R\otimes_{\phi,R}D$
we therefore get
\[
\omega_{\mathcal{A}^{(p)}/R}(\overline{\Sigma})=\omega_{\mathcal{A}/R}(\Sigma)^{(p)}=Span_{R}\{e_{i}^{(p)},e_{\ell}^{(p)};\,1\le i\le n-m,\,n+1\le\ell\le n+m\}.
\]
In particular, 
\[
\mathcal{P}_{0}^{(p)}(\Spec(R))=R\otimes_{k}\mathcal{P}_{0,x}^{(p)}=Span_{R}\{e_{i}^{(p)};\,1\le i\le n-m\}.
\]

\medskip{}

\textbf{3. }Let us compute the image of a typical generator of $\mathcal{Q}(\Spec(R))=\omega_{\mathcal{A}/R}(\overline{\Sigma})$
under $V.$ 

\begin{equation}
V(f_{n+m+1-j}-\sum_{i=1}^{n-m}u_{ij}f_{n+m+1-i}-\sum_{\ell=n+1}^{n+m}v_{\ell j}f_{n+m+1-\ell})\equiv\label{eq:V-image}
\end{equation}
\[
-\sum_{i=1}^{\min(m,n-m)}u_{ij}e_{n+m+1-i}^{(p)}-\sum_{i=\min(m,n-m)+1}^{n-m}u_{ij}e_{n+1-i}^{(p)}\mod R\otimes V(\mathcal{Q})_{x}.
\]
By Corollary \ref{cor:V_O(Q)_x at S_fol} this image is contained
in $R\otimes V(\mathcal{Q})_{x}$ if and only if all $u_{ij}=0.$

\medskip{}

\textbf{4. }If $n=2m$ we can finish the proof as follows. Corollary
\ref{cor:V_O(Q)_x at S_fol} implies that over $S_{\fol}$ we then
have $V(\mathcal{Q})=\mathcal{P}_{0}^{(p)},$ because the same holds
at every $k$-valued point of $S_{\fol}$ and the base is reduced.
Let $R_{\fol}$ be the quotient of $R$ defined by
\[
\Spec(R_{\fol})=\Spec(R)\cap S_{\fol}.
\]
We get
\[
V(\mathcal{Q})(\Spec(R_{\fol}))=\mathcal{P}_{0}^{(p)}(\Spec(R_{\fol}))=R_{\fol}\otimes_{k}\mathcal{P}_{0,x}^{(p)}=R_{\fol}\otimes_{k}V(\mathcal{Q})_{x}.
\]
By Point 3 this means that over $\Spec(R_{\fol})$ we must have all
$u_{ij}=0.$ A dimension count shows that $\{u_{ij}=0\}$ is actually
the set of equations defining $S_{\fol}$ infinitesimally, i.e. $R_{\fol}=R/(u_{ij}).$
Thus $\mathcal{T}S_{\fol,x}$ is spanned by $\{\partial/\partial v_{\ell j};\,n+1\le\ell\le n+m,\,\,n-m+1\le j\le n\}$. 

On the other hand, from the explicit description of $\mathcal{P}(\Spec(R))=\omega_{\mathcal{A}/R}(\Sigma)$,
and from the characterization of $\mathcal{T}S^{+}$ given in Corollary
\ref{cor: characterization of TS+} (which holds, with the same proof, over all of $S_\sharp$), we find that $\mathcal{T}S_{x}^{+}$
is also spanned by $\partial/\partial v_{\ell j}$ ($n+1\le\ell\le n+m,\,n-m+1\le j\le n)$.
Indeed, $\mathcal{P}_{0}(\Spec(R))$ is spanned over $R$ by the sections
\[
e_{i}+\sum_{j=n-m+1}^{n}u_{ij}e_{j},
\]
which are killed by $\nabla_{\partial/\partial v_{\ell j}}$, but
are sent to sections which are outside $\mathcal{P}_{0}(\Spec(R))$
by $\nabla_{\partial/\partial u_{ij}}$. Thus, given $\xi\in\mathcal{T}S_{x}$,
$\nabla_{\xi}$ preserves $\mathcal{P}_{0}(\Spec(R))$ if and only
if $\xi$ is a linear combination of the $\partial/\partial v_{\ell j}$.
This concludes the proof of the theorem when $n=2m.$

\medskip{}

\textbf{5. }To finish the proof under the more general assumption
$2m\le n$ we must show that $\{u_{ij}=0\}$ is \emph{always} the
system of infinitesimal equations for $S_{\fol}.$ For that it is
enough to prove the following claim, that generalizes what we have
found for $n=2m.$ Let 
\[
\mathcal{D}=H_{dR}^{1}(\mathcal{A}/S),
\]
endowed with endomorphisms by $\mathcal{O}_{E}$ and the bilinear
form $\{,\}_{\phi}.$ This $\mathcal{D}$ is the unitary Dieudonn\'e
space of $\mathcal{A}[p]$ over $S,$ in the sense of \cite{=00005BWe=00005D}
(5.5). It is a locally free $\mathcal{O}_{S}$-module of rank $2(n+m)$,
$\omega_{\mathcal{A}/S}$ is a maximal isotropic sub-bundle, and $D=\mathcal{D}(\Spec(R))$
is the base-change of $\mathcal{D}$ under $\mathcal{O}_{S}\to\mathcal{O}_{S,x}/\mathfrak{m}_{S,x}^{2}=R.$

\medskip{}
\textbf{Claim. }\emph{Let $2m\le n.$ Over} $S_{\fol}$ \emph{there
is a sub-bundle} $\mathcal{M}\subset\mathcal{D}$ \emph{such that
at each geometric point} $x\in S_{\fol}(k)$, $V(\mathcal{Q})_{x}=\mathcal{M}_{x}^{(p)}\subset\mathcal{D}_{x}^{(p)}$.
(In fact, $\mathcal{M}$ will be a sub-bundle of $\omega_{\mathcal{A}/S_{\fol}}$.)

\medskip{}

Assuming the claim has been proved, we proceed as in the case $n=2m,$
when we identified $\mathcal{M}$ with $\mathcal{P}_{0}$. As the
base is reduced, $V(\mathcal{Q})=\mathcal{M}^{(p)}$. Since the absolute
Frobenius $\phi_{R}$ of the ring $R$ factors as in (\ref{eq:FrobFactorization})
and similarly for its quotient ring $R_{\fol}$, we get 
\[
\mathcal{M}^{(p)}(\Spec(R_{\fol}))=R_{\fol}\otimes_{k}\mathcal{M}_{x}^{(p)},
\]
as sub-modules of $\mathcal{D}^{(p)}(\Spec(R_{\fol}))=R_{\fol}\otimes_{k}D_{0}^{(p)}$. 

This means that $V(\mathcal{Q})(\Spec(R_{\fol}))$ lies in the subspace
$R_{\fol}\otimes V(\mathcal{Q})_{x}$. But we have seen that a typical
generator of $\mathcal{Q}(\Spec(R_{\fol}))$  maps to a vector outside
$R_{\fol}\otimes_{k}V(\mathcal{Q})_{x}$, unless all $u_{ij}=0.$
We conclude that $R_{\fol}=R/(u_{ij})$ as before.

\medskip{}

\textbf{6.}\textbf{\emph{ }}\emph{Proof of Claim. }The key for proving
the claim is the observation that if $2m\le n$
\[
V(\mathcal{Q})_{x}=\{F(D_{0}[V](\overline{\Sigma})^{(p)})\}^{(p)}.
\]
Indeed,
\[
D_{0}[V](\overline{\Sigma})=Span_{k}\{f_{1},\dots,f_{m}\},
\]
\[
F(Span_{k}\{f_{1}^{(p)},\dots,f_{m}^{(p)}\})=Span_{k}\{e_{1},\dots,e_{m}\},
\]
so we may use Corollary \ref{cor:V_O(Q)_x at S_fol}. Now $F(D_{0}[V]^{(p)})$
is part of the \emph{canonical filtration} of $D_{0}$ in the sense
of \cite{=00005BMo2=00005D} 2.5 (the part commonly denoted ``$FV^{-1}(0)$'').
It is therefore the (contravarient) Dieudonn\'e module of $\mathcal{A}_{x}[p]/\mathcal{N}_{x}$
for a certain subgroup scheme $\mathcal{N}_{x}$ of $\mathcal{A}_{x}[p]$
which belongs to the \emph{canonical filtration }of the latter, in
the sense of \cite{=00005BOo=00005D} (2.2). 

The point is that over any EO stratum, in particular over $S_{\fol}$,
the canonical filtration of $\mathcal{A}[p]$ \emph{exists} as a filtration
by finite flat subgroup schemes, and yields the canonical filtration
at each geometric point by specialization. See Proposition (3.2) in
\cite{=00005BOo=00005D}. Thus the $\mathcal{N}_{x}$ are the specializations
of a finite flat group scheme $\mathcal{N}$ over $S_{\fol}$. Letting
$\mathcal{M}$ be the $\Sigma$-part of the Dieudonn\'e module of $\mathcal{A}[p]/\mathcal{N}$
proves the claim. Alternatively, we can define $\mathcal{M}$ directly
as
\[
\mathcal{M}=F(\mathcal{D}[V](\overline{\Sigma})^{(p)})
\]
and use the constancy of fiber ranks over the reduced base $S_{\fol}$
to show that this is a sub-bundle of $\mathcal{D}.$

\medskip{}

\textbf{7.} \emph{The case $n<2m$}. The key idea when $2m\le n$ was the observation that
over $S_{\fol}$ the sub-bundle $V(\mathcal{Q})\subset\mathcal{D}^{(p)}$
was of the form $\mathcal{M}^{(p)}$ for a sub-bundle $\mathcal{M}\subset\mathcal{D}$.
This $\mathcal{M}$ was obtained as the $\Sigma$-part of a certain
piece in the canonical filtration of $\mathcal{D},$ namely $\mathcal{M}=FV^{-1}(0)(\Sigma).$
No such piece of the canonical filtration works if $n<2m.$ We are
able however to replace the equality $V(\mathcal{Q})=\mathcal{M}^{(p)}$
by an \emph{inclusion} $V(\mathcal{Q})\subset\mathcal{M}^{(p)}$ for
a carefully chosen $\mathcal{M}$, and modify the arguments accordingly. Assume therefore that $n<2m$.

Let the natural number $r\ge1$ satisfy
\[
\frac{r}{r+1}<\frac{m}{n}\le\frac{r+1}{r+2}.
\]
Let
\[
\mathcal{M}=V^{-2r}F^{2r+1}V^{-1}(0)(\Sigma).
\]
More precisely, we consider $V^{-2r}\{(F^{2r+1}(\mathcal{D}[V]^{(p^{2r+1})}))^{(p^{2r})}\}\subset\mathcal{D}$.
That this is a well-defined sub-bundle of $\mathcal{D},$ over any
EO stratum, and in particular over $S_{\fol},$ follows as before
from Proposition (3.2) in \cite{=00005BOo=00005D}. Hence the same
applies to its $\Sigma$-part, which is $\mathcal{M}.$

\medskip{}

\textbf{Claim. }Let $r$ and $\mathcal{M}$ be as above. Then, using
the notation of Proposition \ref{prop:DMfol}: (i) For any $x\in S_{\fol}$
\[
\mathcal{M}_{x}=Span_{k}\{e_{1},\dots,e_{2m}\}.
\]
(ii) Over $S_{\fol}$ we have $V(\mathcal{Q})\subset\mathcal{M}^{(p)}.$

\medskip{}

Part (i) will be proved in Lemma \ref{lem:DM_computation_M} below.
Part (ii) follows from Corollary \ref{cor:V_O(Q)_x at S_fol}. By
the corollary, the inclusion $V(\mathcal{Q})_{x}\subset\mathcal{M}_{x}^{(p)}$
holds between the \emph{fibers} of the two sub-bundles at any geometric
point $x\in S_{\fol}(k),$ and the base is reduced.

We can now apply a small variation on the case $2m\le n.$ From the
Claim we obtain
\[
V(\mathcal{Q})(\Spec(R_{\fol}))\subset\mathcal{M}^{(p)}(\Spec(R_{\fol}))=R_{\fol}\otimes_{k}\mathcal{M}_{x}^{(p)}=Span_{R_{\fol}}\{e_{1}^{(p)},\dots,e_{2m}^{(p)}\}.
\]
However, when $n<2m$ (\ref{eq:V-image}) implies
\[
V(f_{n+m+1-j}-\sum_{i=1}^{n-m}u_{ij}f_{n+m+1-i}-\sum_{\ell=n+1}^{n+m}v_{\ell j}f_{n+m+1-\ell})\equiv
\]
\[
-\sum_{i=1}^{n-m}u_{ij}e_{n+m+1-i}^{(p)}\mod R\otimes_{k}\mathcal{M}_{x}^{(p)}.
\]
Since $e_{n+m+1-i}^{(p)}$ ($1\le i\le n-m$) \emph{remain linearly
independent} modulo $\mathcal{M}_{x}^{(p)}$ we conclude that in $R_{\fol}$
we must have $\overline{u}_{ij}=0$. As before, this implies that
$R_{\fol}=R/(u_{ij})$, and concludes the proof of the theorem.
\end{proof}

\subsubsection{A Dieudonn\'e module computation}

To complete the proof of Theorem \ref{thm:Sfol_integral_submanifold}
when $n<2m$ we need to prove the following.
\begin{lem}
\label{lem:DM_computation_M}Let notation be as in Proposition \ref{prop:DMfol},
and let $r\ge1$ satisfy $r/(r+1)<m/n\le(r+1)/(r+2)$. Then
\[
V^{-2r}F^{2r+1}V^{-1}(0)=Span_{k}\{e_{[1,2m]},f_{[1,rn-(r-1)m]}\}.
\]
\end{lem}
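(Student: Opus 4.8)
The plan is to compute the chain $V^{-1}(0) \subset \mathcal{D}_0$, then apply $F^{2r+1}$, then $V^{-2r}$, by iterating the explicit tables for $F$ and $V$ in Proposition \ref{prop:DMfol} and keeping careful track of the index ranges. First I would record the starting point: $\mathcal{D}_0[V] = \ker(V \colon D_0 \to D_0^{(p)})$. From the tables, $V(e_i) = 0$ exactly for $i \in [1,n]$, and $V(f_j) = 0$ exactly for $j \in [1,m]$; hence $D_0[V] = \mathrm{Span}_k\{e_{[1,n]}, f_{[1,m]}\}$. Next, $F^{2r+1}$ applied (with the appropriate Frobenius twists absorbed into the notation) to this span: from the $F$-tables, $F(e_i^{(p)}) \ne 0$ only for $i \in [n-m+1,n]$, where it gives $-f_{[1,m]}$, and $F(f_j^{(p)}) \ne 0$ only for $j \in [1,m] \cup [2m+1,n+m]$, where it gives $-e_{[1,m]}$ and $-e_{[m+1,n]}$ respectively. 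So one application of $F$ sends $\mathrm{Span}\{e_{[1,n]}\} \mapsto \mathrm{Span}\{f_{[1,m]}\}$ and $\mathrm{Span}\{f_{[1,m]}\} \mapsto \mathrm{Span}\{e_{[1,m]}\}$; so $F^2$ shrinks $\mathrm{Span}\{e_{[1,n]}\}$ to $\mathrm{Span}\{e_{[1,m]}\}$, and one checks this is then fixed by further powers of $F$ (since $F(e_i^{(p)}) = 0$ for $i \le m \le n-m$, using $n \ge m$... actually care is needed: here $n < 2m$, so $m > n - m$, and the relevant stabilization has to be re-examined — see below). The upshot of the $F$-step should be $F^{2r+1}(D_0[V]^{(\cdot)}) = \mathrm{Span}_k\{e_{[1,m]}, f_{[1,m]}\}$ once $2r+1 \ge 2$, i.e. always.

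Then comes the main computation: applying $V^{-2r}$ to $\mathrm{Span}\{e_{[1,m]}, f_{[1,m]}\}$. Here $V^{-1}(N)$ means the full preimage $\{x : V(x) \in N^{(p)}\}$, which always contains $D_0[V] = \mathrm{Span}\{e_{[1,n]}, f_{[1,m]}\}$ and is enlarged by the vectors mapping onto the target. From the $V$-tables: $V(e_i) = f_{i-n+m}^{(p)}$ for $i \in [n+1,n+m]$, i.e. $V$ carries $e_{n+m+1-j} \mapsto f_{m+1-j}^{(p)}$ type relations; and $V(f_j) = e_{j-m}^{(p)}$ for $j \in [m+1,n]$, $V(f_j) = e_j^{(p)}$ for $j \in [n+1,n+m]$. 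So $V^{-1}$ of a span of $e$'s picks up new $f$'s, and $V^{-1}$ of a span of $f$'s picks up new $e$'s. Starting from $\mathrm{Span}\{e_{[1,m]}, f_{[1,m]}\}$ and alternately enlarging, each pair of steps $V^{-2}$ should add a block of size roughly $m$ to one of the index ranges; after $2r$ steps the $e$-range has grown to $[1,2m]$ (it stabilizes there because $V(f_j) = e_j^{(p)}$ only reaches up to $e_{n+m}$, but the bound $2m$ comes from the combinatorics $r/(r+1) < m/n$), and the $f$-range has grown to $[1, rn - (r-1)m]$. I would prove the precise ranges by an explicit induction on $r$: set up the inductive hypothesis that $V^{-2r}F^{2r+1}V^{-1}(0) = \mathrm{Span}\{e_{[1,2m]}, f_{[1, rn-(r-1)m]}\}$ and verify the step $V^{-2}$ using the tables, checking at the end that the inequality $rn - (r-1)m \le n$ (equivalently $r/(r+1) < m/n$) guarantees $f_{[1,rn-(r-1)m]} \subseteq f_{[1,n]}$ so that $V$ on these $f$'s lands in the $e$-span as claimed, while $(r+1)n - rm > n$ (from $m/n \le (r+1)/(r+2)$) shows the process genuinely continues / stabilizes correctly at the $r$-th stage.

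The main obstacle I expect is bookkeeping with the three-way Frobenius twists and the overlapping index ranges: the tables for $F$ and $V$ have different breakpoints ($n-m$, $n$, $n+m$ for one, $m$, $2m$, $n+m$ for another, $m$, $m+1$, $n+m$ for a third), and since $n < 2m$ these breakpoints interleave in a way opposite to the more intuitive $2m \le n$ case, so one must be scrupulous about which range each index lands in after each application. In particular the stabilization of the $e$-block at exactly $[1,2m]$ (rather than growing further) relies on the precise bound defining $r$, and verifying that the $f$-block lands at exactly $[1, rn-(r-1)m]$ and not one block too few or too many requires pinning down, at each stage, the image of $V$ restricted to the newly added generators. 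I would organize this as a clean induction, stating the inductive claim for both $V^{-2r}$ and the intermediate $V^{-(2r-1)}$ stage (so the induction step only ever applies $V^{-1}$ once), which keeps each verification to a single table lookup.
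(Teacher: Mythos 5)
Your overall strategy --- iterating the explicit $F$- and $V$-tables of Proposition \ref{prop:DMfol} on spans of the form $\mathrm{Span}_k\{e_{[1,a]},f_{[1,b]}\}$ and proving the precise index ranges by induction --- is exactly the paper's approach (the paper writes $D_0(a,b)$ for such a span and records once and for all the transition rules $a\mapsto a^{\pm}$, $b\mapsto b^{\pm}$). However, there is a concrete error at the $F$-stage that would derail the computation.

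You assert that $F^{2r+1}V^{-1}(0)=\mathrm{Span}_k\{e_{[1,m]},f_{[1,m]}\}$ for every $r$, on the grounds that the chain stabilizes after two applications of $F$. It does not. The first step $F(V^{-1}(0))=F(\mathrm{Span}\{e_{[1,n]},f_{[1,m]}\})=\mathrm{Span}\{e_{[1,m]},f_{[1,m]}\}$ is correct, but the lemma lives in the regime $n<2m$ (forced by $r\ge1$ and $r/(r+1)<m/n$), so the block $e_{[1,a]}$ with $n-m\le a\le m$ overlaps the range $[n-m+1,n]$ where $F(e_i^{(p)})=-f_{i-n+m}$; hence $F(\mathrm{Span}\{e_{[1,a]}\})=\mathrm{Span}\{f_{[1,a-(n-m)]}\}$, and each double application of $F$ \emph{shrinks} both indices by $n-m$ rather than fixing them. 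The correct intermediate value is
\[
F^{2i+1}V^{-1}(0)=\mathrm{Span}_k\{e_{[1,(i+1)m-in]},\,f_{[1,(i+1)m-in]}\},
\]
and the hypothesis $r/(r+1)<m/n$ is precisely what keeps $(r+1)m-rn>0$ at $i=r$ --- this is where that inequality first enters, not only in the $V^{-1}$-stage. Launching the $V^{-2r}$ induction from your (too large) space gives the wrong answer: already for $r=1$ one finds $V^{-2}(\mathrm{Span}\{e_{[1,m]},f_{[1,m]}\})=\mathrm{Span}\{e_{[1,2n-m]},f_{[1,n]}\}$, which agrees with the asserted $\mathrm{Span}\{e_{[1,2m]},f_{[1,n]}\}$ only when $2n=3m$. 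With the corrected starting point $\mathrm{Span}\{e_{[1,(r+1)m-rn]},f_{[1,(r+1)m-rn]}\}$, your proposed two-step induction on $j$ for $V^{-2j}$ does land on $\mathrm{Span}\{e_{[1,2m]},f_{[1,rn-(r-1)m]}\}$ at $j=r$, exactly as in the paper. You flagged that the stabilization ``has to be re-examined'' in the case $n<2m$, but then resolved it the wrong way; the rest of the plan is sound once this is repaired.
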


\begin{proof}
Let $D_{0}(a,b)=Span_{k}\{e_{[1,a]},f_{[1,b]}\}.$ We first observe
that
\[
FD_{0}(a,b)=D_{0}(a^{-},b^{-}),\,\,\,\,V^{-1}D_{0}(a,b)=D_{0}(a^{+},b^{+})
\]
where
\[
a^{-}=\begin{cases}
\begin{array}{c}
b\\
m\\
b-m
\end{array} & \begin{array}{c}
0\le b\le m\\
m<b\le2m\\
2m<b\le n+m
\end{array}\end{cases}
\]
\[
b^{-}=\begin{cases}
\begin{array}{c}
0\\
a-n+m\\
m
\end{array} & \begin{array}{c}
0\le a\le n-m\\
n-m<a\le n\\
n<a\le n+m
\end{array}\end{cases}
\]

\[
a^{+}=\begin{cases}
\begin{array}{c}
n\\
b+n-m\\
n+m
\end{array} & \begin{array}{c}
0\le b\le m\\
m<b\le2m\\
2m<b\le n+m
\end{array}\end{cases}
\]
\[
b^{+}=\begin{cases}
\begin{array}{c}
a+m\\
n\\
a
\end{array} & \begin{array}{c}
0\le a\le n-m\\
n-m<a\le n\\
n<a\le n+m
\end{array}\end{cases}.
\]

To be precise, we should have written $FD_{0}(a,b)^{(p)}=D_{0}(a^{-},b^{-})$
etc., but from now on we omit the relevant Frobenius twists to simplify
the notation. We now compute, using these formulae, and leaving out
straightforward verifications:

\textbf{1. $V^{-1}(0)=D_{0}(n,m).$}

\textbf{2. }Let $0\le i\le r.$ One proves inductively that
\[
F^{2i}V^{-1}(0)=D_{0}(im-(i-1)n,\,(i+1)m-in)
\]
\[
F^{2i+1}V^{-1}(0)=D_{0}((i+1)m-in,\,(i+1)m-in).
\]

\textbf{3.} Let $1\le j\le r.$ Using induction on $j$ one shows
\[
V^{-2j+1}F^{2r+1}V^{-1}(0)=D_{0}(jn-(j-1)m,\,(r+3-j)m-(r+1-j)n)
\]
\[
V^{-2j}F^{2r+1}V^{-1}(0)=D_{0}((r+2-j)m-(r-j)n,\,jn-(j-1)m).
\]
The assumption that $r/(r+1)<m/n\le(r+1)/(r+2)$ is used repeatedly
in these computations. Putting $j=r$ proves the Lemma.
\end{proof}

\subsection{A conjecture of Andr\'e-Oort type}

Given a foliation in a real manifold, the celebrated theorem of Frobenius
says that integral subvarieties exist, and are unique, in sufficiently
small neighborhoods of any given point. Working in the algebraic category,
in characteristic $p$, one has to impose, in addition to the integrability
condition, also being closed under the $p$-power operation. Integral
subvarieties then exist in \emph{formal} neighborhoods, but are far
from being unique. For that purpose Ekedahl introduced in {[}Ek{]}
the notion of height $h$ foliations for any $h\ge1,$ a notion that
we do not discuss here, as our height 1 foliation does not seem to
extend to higher height foliations. Nor does the foliation lift to
characteristic 0 in any natural way; thus, the approach taken by Miyaoka
in {[}Mi{]} to deal with the same problem does not apply here.

Despite this lack of formal uniqueness, the global nature of our foliation
imposes a severe restriction on integral subvarieties. Thus, we dare
to make the following conjecture.
\begin{conjecture*}
The only integral subvarieties of the foliation $\mathcal{T}S^{+}$
in $S_{\sharp}$ are Shimura varieties of signature $(m,m)$ or $S_{\fol}.$
\end{conjecture*}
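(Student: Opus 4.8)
The plan is to prove the conjecture by splitting into the case where an irreducible integral subvariety $Y\subset S_{\sharp}$ meets $S^{\ord}$ and the case $Y\subset S^{\no}$. Note first that necessarily $\dim Y=m^{2}=\rk(\mathcal{T}S^{+})$, so $Y$ is everywhere a (formal) integral manifold of the \emph{full} rank‑$m^{2}$ distribution, and that by Corollary \ref{cor: characterization of TS+} (valid over all of $S_{\sharp}$) integrality is equivalent to: the Gauss--Manin connection restricted to $Y$ preserves $\mathcal{P}_{0}|_{Y}$. Since $\mathcal{P}_{0}\subset H_{dR}^{1}[V]$ carries vanishing $p$-curvature (proof of Proposition \ref{prop:Being a foliation}), Cartier's theorem shows $(\mathcal{P}_{0}|_{Y},\nabla)$ descends under Frobenius; iterating with the rest of the Dieudonn\'e crystal of $\mathcal{A}[p^{\infty}]|_{Y}$ one extracts a rigid structure on $Y$ that is the algebraic trace of $Y$ being a ``leaf.'' Equivalently, in the language of \S\ref{subsec:Foliations-and-inseparable}, $\rho$ restricts on $Y$ to relative Frobenius onto its image $\rho(Y)\subset S_{0}(p)_{et}$. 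The whole point is to play this rigidity against the \emph{global} algebraicity of $Y$, since formal leaves are far from unique.

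Suppose $Y\cap S^{\ord}\ne\emptyset$, so $Y^{\ord}:=Y\cap S^{\ord}$ is open dense in $Y$. By Theorem \ref{thm:compatibility with Moonen} the foliation over $S^{\ord}$ is exactly the distribution of Moonen's canonical formal tori $\widehat{T}_{x}$. I would first show that the Frobenius‑descent statement of the previous paragraph forces the period coordinates (Moonen's generalized Serre--Tate / $3$-cascade coordinates) to restrict on $\widehat{Y}_{x}$, for $x\in Y^{\ord}$, to a homomorphism of formal groups --- i.e.\ $\widehat{Y}_{x}$ is a ``linear'' (formal subtorus‑torsor) subvariety of the canonical type --- thereby \emph{upgrading} mere tangency to $\widehat{T}_{x}$, which by non‑uniqueness would not suffice, to genuine linearity. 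One then invokes a rigidity theorem of Chai--Oort type: a linear subvariety of the ($\mu$‑)ordinary locus of a unitary Shimura variety stable under the prime‑to‑$p$ Hecke correspondences is a Shimura subvariety. Hecke‑stability of the collection of integral subvarieties is automatic, $\mathcal{T}S^{+}$ being canonical; after identifying the signature and allowing for the failure of the Hasse principle, the conclusion is that $Y$ is one of the embedded Shimura subvarieties of signature $(m,m)$, or of an inner form of $U(m,m)$, of the previous subsection.

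Suppose now $Y\subset S_{\sharp}\cap S^{\no}$; the goal is $Y=S_{\fol}$. Since $S_{\fol}$ is the unique minimal EO stratum in $S_{\sharp}$ and has dimension $m^{2}$, every other EO stratum in $S_{\sharp}$ has dimension $>m^{2}$, so it suffices to show $Y$ is an EO stratum. I would do this by a stratum‑by‑stratum infinitesimal analysis generalizing the proof of Theorem \ref{thm:Sfol_integral_submanifold}: at a point $x$ of a stratum $S_{w}\subset S_{\sharp}\cap S^{\no}$ one writes the universal deformation in coordinates adapted to $a_{\Sigma}(w)=n-m$, and combines the characterization $\nabla_{\xi}(\mathcal{P}_{0})\subset\mathcal{P}_{0}$ of $\mathcal{T}S^{+}$ with the fact (Oort) that the canonical filtration of $\mathcal{A}[p]$ exists as a filtration by finite flat subgroup schemes over each EO stratum and specializes correctly, to conclude that along $Y$ the ``horizontal'' deformation parameters vanish; the combinatorics of $(n,m)$-shuffles should then force $S_{w}=S_{\fol}$, i.e.\ $Y$ cannot lie transverse to the foliation inside a larger EO stratum of $S_{\sharp}\cap S^{\no}$. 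When $n<2m$ this step, like Lemma \ref{lem:DM_computation_M}, replaces equalities of sub‑bundles built from iterated $F,V$ by carefully chosen inclusions, using reducedness of $Y$ and tangency along a neighborhood rather than merely to first order.

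The genuine obstacle is the ordinary case: it is an Andr\'e--Oort / Tate‑linearity statement, and the available unconditional results of this kind (for the classical ordinary locus) do not transpose verbatim to the $\mu$‑ordinary locus of a unitary Shimura variety, nor to a foliation that admits no natural lift to higher Ekedahl height or to characteristic $0$. Making Chai's monodromy and rigidity machinery interact with Moonen's cascades --- and, above all, showing that tangency to $\mathcal{T}S^{+}$, a purely characteristic‑$p$ condition with its attendant formal non‑uniqueness, nonetheless pins down a \emph{linear} formal structure --- is where essentially all the difficulty lies.
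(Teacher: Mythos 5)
The statement you are asked about is not a theorem of the paper: it is stated explicitly as a \emph{conjecture} (``we dare to make the following conjecture''), and the authors offer no proof. What they do prove is only one direction, namely that Shimura subvarieties of signature $(m,m)$ and the stratum $S_{\fol}$ \emph{are} integral subvarieties (their Theorems on embedded $U(m,m)$-subvarieties and Theorem \ref{thm:Sfol_integral_submanifold}). Your proposal is therefore a research plan for an open problem, and as you yourself concede in the last paragraph, it does not close the essential gap. Concretely, in the ordinary branch the passage from ``$\widehat{Y}_{x}$ is tangent to $\widehat{T}_{x}$'' to ``$\widehat{Y}_{x}$ is a formal subtorus of $\widehat{T}_{x}$'' is exactly the Tate-linearity problem; vanishing of the $p$-curvature on $\mathcal{P}_{0}$ only gives Frobenius descent of that sub-bundle, which is the height-$1$ datum the paper already has, and the paper stresses that a height-$1$ foliation has wildly non-unique formal leaves (it does not lift to a higher-height foliation in Ekedahl's sense, nor to characteristic $0$). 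Moreover ``Hecke-stability of the collection of integral subvarieties'' is not what Chai--Oort rigidity requires: one needs the individual $Y$ (or its Hecke orbit closure, shown to still be an integral subvariety of the same dimension) to be stable, and the rigidity theorems themselves are established for the ordinary locus, not for the $\mu$-ordinary locus with Moonen's cascades. None of this is supplied.

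The non-ordinary branch also has an unaddressed gap. You reduce to showing that an integral $Y\subset S_{\sharp}\cap S^{\no}$ is an EO stratum, but the paper's argument for Theorem \ref{thm:Sfol_integral_submanifold} only establishes $\mathcal{T}S^{+}|_{S_{\fol}}=\mathcal{T}S_{\fol}$; it does not show that for the other strata $S_{w}\subset S_{\sharp}\cap S^{\no}$ (which exist, e.g.\ the dimension $5,6,6,7$ strata in the $(4,2)$ picture) the restriction $\mathcal{T}S^{+}|_{S_{w}}$ fails to be contained in $\mathcal{T}S_{w}$, nor, even granting that, that an $m^{2}$-dimensional integral subvariety cannot sit inside such an $S_{w}$ as a proper ``leaf'' of the induced distribution. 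Your proposed ``stratum-by-stratum infinitesimal analysis'' would have to prove a genuinely new transversality/non-integrability statement for every $w\neq w_{\fol}$ with $a_{\Sigma}(w)=n-m$, and no such computation is carried out. So the proposal should be read as a plausible strategy with two identified but unresolved obstructions, not as a proof; this is consistent with the statement's status in the paper as a conjecture of Andr\'e--Oort type.
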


\end{document}